\documentclass[12pt,a4paper]{article}
\usepackage[latin1]{inputenc}
\usepackage[T1]{fontenc}
\usepackage{amsmath, amsthm}
\usepackage{amsfonts, bm}
\usepackage{amssymb}
\usepackage{graphicx}
\usepackage[cal=esstix,frak=euler,scr=boondox]{mathalfa}
\usepackage[dvipsnames]{xcolor}
\usepackage[shortlabels]{enumitem}
\usepackage[colorlinks = true, urlcolor = blue, pdfborder= 0 0 0.5, urlbordercolor = blue ]{hyperref}
\textwidth=170mm \textheight=225mm 
\hoffset=0pt \voffset=-48pt \oddsidemargin=-5mm \topmargin = 12pt
\headsep=12pt \headheight=15pt

\usepackage{algpseudocode,algorithm}

\newtheorem{theorem}{Theorem}
\newtheorem{lemma}[theorem]{Lemma}
\newtheorem{proposition}[theorem]{Proposition}
\newtheorem{corollary}[theorem]{Corollary}
\newtheorem{remark}[theorem]{Remark}
\numberwithin{theorem}{section}

\usepackage{caption}
\usepackage[skip=0.5ex]{subcaption}


\newcommand{\nocontentsline}[3]{}
\newcommand{\tocless}[2]{\bgroup\let\addcontentsline=\nocontentsline#1{#2}\egroup}

\newcommand{\C}{\mathbb{C}}
\newcommand{\N}{\mathbb{N}}
\newcommand{\R}{\mathbb{R}}
\newcommand{\idop}{\mathcal{I}}

\renewcommand{\d}{\mathrm{d}}

\renewcommand{\L}{\mathcal{L}}
\renewcommand{\P}{\mathcal{P}}
\DeclareMathOperator{\im}{im}
\DeclareMathOperator{\spann}{span}
\DeclareMathOperator{\sgn}{sign}

\newcommand{\Hd}[2][d,p]{\mathcal{H}_{#2}^{#1}}
\newcommand{\Bd}[2][p,d]{\mathcal{E}_{#2}^{#1}}

\renewcommand{\k}{\mathbf{k}}

\newcommand{\x}{\mathbf{x}}
\newcommand{\z}{\mathbf{z}}
\newcommand{\btheta}{{\boldsymbol{\theta}}}
\newcommand{\inv}{I}

\makeatletter
\def\widebreve{\mathpalette\wide@breve}
\def\wide@breve#1#2{\sbox\z@{$#1#2$}%
     \mathop{\vbox{\m@th\ialign{##\crcr
\kern0.08em\brevefill#1{0.8\wd\z@}\crcr\noalign{\nointerlineskip}%
                    $\hss#1#2\hss$\crcr}}}\limits}
\def\brevefill#1#2{$\m@th\sbox\tw@{$#1($}%
  \hss\resizebox{#2}{\wd\tw@}{\rotatebox[origin=c]{90}{\upshape$\langle$}}\hss$}
\makeatletter

\newcommand\modck{\check m}
\renewcommand\mod{\allowbreak\space\mathord{ \rm mod}\allowbreak\, }

\newcommand*\Let[2]{\State #1 $\gets$ #2}
\algrenewcommand\algorithmicrequire{\textbf{Input:}}
\algrenewcommand\algorithmicensure{\textbf{Output:}}


\algrenewcommand\alglinenumber[1]{ \sf\scriptsize{#1}}




\newcommand\apack{\mathscr A}

\date{ }

\begin{document}
\title{Hausdorff dimension of the Apollonian gasket }

\author{Polina L. Vytnova\thanks{The University of Surrey, {\tt
p.vytnova@surrey.ac.uk}}\, and Caroline L. Wormell\thanks{The University of
Sydney, {\tt caroline.wormell@sydney.edu.au}}}

\maketitle

\abstract{The Apollonian gasket is a well-studied circle packing. Important
properties of the packing, including the distribution of the circle
radii, are governed by its Hausdorff dimension. No closed form is
currently known for the Hausdorff dimension, and its computation is a
special case of a more general and hard problem: effective, rigorous estimates of
dimension of a parabolic limit set.

In this paper we develop an efficient method for solving this
problem  
which allows us to compute the dimension of the gasket to $128$ decimal places and rigorously justify the error
bounds. We expect our approach to generalise 
easily to other parabolic fractals.}

\section{Introduction}


The Apollonian circle packing might be one of the earliest examples of fractals,
with a Hausdorff dimension strictly bigger than its topological dimension of one
and strictly less than the two dimensions of the plane of which it is a subset.
The construction dates back to 
works by  Apollonius of Perga, from 200 B.C., whose name is given to the fractal. 
The theorem attributed to Apollonius states that given~$3$ mutually tangent
circles there are exactly two other circles tangent to all of them. This starts an iterative
process: at each step one takes~$3$ mutually tangent circles from the set of existing
ones and adds two more that are tangent to all of them. The resulting limiting set is now
called the Apollonian circle packing, and we shall denote it by~$\apack$. 
Recently, its statistical properties, such as the asymptotics of the number of 
circles with curvature less than a given number, and number-theoretic properties, 
such as possible curvatures in integer packings have attracted attention of the wide community. 

In~\cite{B73} Boyd gave the first and only rigorous estimate of the
Hausdorff dimension known to date, with an accuracy of~$0.015$:
\[ 1.300197 < d_H < 1.314534. \] 
Since then, there have been a number of non-rigorous attempts to improve this
bounds. These bounds were of varying success:
\begin{itemize}
        \renewcommand{\labelitemi}{---}
	\item Various estimates have been made by counting circles in the Apollonian
        packing, typically accurate only to a few digits~\cite{MH91};
	\item Thomas and Dhar~\cite{TD94} studied the transfer operator of dynamics
        on quartets of tangent circles to obtain an estimate of
        $1.305686729(10)$. This dynamics is parabolic, and had consequently slow
        convergence; 
	\item Later, McMullen~\cite{M98} discretised the transfer operator of the
        full Apollonian circle packing. This had slow convergence, again due to
        parabolicity. He made a famous estimate of the dimension as $1.305688$,
        which turned out to be accurate only to~$5$ decimal places;
	\item More recently, Bai and Finch~\cite{BF18} discretised an induced
        transfer operator on the complex unit circle in terms of functions of
        the form $z^n \bar z^m, n,m \geq 0$. This enabled them to make a
        non-rigorous computation of the dimension to $30$ decimal places. However,
        they noted that there was no obvious way to make this computation
        rigorous. 
\end{itemize}

In the present manuscript we give an efficient and
effective method for computing the Hausdorff dimension of the limit set 
of a conformal parabolic iterated function system. Using the new approach we
give an estimate of the Hausdorff dimension of the Apollonian circle packing
accurate to~$128$ decimal places. 

\begin{theorem}
    \label{t:main}
The Hausdorff dimension of the Apollonian circle packing is 
\begin{align*}
\dim_H (\apack) = 
1.&3056867280\, 4987718464\, 5986206851\, 0408911060\, 2644149646\, \\
&8296446188\,3889969864\, 2050296986\,4545216123\,1505387132\, \\ 
&8079246688\,2421869101\,967305643 \pm 10^{-129}.
%
\end{align*}
\end{theorem}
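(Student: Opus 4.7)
My plan is to apply Bowen's formula to a conformal iterated function system (IFS) whose limit set is $\apack$, and then to solve the resulting pressure equation to the required precision with rigorous error control. First I would fix three Möbius generators $\phi_1,\phi_2,\phi_3$ adapted to the symmetry of a triangular Apollonian gap, so that the limit set of the IFS coincides with $\apack$. These maps have parabolic fixed points at the three tangencies of the base circles, so the naive transfer operator
\[
\mathcal{L}_d f(x) = \sum_{i=1}^3 |\phi_i'(x)|^d\, f(\phi_i(x))
\]
fails to be trace class on any natural space of holomorphic functions, and a direct discretisation (as in McMullen or Thomas--Dhar) converges at most polynomially.

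To cure the parabolicity, I would pass to an induced IFS by deleting invariant neighbourhoods of the three parabolic points. This gives a countably-branched hyperbolic IFS whose inverse branches are the compositions $\phi_{i_1}\phi_{i_2}\cdots\phi_{i_n}$ with $i_{k+1}\neq i_k$. By Mauldin--Urbanski theory, Bowen's formula still applies, so $\dim_H(\apack)$ is characterised as the unique real $d$ for which the induced transfer operator $\widetilde{\mathcal{L}}_d$ has spectral radius $1$; moreover $\widetilde{\mathcal{L}}_d$ acts nuclearly on an appropriate space of functions holomorphic on a complex neighbourhood of the surviving arc, so its leading eigenvalue is well approximated by the leading eigenvalue of a finite-dimensional truncation.

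The concrete computation is then: expand $\widetilde{\mathcal{L}}_d$ in a suitable basis (Chebyshev-type polynomials, or a weighted variant tailored to the parabolic cusps) and truncate to an $N \times N$ matrix; search for $d$ with leading eigenvalue $1$ by bisection or Newton iteration; and carry out all arithmetic in interval arithmetic at several hundred bits of working precision so that rounding alone cannot corrupt $128$ digits. To make this a rigorous proof rather than a numerical experiment, one then needs explicit a posteriori bounds certifying that the computed eigenvalue differs from the true one by less than, say, $10^{-135}$.

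The main obstacle is combining these error estimates into a rigorous envelope of width below $10^{-129}$. There are two sources of error to control analytically: (i) the truncation in word length of the induced branches, whose tail decays only polynomially because of the parabolic fixed points, and (ii) the truncation in the holomorphic function space, controlled by a nuclear-norm estimate that depends delicately on the chosen domain of holomorphy and on the behaviour of the inverse branches near the cusps. Making these estimates sharp enough to certify 128 digits—rather than merely to prove convergence—will demand a careful analytic continuation of the parabolic tail sum together with explicit, computable bounds on the singular values of $\widetilde{\mathcal{L}}_d$. Once both are in hand, the theorem reduces to executing the resulting finite, high-precision interval computation.
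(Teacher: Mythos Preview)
Your high-level strategy matches the paper's exactly: realise $\apack$ as the limit set of a parabolic Moebius IFS, induce to an infinite uniformly contracting system, invoke Mauldin--Urbanski to identify $\dim_H\apack$ as the zero of the pressure, discretise the induced transfer operator in a Chebyshev basis on a Hardy space, and certify everything with interval arithmetic. You have also correctly flagged the decisive obstacle: the tail of the induced sum decays only like $n^{-2s}$, so naive truncation at $N$ terms gives error $\sim N^{1-2s}$, and reaching $10^{-129}$ would need $N\sim 10^{80}$.

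Where your plan is underspecified, and where the paper supplies a concrete idea you have not, is in how to beat this polynomial tail. Your phrase ``analytic continuation of the parabolic tail sum'' is pointing in the right direction but does not name a mechanism. The paper's answer is the Euler--Maclaurin formula: because the summand $\psi(n)=|J_n|^s\varphi(G_n(\cdot))$ extends analytically in the \emph{index} $n$ to a half-plane (indeed to a neighbourhood of $\infty$ on the Riemann sphere), one replaces $\sum_{n\ge N}\psi(n)$ by $\int_N^\infty\psi+\tfrac12\psi(N)+\sum_{l=1}^L \tfrac{B_{2l}}{(2l)!}\psi^{(2l-1)}(N)$ with remainder $O((2\pi(N-\nu))^{-2L})$. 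Choosing $L\sim\pi N$ makes the remainder $e^{-2\pi N}$, so $N,L=O(\log\varepsilon^{-1})$ suffices. The integral and the derivatives are themselves evaluated by discrete Cauchy/Fourier integrals on circles in complex $n$, again with exponentially small error. Without this acceleration the scheme is not computable at the claimed precision, so this is the genuine gap in your proposal.

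On certification, your proposed route via nuclear-norm or singular-value bounds on $\widetilde{\mathcal L}_d$ is different from the paper's and likely harder. The paper does not bound eigenvalue perturbations abstractly; instead it uses a min-max argument exploiting positivity: if $\varphi>0$ and one can verify pointwise on $[-1,1]^2$ that $\mathcal P_K\mathcal A_{s_0}\varphi-\varphi$ lies in a small interval $[\varepsilon^-,\varepsilon^+]$, and if $\|\mathcal A_{s_0}\varphi-\mathcal P_K\mathcal A_{s_0}\varphi\|_\infty$ is bounded (via the Chebyshev projection estimate), then a monotone bound on $\tfrac{d}{ds}\mathcal A_s$ yields an explicit interval for the true dimension. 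This needs only a single rigorous evaluation of $\mathcal A_{s_0}$ on one test function, rather than control of the full spectrum, and is what makes the rigorous step cheap enough to run.
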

The accuracy of this estimate could be easily improved using our algorithm subject to more computer resources and time. 
Note that all circle packings arising from three mutually tangent circles have the same Hausdorff dimension as they are conformally equivalent.

The proof of Theorem~\ref{t:main} is founded on the following ideas. 
\begin{enumerate}
    \item The classical Ruelle--Bowen formula, which relates the Hausdorff
dimension to the maximal eigenvalue of a suitable linear operator acting on a
Hardy space. We rely here on an adaptation by Mauldin and Urbanski~\cite{MU03} of Ruelle's classical
results to infinite iterated function systems.  
\item The linear operator's analyticity improving property, which allows us to
    approximate it numerically with operators of finite rank. 
We establish exponential convergence of the {\it Chebyshev--Lagrange approximation in multiple dimensions},
and compute the relevant constants explicitly. To our knowledge these results
are new and we state them in Theorem~\ref{t:Cheby} in general form.
This is one of the key results of the paper and Section~\ref{s:Chebyshev} 
is dedicated to it. 
\item 
An adaptation of min-max type estimates, which gives us a way
to obtain lower and upper bounds of the \emph{maximal eigenvalue} of the linear operator
between Hardy spaces using the \emph{eigenvector} of the finite rank
approximation. The idea to apply the min-max estimates to problems of this type appeared
in the work of the first author with M.~Pollicott~\cite{PV22}. However, the version
we use here, presented in Section \ref{s:minmax}, is a modification that
requires fewer computational resources. 
We state it in Theorem~\ref{t:PositiveOperator2} as a general result applicable
to other problems.   
\item In order to apply the min-max estimate in practice, it is necessary to be
    able to evaluate the linear operator applied to a given function with 
    arbitrary precision. This is done using the classical Euler--Maclaurin formula, building on \cite{W21}. 
    In Section~\ref{s:pointapprox} we show that error terms arising in the
    Euler--Maclaurin formula
    can be effectively and efficiently estimated in general setting of
    contracting infinite iterated function systems.
    This is one of the key results of the paper that we state in 
    Theorem~\ref{t:EMgeneric}. We give formulae that allow one to obtain estimates 
    for all constants involved in Theorem~\ref{t:PointwiseEvaluation}.
\end{enumerate} 
 In Section~\ref{s:aprioribdd} we apply the general strategy outlined above to compute the
 Hausdorff dimension of the Apollonian gasket. In particular, we evaluate
 explicitly all necessary constants involved. We give an outline of the
 algorithm used by our computer program and provide pseudocode in Section~\ref{s:algorithm}.

\section{Functional analysis setup}
In order to implement our strategy, we need to realise the Apollonian
gasket as a limit set of a contracting iterated function system. We also need to
find suitable Hardy spaces and an appropriate linear operator. 

Following Mauldin and Urbanski~\cite{MU98}, we
shall define the Apollonian gasket as the limit set of a parabolic iterated function scheme as
follows. We begin with the closed unit disk $\mathbb D = \{ z \in \mathbb C \mid
|z| \le 1 \}$ and a Moebius transformation
$$
A_\bullet \colon \mathbb D \to \mathbb D \qquad 
A_\bullet
\colon  z \to \frac{(\sqrt3-1)z +1}{-z + \sqrt 3 +1}.
$$
We shall denote by $\mathtt{e}^{\pm 2\pi /3}$ the rotations
by $\frac{-2 \pi}3$ and $\frac{2\pi}3$, i.e. 
$$
\mathtt{e}^{-2\pi /3} \colon z \mapsto e^{\frac{-2\pi}3 i} \cdot z, \qquad 
\mathtt{e}^{2\pi /3} \colon z \mapsto e^{\frac{2\pi}3 i} \cdot z.
$$
Now we may consider the iterated function system of~$3$ maps (see
Figure~\ref{fig:scheme})
\begin{equation}
    \label{eq:apIFS}
\left\{A_1 : = A_\bullet, \,
A_2: = \mathtt{e}^{2\pi/3} \circ A_\bullet, \mbox{ and } A_3: = \mathtt{e}^{-2\pi/3} \circ
A_\bullet \right\}.
\end{equation}
The limit set is then defined by 
$$
\apack = \bigcap_n \bigcup_{|\sigma| = n} A_\sigma (\mathbb D),
$$
where the union is taken over all sequences of length~$n$ of symbols $1,2,3$
and 
$$
A_\sigma := A_{\sigma_1} \circ A_{\sigma_2} \circ \ldots \circ A_{\sigma_n}.
$$

\begin{figure}[h!]
    \centerline{\includegraphics{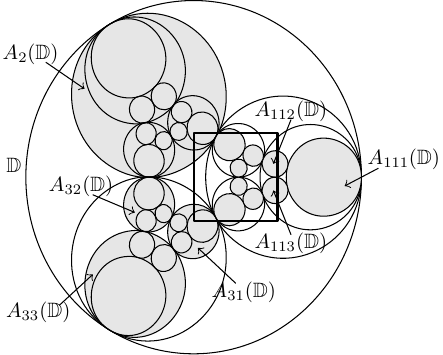} \qquad \qquad
\includegraphics{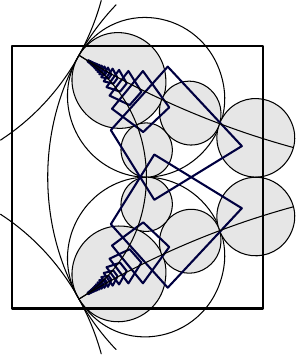}}
    \caption{Left: The unit disk and its images under iterations of $A_1$,
    $A_2$, $A_3$. The square $\square = \{ (x,y) \mid 0 \le x \le 0.5,\, -0.25
    \le y \le 0.25\}$, where the induced system is defined is shown.
    Right: The square~$\square$ and its images under the maps $f_n$ for $n = 1, \ldots,  20$.} 
    \label{fig:scheme}
\end{figure}

Although the system is very simple, it is not contracting. 
Indeed, it is easy to see that there is a neutral fixed point of $A_\bullet$ at~$z = 1$, as $A_\bullet^\prime(1) = 1$. 
It is a classic approach to use inducing to replace a finite parabolic system 
by a uniformly contracting system consisting of countably
many maps whose limit set has the same Hausdorff dimension. 

\subsection{Inducing}
Consider the square $\square:=\{ z \in \mathbb C \mid  0< \Re(z) < 0.5, \, |\Im(z)| < 0.25
\}$. We claim that the Hausdorff dimension of the circle packing is equal to the
Hausdorff dimension of the part of the packing contained in $\bigl(A_{12}(\mathbb
D) \cup A_{13}(\mathbb D) \bigr) \cap \square $. In other words, we claim 
$$
\dim_H \apack = \dim_H \left(\apack \cap \square \cap \left(A_{12}(\mathbb
D) \cup A_{13}(\mathbb D) \right) \right). 
$$
Indeed, the entire gasket can be obtained as a union of images of the part of
$\apack$ contained within $\square \cap \left(A_{12}(\mathbb D) \cup 
A_{13}(\mathbb D) \right)$ by Moebius transformations, which are bi-Lipshitz and
thus preserve the Hausdorff dimension. 

We would like to define the infinite iterated function system by 
\begin{equation}
f^\pm_n \colon \square \to \square,  \qquad
f_{n}^- = A_\bullet \circ \mathtt{e}^{-2\pi /3} \circ A_\bullet^{n}, \quad
f_{n}^+ = A_\bullet \circ \mathtt{e}^{2\pi /3} \circ A_\bullet^{n}, 
\quad n \in \mathbb N\cup\{0\}. 
\label{eq:cIFS}
\end{equation}
Since $|A_\bullet^\prime|<1$ on the image $\mathtt{e}^{\pm2\pi /3} A_\bullet(\mathbb D)$, 
the maps $f_n$ are uniformly contracting.
The limit set is $\apack^\prime = \cap_n \cup_{\pm,|\sigma|=n} f^\pm_\sigma
(\square)$, where the union is taken across all sequences of natural numbers of
length $n$ for all possible combinations of~$-$ and~$+$ indices.

We claim that the two limit sets 
have the same Hausdorff dimension. 
\begin{lemma}
    In notation and under the hypothesis introduced above, 
$$
\dim_H \apack = \dim_H  \apack^\prime. 
$$
\end{lemma}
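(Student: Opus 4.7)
The paragraph immediately preceding the lemma reduces the statement to proving
\[
\dim_H \bigl(\apack\cap\square\cap(A_{12}(\mathbb D)\cup A_{13}(\mathbb D))\bigr) \;=\;\dim_H \apack'.
\]
My plan is to show that, modulo a countable exceptional set, $\apack'$ coincides with the set on the left; since countable sets have Hausdorff dimension zero and $\dim_H(X\cup Y)=\max(\dim_H X,\dim_H Y)$, the two dimensions then agree. The exceptional set will consist of the countable orbit of the parabolic fixed point $z=1$ under finite compositions of the $A_i$.

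The crucial algebraic observation is that compositions of the induced maps unfold into compositions of the original generators. Since $A_\bullet=A_1$ and $\mathtt e^{\pm 2\pi/3}\circ A_\bullet\in\{A_2,A_3\}$ according to the sign, induction on $k$ yields
\begin{equation*}
f_{n_1}^{\epsilon_1}\circ\cdots\circ f_{n_k}^{\epsilon_k}
\;=\;A_1\circ A_{\epsilon_1}\circ A_1^{n_1}\circ A_{\epsilon_2}\circ A_1^{n_2}\circ\cdots\circ A_{\epsilon_{k-1}}\circ A_1^{n_{k-1}}\circ\bigl(\mathtt e^{\epsilon_k 2\pi/3}\circ A_1^{n_k}\bigr),
\end{equation*}
with $A_{\epsilon_i}\in\{A_2,A_3\}$ determined by the sign of $\epsilon_i$, and with the trailing parenthesised factor sending $\square$ into $\mathbb D$. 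Therefore $f_{n_1}^{\epsilon_1}\circ\cdots\circ f_{n_k}^{\epsilon_k}(\square)$ is contained in the $A$-cylinder with prefix $1\,\epsilon_1\,1^{n_1}\cdots\epsilon_{k-1}\,1^{n_{k-1}}$; taking the nested intersection over $k$ and using that each $f_n^\pm$ is a uniform contraction of $\square$ into itself, every $z\in\apack'$ is the point of $\apack$ indexed by the infinite $A$-word $1\,\epsilon_1\,1^{n_1}\,\epsilon_2\,1^{n_2}\cdots$. Moreover, for $k\geq 2$ each such cylinder already sits inside $A_{12}(\mathbb D)\cup A_{13}(\mathbb D)$ (its coding begins with $1\,\epsilon_1$); combined with $\apack'\subseteq\square$, this gives $\apack'\subseteq\apack\cap\square\cap(A_{12}(\mathbb D)\cup A_{13}(\mathbb D))$.

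For the reverse inclusion up to a countable set, I take any $z$ in the right-hand set with $A$-coding $\sigma\in\{1,2,3\}^{\mathbb N}$; by hypothesis $\sigma_1=1$ and $\sigma_2\in\{2,3\}$. If $\sigma$ has only finitely many symbols in $\{2,3\}$ then $z=A_\tau(1)$ for some finite word $\tau$, and such points form a countable set $C$. Otherwise $\sigma=1\,\epsilon_1\,1^{n_1}\,\epsilon_2\,1^{n_2}\cdots$ with infinitely many $\epsilon_j\in\{2,3\}$, and the identity above exhibits $z$ as the limit of the corresponding induced cylinders, placing $z\in\apack'$. Hence $\apack\cap\square\cap(A_{12}(\mathbb D)\cup A_{13}(\mathbb D))=\apack'\cup C'$ for a countable $C'\subseteq C$, and the dimension equality follows at once. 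The step I expect to need the most care is this symbolic bookkeeping: ensuring that every point in the right-hand set genuinely admits an $A$-coding starting with $1\,\epsilon$ for some $\epsilon\in\{2,3\}$, and that the translation to an infinite induced coding is well-defined on the complement of the exceptional set; the algebraic identity and the final dimension conclusion are routine.
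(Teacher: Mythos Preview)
Your approach is essentially the same as the paper's: both rest on the algebraic identity that rewrites a finite composition of the $A_i$ (with first symbol $1$ and at least one symbol in $\{2,3\}$) as a composition of the induced maps $f_{n}^\pm$, and vice versa. The paper simply asserts that it suffices to show $\apack\cap(A_{12}(\mathbb D)\cup A_{13}(\mathbb D))\subseteq\apack'$, carries out the same regrouping of $A_\bullet$'s and rotations that you wrote down, and declares the result.

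You are in fact more careful than the paper on two points. First, you argue both inclusions, whereas the paper only spells out one (tacitly treating $\apack'\subseteq\apack$ as evident from the construction). Second, you explicitly isolate the countable exceptional set of points whose $A$-coding ends in an infinite tail of $1$'s; the paper's rewriting tacitly assumes the coding contains infinitely many symbols from $\{2,3\}$, so that the induced word is infinite, but does not comment on the degenerate case. Your handling of this via $\dim_H(X\cup C)=\dim_H X$ for countable $C$ is the clean way to close that gap.
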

\begin{proof}
    It is sufficient to show that for any $x \in \apack \cap (A_{12}(\mathbb D) \cup
    A_{13}(\mathbb D))$ we have $x \in \apack^\prime$.  
    Indeed, the assumption implies that there exists a sequence $\sigma =
    1,\sigma_2,\sigma_3,\ldots$ such that $x = \lim_{n\to \infty}
    A_{\underline{\sigma}_n}(y)$ for some $y \in \square$. Expanding  $A_{\underline{\sigma}_n}$ using
    formulae~\eqref{eq:apIFS} for $A_{1,2,3}$ we get 
    \begin{align*}
    A_{\underline{\sigma}_n} &= A_{\bullet} \mathtt{e}^{\pm 2\pi i/3} A_\bullet
    A_{\bullet}^{j_1} \mathtt{e}^{\pm 2\pi /3} A_\bullet 
    A_{\bullet}^{j_2} \mathtt{e}^{\pm 2\pi /3} \ldots A_{\bullet}^{j_{n-1}}
    \mathtt{e}^{\pm 2\pi /3}
    A_\bullet A_{\bullet}^{j_n}  \\ &=
    A_{\bullet} \mathtt{e}^{\pm 2\pi /3} A_{\bullet}^{j_1}  A_\bullet
    \mathtt{e}^{\pm 2\pi /3} 
    A_{\bullet}^{j_2} A_\bullet \mathtt{e}^{\pm 2\pi /3} \ldots
    A_{\bullet}^{j_{n-1}} A_\bullet \mathtt{e}^{\pm 2\pi /3}
    A_\bullet^{j_n} A_{\bullet} \\ &= 
    f_{j_1}^\pm \circ f_{j_2}^\pm \circ \ldots \circ f_{j_{n-1}}^\pm \circ
    f_{j_n+1}^\pm, \quad j_n \ge 0.
    \end{align*}
    Where the sign in $f_{j_k}^\pm$ depends on the sign in $\mathtt{e}^{\pm 2\pi /3}$ that precedes~$A_\bullet^{j_k}$.
    Thus there exist a sequence of indices $\sigma^\prime$ such that $x = \lim_{n\to \infty}
    f_{\underline{\sigma^\prime}_n}(y) \in \apack^\prime$. 
\end{proof}

It is very easy to evaluate the maps $f_n$ quickly and with arbitrary precision. 
The Moebius transformation $A_\bullet$ corresponds to the matrix 
$\bigl(\begin{smallmatrix}     \sqrt3 - 1 & 1  \\     -1 & \sqrt3 + 1
\end{smallmatrix}\bigr)$ with the Jordan normal form decomposition
$$
A_\bullet = \begin{pmatrix}     \sqrt3 - 1 & 1  \\     -1 & \sqrt3 + 1 \end{pmatrix} = 
    \begin{pmatrix}     1 & 1 \\ 1 & 0 \end{pmatrix} \cdot
    \begin{pmatrix}     \sqrt3 & -1 \\ 0 & \sqrt3 \end{pmatrix} \cdot
    \begin{pmatrix}     0 & 1 \\ 1 & -1 \end{pmatrix}.
$$
Hence 
$$
A_\bullet^n = 
    \begin{pmatrix}     1 & 1 \\ 1 & 0 \end{pmatrix} \cdot
        \begin{pmatrix}     \sqrt3 & -n  \\ 0 & \sqrt3 \end{pmatrix} \cdot
    \begin{pmatrix}     0 & 1 \\ 1 & -1 \end{pmatrix},
$$
and in particular
$$
A_\bullet^n \colon z \mapsto \frac{(\sqrt3-n)z+n}{-nz+n + \sqrt3}.
$$

However, in order to define Hardy spaces for the transfer operator to act on, we need to work with contracting
real maps, meaning we need to replace the complex line~$\mathbb C$ with~$\mathbb
R^2$. 
Below we describe one possible construction, that in particular takes care of \emph{complex}
values of~$n$, which will be essential later. 

Following~\cite{H89} the complex conjugate of~$f$ may be defined by $\bar f(z)
:= \inv f \inv $, where $\inv \kern-4pt : \! z \mapsto \overline z$ is the fundamental involution. 
Then $\bar f$ is analytic on the domain $\{ I( z) \mid f \mbox{ is analytic at
} z \}$.
We now can convert our maps $f_n^\pm: \C \to \C$ into real-analytic maps of two real variables as 
\begin{equation}
    \label{eq:Rmaps}
{\widehat G}_n^{\pm}\colon
(x,y) \mapsto \left(\tfrac{1}{2}(f^\pm_n(x+iy) + \overline{ f^\pm_n}(x-iy)),
\tfrac{1}{2i}(f^\pm_n(x+iy) - \overline{ f^\pm_n}(x-iy))\right).
\end{equation}
Similarly, the derivatives of 
the maps $f^\pm_n: \C \to \C$ become real-analytic functions $\R^2 \to \R$ given by
\begin{equation}
	\label{eq:RmapsJac}
    {\widehat J}^\pm \colon	(x, y)  \mapsto \sqrt{(f_n^\pm)'(x+iy) 
    (\overline{f_n^\pm})'(x-iy)}, \quad x, y \in \mathbb R. 
\end{equation}
Since $f_n^\pm$ are Moebius maps, $(f_n^\pm)^\prime$ don't vanish and therefore the positive
branches can be consistently chosen across complex domains. 

Conjugating with the mapping $S \colon [-1,1]^2 \to \square$, $S \colon z
\mapsto \frac{z+1}4$ we get
an infinite contracting iterated function system $G_n^\pm \colon [-1,1]^2 \to [-1,1]^2$
given by $G_n^{\pm} := S^{-1} \widehat G_n^{\pm} S$ with Jacobian $J_n^\pm := \widehat J_n^{\pm} S$. Since~$S$  is
bi-Lipshitz, the limit set of $\{G_n^{\pm}\}_{n \in \mathbb N}$ has the same
Hausdorff dimension as~$\apack$. Therefore we set ourselves to the problem
of computing the Hausdorff dimension of the limit set of~$\{G_n^{\pm}\}_{n \in
\mathbb N}$. 

Going forward we identify $G_n^\pm$ and $J_n^\pm$ with their natural
extensions to~$\mathbb C^2$, and, furthermore, take natural extensions to
complex values of~$n$.

\subsection{Transfer operators and Ruelle--Bowen formula}
Recall that to any $r > 0$ we may associate the Bernstein ellipse with 
semi-axes $\cosh r$ and $\sinh r$ defined by
$$
	\mathcal{E}_r := \left\{ z \in \C \, \Bigl| \, \frac{(\Re z)^2}{\cosh^2 r} + \frac{ (\Im
  z) ^2}{\sinh^2 r} \leq 1 \right\}. 
$$
These ellipses contain the real interval $[-1,1]$, and play an important role in
bounding the errors of the Chebyshev approximation that we will rely upon in our numerical estimates. 
As our dynamics act over two real dimensions, we shall need a $\mathbb C^2$ analogue. Observe that we can rewrite the ellipse as 
$\mathcal E_r = \cos(\R + i[-r,r]) \supseteq [-1,1]$. This generalises to polydisks\footnote{Here and below by
$\cos(z_1,z_2, \ldots, z_d)$ we understand
$(\cos(z_1),\cos(z_2),\ldots,\cos(z_d))$; and for a set~$A$ we use shorthand
notation $\cos(A) = \{ \cos(x) \mid x \in A \mid \}$.}
\begin{equation}
  \label{eq:el2}
  \mathcal E_r^{2,2} := \cos \left(\R^2 + i \cdot \{ x \in \mathbb R^2 \mid \|x\| < r \}  \right) \subset \C^2.
\end{equation}
It is known~\cite{R69} that the set of bounded analytic functions on $\mathcal
E_r^{2,2}$ is a Banach space when considered with the supremum norm. 
Such a space of complex functions is commonly referred to as a Hardy space.

We will consider a closed subspace consisting of functions that map the square
$[-1,1]^2 \subset \mathbb R^2$ to the real line. More precisely, we want to
consider\footnote{Here and below $\overline{\mathcal E_r^{2,2} }$ stands for the
closure of the ellipsoid $\mathcal E_r^{2,2}$.}
\begin{equation}
\mathcal H_r^{2,2} = \{ \varphi \colon \mathcal E_r^{2,2} \to \mathbb C \mid 
\varphi \in C^\omega(\mathcal E_r^{2,2}) \cap C^0 (\overline{\mathcal
E_r^{2,2} }), \ \varphi([-1,1]^2) \subseteq \R \}.
\end{equation}
Since the maps $G_n^{\pm}$ are contracting, for any given $n\in\N$ there exist $ r < R$ such that
$G_n^{\pm} (\mathcal E_R^{2,2}) \subset \mathcal E_r^{2,2}$. Moreover, we show
in Section~\ref{s:aprioribdd} that the choice $R = R_A = 1.4$ and $r = r_A = 0.9$ works for all $n\in \mathbb N$. 

One key stipulation on the Jacobian is that, in order to use the discrete Taylor series approximation to estimate the Euler--Maclaurin
integral (see Proposition~\ref{p:EMIntegralError}), we will need control over its analytic extension at $n = \infty$: 

\begin{proposition}
  \label{p:AnalyticAtInfinity}
	The functions $n \mapsto n^{2s} J^{\pm}_n(z_1,z_2)^{2s}, n \mapsto
    G_n^\pm(z_1,z_2)$ are analytic and bounded for complex $n$ as $n \to \infty$, uniformly for
    $(z_1,z_2) \in \overline{\mathcal E_r^{2,2}}$, provided~$r$ is chosen so
    that $|z_1 +  iz_2 - 3| > \varepsilon  > 0$ for all $(z_1,z_2) \in
    \overline{\mathcal E_r^{2,2}}$ for some $\varepsilon>0$. 
\end{proposition}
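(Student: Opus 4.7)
The plan is to substitute $w = 1/n$ and verify analyticity and boundedness of the resulting expressions in $w$ near $w = 0$, uniformly in $(z_1,z_2)$. The crucial observation is that $n$ appears in the formulas for $f_n^\pm$ and $(f_n^\pm)'$ only through the single factor $n(1-z) + \sqrt 3$ coming from $A_\bullet^n$, and this factor, after the coordinate change $S$, becomes $n(3-u)/4 + \sqrt 3$ where $u = z_1 + iz_2$.

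From the explicit formula $A_\bullet^n(z) = ((\sqrt 3 - n)z + n)/(n(1-z) + \sqrt 3)$ recorded in the text, together with the fact that $A_\bullet$ has real coefficients (so $\overline{A_\bullet^n} = A_\bullet^n$) while $\overline{\mathtt{e}^{\pm 2\pi/3}} = \mathtt{e}^{\mp 2\pi /3}$, a direct computation gives $A_\bullet^n \circ S(u) = \alpha/\beta$ with $\alpha = (\sqrt 3 - n)u + 3n + \sqrt 3$ and $\beta = n(3-u) + 4\sqrt 3$. Composing further with $A_\bullet \circ \mathtt{e}^{\pm 2\pi/3}$ yields another Moebius expression whose numerator and denominator are each polynomials of degree one in $n$, and hence whose ratio is bounded as $n \to \infty$. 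An analogous expression with $v = z_1 - iz_2$ in place of $u$ governs the Schwarz reflection $\overline{f_n^\pm}$. Dividing numerator and denominator by $n$ and setting $w = 1/n$, one obtains an expression for $G_n^\pm$ that is analytic at $w = 0$ provided $(3-u)/4 + \sqrt 3 w$ and $(3-v)/4 + \sqrt 3 w$ are uniformly bounded away from zero. The hypothesis $|z_1 + iz_2 - 3| > \varepsilon$ handles the first; the symmetry $z_2 \mapsto -z_2$ of $\overline{\mathcal E_r^{2,2}}$ transfers the condition to $v$, handling the second.

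For the weight, $(A_\bullet^n)'(z) = 3/(n(1-z) + \sqrt 3)^2$ by a direct determinant computation. Writing $(J_n^\pm)^2 = (f_n^\pm)'(u_0)\,(\overline{f_n^\pm})'(v_0)$ with $u_0 = S(u)$ and $v_0 = S(v)$, the product of the two $(A_\bullet^n)'$ contributions scales like $n^{-4}$, while the outer $A_\bullet'$ factors tend to the nonzero constants $A_\bullet'(\mathtt{e}^{\pm 2\pi/3})$ since $A_\bullet^n(u_0) \to 1$ as $n \to \infty$. Hence $n^4 (J_n^\pm)^2$ extends to an analytic function of $w$, uniformly bounded and bounded away from zero on $\overline{\mathcal E_r^{2,2}}$ for $|w|$ small; taking the single-valued $s$-th power yields $n^{4s} (J_n^\pm)^{2s} = (n^4 (J_n^\pm)^2)^s$ analytic and bounded. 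The factor $n^{-2s}$, taken in the principal branch, is analytic and bounded in any sector at $\infty$ avoiding the negative real axis, so multiplying gives the asserted analyticity and boundedness of $n^{2s} (J_n^\pm)^{2s}$.

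The chief care needed lies in the bookkeeping of compositions through $S$, $\mathtt{e}^{\pm 2\pi/3}$ and the Schwarz reflection, and in verifying that $n^4 (J_n^\pm)^2$ has no accidental zero on $\overline{\mathcal E_r^{2,2}}$ in the limit $w \to 0$, so that its $s$-th power admits a single-valued branch. The uniformity in $(z_1,z_2)$ is then a consequence of compactness of $\overline{\mathcal E_r^{2,2}}$ together with the strict separation $|z_1 + iz_2 - 3| \geq \varepsilon$ built into the hypothesis.
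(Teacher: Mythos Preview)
Your approach is essentially the paper's: both compute the Moebius compositions directly, locate the limiting pole at $z_1 + iz_2 = 3$, and verify that $n^2 J_n^\pm$ has a finite nonzero limit as $n \to \infty$ (the paper by assembling the matrix of $Sf_n^\pm S^{-1}$ and computing $\lim_{n\to\infty} n\sqrt{(Sf_n^\pm S^{-1})'}$ explicitly, you by tracking $n$ through the factor $n(3-u)+4\sqrt 3$ from $A_\bullet^n \circ S$ and applying the chain rule).

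There is, however, a genuine gap in your last step, though it originates in a typo in the proposition as stated. You multiply by $n^{-2s}$, which for $2s \notin \mathbb Z$ has a branch point at $n=\infty$ and is \emph{not} analytic in any full neighbourhood of infinity; sectorial analyticity does not suffice for the downstream application (Proposition~\ref{p:EMIntegralError}), which needs $\phi(w) = w^{-2s}\psi(1/w)$ analytic on a full disk about $w=0$. The paper's own proof in fact establishes analyticity of $n^{2s} J_n^\pm(z_1,z_2)^{s}$ (exponent $s$, not $2s$), and your argument already delivers this: since $n^2 J_n^\pm$ is analytic and nonvanishing near $w=0$, its $s$-th power is single-valued and analytic there. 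Simply drop the final sentence invoking $n^{-2s}$.
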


\begin{proof}
    Going back to the definition, $\widehat G_n^\pm$ correspond to the Moebius
    mapping with the matrix 
    \begin{equation}
    \begin{pmatrix}     \sqrt3 -1 & 1 \\ -1 & \sqrt 3+1  
    \end{pmatrix} \cdot
    \begin{pmatrix}    (\sqrt3-n) e^{\pm i2\pi/3} & n e^{\pm i2\pi/3} \\ -n  & n +
        \sqrt3 \end{pmatrix} \\
 \label{eq:fnmat}
    \end{equation}
    To obtain matrices corresponding to $G_n^{\pm}$ we need to conjugate with $S(z) = (z+1)/4$, giving
    \begin{equation}
       S f_n^\pm S^{-1} \ \Leftrightarrow \  \begin{pmatrix} 1 & -\frac14 \\ 0 & \tfrac14    \end{pmatrix} \cdot
    \begin{pmatrix}     \sqrt3 -1 & 1 \\ -1 & \sqrt 3+1     \end{pmatrix} \cdot
    \begin{pmatrix}    (\sqrt3-n) e^{\pm i2\pi/3} & n e^{\pm i2\pi/3} \\ -n  & n +  \sqrt3 \end{pmatrix} \cdot
    \begin{pmatrix} 1 & 1 \\ 0 & 4    \end{pmatrix}. 
\label{eq:fnsmat}
    \end{equation}
    Using the formula~\eqref{eq:Rmaps} we conclude that $G_n^{\pm}(z_1,z_2)$ is analytic
    on $\overline{\mathcal E_R^{2,2}}$ provided that for all  $(z_1,z_2) \in
    \overline{\mathcal E_r^{2,2}}$ the compositions $S {f_n^\pm} S^{-1}$ and $S
    \overline{f_n^\pm} S^{-1}$ are
    analytic in an $\varepsilon$-neighbourhoods of $z_1 + i z_2$ and $z_1 -
    iz_2$, respectively. Since $\mathcal E_R^{2,2}$ is symmetric in
    $z_2$, it is enough to consider $z_1+iz_2 \in \mathcal E_r^{2,2}$.

    Now, a Moebius transformation given by matrix $(\begin{smallmatrix} 
    	a & b \\ c & d
    \end{smallmatrix})$ is analytic away from $-\frac{c}d$, so we can deduce
    that $(S f_n^\pm S^{-1})(z_1+ iz_2)$ are analytic when $z_1 + iz_2$ are
    bounded uniformly away from 
  \begin{align*}
   \frac{3 \left(2 + \sqrt 3 \pm i \right)  n + 9 + 8 \sqrt
  3 \pm i }{\left( 2 + \sqrt 3 \pm i \right)  n-  1 \mp \sqrt3 i}.
  \end{align*}
This will occur for all complex $|n|$ large enough if all combinations $z_1 + i z_2 \in \C$ are uniformly bounded away from from  
\[ \lim_{n\to\infty} \frac{3 \left(2 + \sqrt 3 \pm i \right)  n + 9 + 8 \sqrt 3
\pm i }{\left( 2 + \sqrt 3 \pm i \right)  n-  1 \mp \sqrt3 i }= 3. \] 
   At the same time the formula~\eqref{eq:RmapsJac} for Jacobians gives that
    \begin{align*}
        n^{2s} J_n^\pm(z_1,z_2)^{s} = \left(n \sqrt{(S f_{n}^\pm S^{-1})'(z_1+
        iz_2)} \cdot n
        \sqrt{(IS f_{n}^\pm S^{-1}I)'(z_1 - iz_2)}\right)^s.
 \end{align*}
This will be bounded and analytic in $n$ in a neighbourhood of infinity provided
that 
\begin{enumerate}
    \item[(a)] $n \mapsto f_n^\pm( z_1 + i z_2 )$ are analytic at infinity; and 
    \item[(b)]  $n \sqrt{(S f_{n}^\pm S^{-1})'(z_1 + iz_2)}$ is bounded both from above and away from zero as $n\to\infty$. 
\end{enumerate}
It follows from the formula for $f_n^\pm$ that (a) holds; since $A_\bullet^n$ is
bounded and therefore analytic at infinity. To confirm (b) one can
find from~\eqref{eq:fnsmat} by straightforward computation that
\[ \lim_{n\to\infty} n \sqrt{(S f_{n}^\pm S^{-1})'(z_1 + i z_2)} = \frac{3
e^{\pm i\pi/3}}{(2 + \sqrt3 \pm i)(z_1 + i z_2-3)}. \]

It follows by continuity of the maps and compactness of the closed ellipse $\overline{\mathcal E_r^{2,2}}$
that the convergence is uniform in $z_1$ and $z_2$. 
\end{proof}

We next want to define a family of Apollonian transfer operators 
$\mathcal A_s \colon \mathcal H_r^{2,2} \to \mathcal H_R^{2,2}$ by 
\begin{equation}
\label{eq:ApTrOp}
[\mathcal A_s \varphi](z_1,z_2) = \sum_n \left(J_n^+(z_1,z_2)^s \cdot
\varphi (G_n^+(z_1,z_2)) + J_n^-(z_1,z_2)^s \cdot \varphi (G_n^-(z_1,z_2))
\right).
\end{equation}

The following result, connecting the Hausdorff dimension to the spectral radius
of the transfer operator is fundamental for the subsequent considerations. 
\begin{proposition}\label{p:SpecRadIsDimension}
    For $s \in (1, 2)$ the spectral radius $\rho(s):=\rho_{\mathcal H}(\mathcal A_s)$
    of $\mathcal A_s$ acting on $\Hd[2,2]{R_A}$ is a
continuous monotone decreasing function and is equal to the maximal eigenvalue.
Furthermore, $\rho(s) = 1$ when $s = \dim_H \apack$.
\end{proposition}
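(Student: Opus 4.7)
The plan is to combine three standard ingredients: nuclearity of $\mathcal A_s$ (coming from analytic smoothing), the Krein--Rutman theorem applied to a positive operator, and Bowen's formula for infinite conformal iterated function systems due to Mauldin and Urbanski~\cite{MU03}.

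First I would show that $\mathcal A_s$ is a well-defined bounded (in fact nuclear) operator from $\Hd[2,2]{R_A}$ to itself for $s \in (1,2)$. Because the contractions $G_n^\pm$ send $\overline{\mathcal E_{R_A}^{2,2}}$ strictly inside $\overline{\mathcal E_{r_A}^{2,2}}$ with $r_A<R_A$, each composition operator $\varphi\mapsto \varphi\circ G_n^\pm$ is nuclear (with a summable sequence of singular values decaying geometrically in the conformal modulus). The Jacobian weights $J_n^\pm$ give bounded multipliers, and the asymptotics from Proposition~\ref{p:AnalyticAtInfinity} show $|J_n^\pm(z_1,z_2)^s|=O(n^{-2s})$, which is summable for $s>1/2$, so the series~\eqref{eq:ApTrOp} converges in the nuclear/trace norm. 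In particular $\mathcal A_s$ is compact and its spectral radius is attained by an eigenvalue.

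Next I would check positivity. On $[-1,1]^2$ each Jacobian $J_n^\pm$ is strictly positive (the chosen branch in~\eqref{eq:RmapsJac}), and the images $G_n^\pm([-1,1]^2)$ cover a dense subset of the limit set. Hence $\mathcal A_s$ preserves (and, under iteration, strictly contracts into the interior of) the cone of functions in $\Hd[2,2]{R_A}$ that are non-negative on $[-1,1]^2$. A standard Krein--Rutman / Perron--Frobenius argument then gives that $\rho(s)$ is a simple eigenvalue with a strictly positive analytic eigenfunction $h_s$, and that no other element of the spectrum has the same modulus. Continuity of $s\mapsto \rho(s)$ follows from the norm-analytic dependence $s \mapsto \mathcal A_s$ combined with the simplicity of the leading eigenvalue. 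Monotone decrease is a consequence of the pointwise inequality $J_n^\pm(z)^{s'}<J_n^\pm(z)^s$ for $s'>s$ on $[-1,1]^2$ (since $|J_n^\pm|<1$ there by uniform contraction): applying both operators to the positive Perron eigenfunction and using the variational characterisation $\rho(s) = \lim_k \|\mathcal A_s^k\mathbf 1\|^{1/k}$ in the positive cone gives $\rho(s')\le\rho(s)$, and strict decrease follows from strict positivity.

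The final and most substantive step is the identification $\rho(\dim_H \apack)=1$. For this I would invoke Bowen's formula in the Mauldin--Urbanski framework for infinite conformal IFSs: the Hausdorff dimension of the limit set of $\{G_n^\pm\}$ is the unique $s$ for which the topological pressure vanishes, equivalently, the unique $s$ for which the leading eigenvalue of the natural transfer operator on continuous functions equals one. One needs to verify that the induced system~\eqref{eq:cIFS} satisfies the Mauldin--Urbanski hypotheses (conformality, open set condition for the square~$\square$, bounded distortion, and the summability $\sum_n \|(G_n^\pm)'\|_\infty^s<\infty$ for $s$ in a neighbourhood of the critical value), all of which are either immediate from Moebius geometry or follow from Proposition~\ref{p:AnalyticAtInfinity}. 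Then one must match the spectral radius of their operator with ours: since our positive eigenfunction $h_s$ is analytic it lies inside their Banach space, and conversely, their positive eigenfunction pushed through one step of $\mathcal A_s$ becomes analytic by the smoothing property; uniqueness of the positive eigenfunction up to scalar forces the two leading eigenvalues to coincide. The main obstacle is this last identification of the spectral radii across the two function spaces; everything else is by now standard.
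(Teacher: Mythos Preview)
Your approach is broadly correct and close in spirit to the paper's, but organised differently and with one flawed (though dispensable) step.

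The paper works first on $C^\alpha([-1,1]^2)$, quoting~\cite{MU03} directly for the whole package (spectral gap, monotonicity in~$s$, and Bowen's formula identifying the zero of pressure with $\dim_H\apack$), and then transfers the peripheral spectrum to $\Hd[2,2]{R_A}$ by a general nested--Banach--space lemma~\cite[Lemma~A.1]{BT08}, after establishing compactness on the Hardy space via~\cite[Proposition~5.4]{BJ07}. You instead build the spectral picture directly on the Hardy space (nuclearity plus Krein--Rutman) and only invoke~\cite{MU03} at the end for Bowen's formula. Both routes are legitimate; the paper's is shorter because it outsources more to existing black boxes, while yours makes the positivity structure on $\Hd[2,2]{R_A}$ explicit.

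The step that does not work as written is your ``smoothing'' claim in the final paragraph: pushing the $C^\alpha$ eigenfunction through one application of $\mathcal A_s$ does \emph{not} produce an element of $\Hd[2,2]{R_A}$. For complex $(z_1,z_2)\in\Bd[2,2]{R_A}$ the images $G_n^\pm(z_1,z_2)$ lie in $\Bd[2,2]{r_A}\subset\C^2$, not in $[-1,1]^2$, so $\varphi\circ G_n^\pm$ is not even defined there when $\varphi$ is merely H\"older on the real square. Fortunately this direction is unnecessary: your Hardy-space Perron eigenfunction $h_s$ already restricts to a strictly positive $C^\alpha$ eigenfunction on $[-1,1]^2$, and the uniqueness of the positive eigenfunction in the Mauldin--Urbanski theory forces its eigenvalue to equal $\rho_{C^\alpha}(\mathcal A_s)$. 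That single inclusion $\Hd[2,2]{R_A}\hookrightarrow C^\alpha$ suffices for the identification, so you can simply drop the converse.
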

\begin{proof}
    Using results from the book by Mauldin and Urbanski~\cite{MU03}, we deduce
    that $\rho_{C^\alpha}(\mathcal A_s)$, the
    spectral radius of $\mathcal A_s$ on the space of \emph{H\"older continuous
    functions} on $C^\alpha([-1,1]^2)$, is a monotone decreasing function and is
    equal to the simple (maximal) eigenvalue. Furthermore, 
    $\rho(\mathcal A_s) = 1$ when $s = \dim_H \apack$. 

    To be more specific,~\cite[Theorem~2.3.3]{MU03} allows us to identify the topological
pressure $P(s)$ with $\log \rho_{C^\alpha}(\mathcal A_s)$. From \cite[Proposition~4.2.8]{MU03} we
have that $\rho_{C^\alpha}(\mathcal A_s)$ is a decreasing function of~$s$.
Then~\cite[Theorem~4.2.13]{MU03} gives that there is a unique~$s$ such that 
$\rho(\mathcal A_s) = 1$, and this is the Hausdorff dimension of the limit set~$\apack$. 
To conclude, by~\cite[Theorem 2.4.6]{MU03} $\mathcal A_s$ has a spectral gap
on~$C^\alpha([-1,1]^2)$.  

    It remains to show that the spectral radius of~$\mathcal A_s$ on the Hardy
    space $\Hd[2,2]{R_A}$ is equal to the spectral radius of $\mathcal A_s$ on
    $C^\alpha([-1,1]^2)$, namely $\rho_{C^\alpha}(\mathcal A_s) = \rho_{\mathcal
    H}(\mathcal A_s)$

    By straightforward computation  we establish in Proposition~\ref{p:APrioriConstants}\ref{res:CLJacobianBound}
   that for any $s > 1/2$  
	$$
    \| \mathcal A_s \|_{\Hd[2,2]{R_A}} \leq \sum_{\pm,n} \|J_n^\pm
    \|_{\Hd[2,\infty]{R}}^s < \infty. 
    $$
    Applying~\cite[Proposition~5.4]{BJ07} we conclude that $\mathcal A_s$ is a
    compact operator on $\Hd[2,2]{R_A}$. 
    
    We now want to apply~\cite[Lemma A.1]{BT08} to deduce the equality between
    the peripheral spectra on $\Hd[2,2]{R_A}$ and  $C^\alpha([-1,1]^2)$. Indeed, we
    may choose $B_0 = B_1 = \Hd[2,2]{R_A}$ and $B_2 = C^\alpha[-1,1]^2$. Then
    $B_0$ is dense in $B_2$; and both $B_0$, $B_2$ are invariant with respect to
    $\mathcal A_s$, so the hypothesis of the Lemma holds.  
\end{proof}

Therefore our goal is to compute the value of the parameter~$s$ such
that~$\mathcal A_s$ has maximal eigenvalue~$1$ when acting on $\mathcal
H_{R_A}^{2,2}$. Following the strategy outlined
above, our first step is to construct a finite rank approximation. In the next
section we shall show that for analyticity improving weighted composition operators
the Chebyshev--Lagrange approximation converges
exponentially. The transfer operator~\eqref{eq:tropgen} will be a special case. 

\section{Chebyshev approximation in multiple dimensions}\label{s:Chebyshev}

In this section we generalise the approach of Bandtlow and Slipantschuk~\cite{BS20} to multiple dimensions. Our
approximation of the Apollonian circle packing's transfer operator will be a
special case corresponding to the choices dimension~$d=2$ and spatial~$\ell^p$ metric~$p=2$. 

We will first define the Hardy spaces in which this approximation occurs; we
will then define the projection operator and give our main theorem on transfer
operators, Theorem~\ref{t:Cheby}. Finally we will prove the main approximation
result, Proposition~\ref{p:ProjectionNorms}. 

\subsection{Hardy spaces}

It is easy to generalise the construction of the Bernstein ellipses in $\mathbb C^2$
given in~\eqref{eq:el2} to $\mathbb C^d$. In the future, it will be more convenient
to use the~$\ell_p$ norm on~$\mathbb R^d$ for $p \in [1,\infty]$. So let us define
$$
\mathrm{Ball}_r^{d,p} = \left\{ x \in \mathbb R^d \mid \|x\|_p < r \right\}.
$$
By analogy with~\eqref{eq:el2} we introduce
the $d$-{\it dimensional Bernstein ellipses} 
\begin{equation}
  \label{eq:elb}
\Bd{r} := \cos (\R^d + i \mathrm{Ball}_{r}^{d,p}) \subseteq \Bd[d,\infty]{r} =
(\mathcal{E}_r)^d \subset \C^d.
\end{equation}
	Note that when $d=1$ or $d=2$ this agrees with the $\mathcal E_r$ and
    $\mathcal E_r^{2,2}$ as defined before.

 Next we want to consider a Hardy space of holomorphic functions on
 the interior of $\Bd{r}$ continuous onto the boundary. 
Similarly to the case $d=2$ we introduce a
closed subspace consisting of functions that map the real cube
$[-1,1]^d$ to the real line. More precisely, we set
\begin{equation}
  \label{eq:hardy_rdp}
\mathcal H_r^{d,p} = \{ \varphi \colon \mathcal E_r^{d,p} \to \mathbb C \mid 
\varphi \in C^\omega(\mathcal E_r^{d,p}) \cap C^0 (\overline{\mathcal
E_r^{d,p}}), \ \varphi([-1,1]^d) \subseteq \R \}.
\end{equation}
with the supremum norm 
 \begin{equation}
     \|\varphi\|_{\mathcal H_r^{d,p}} = \sup_{\z \in \Bd{r}} |\varphi(\z) |. 
 \label{eq:hdNorm}
\end{equation}

The transfer operator for the Apollonian packing defined in~\eqref{eq:ApTrOp} is
a weighted composition operator acting
on~$\mathcal H_r^{2,2}$ spaces. 

More generally, let us assume that the sizes of the ellipses $0<r < R$, as well
as~$p$ and~$d$ are fixed.
Given countable families of weight functions $w_j \in \mathcal H_R^{d,p}$ and
analytic maps $v_j
\colon \mathcal E_R^{d,p} \to \mathcal E_r^{d,p}$, we define the transfer operator 
in a general form 
\begin{equation}
  \label{eq:tropgen}
\mathcal L \colon \mathcal H_r^{d,p} \to \mathcal H_R^{d,p}, 
\qquad \mathcal L \varphi = \sum_j w_j \cdot (\varphi \circ v_j). 
\end{equation}
The following proposition shows that in order to guarantee that the operator is
bounded it is sufficient to impose an assumption of
summability of weights. 
\begin{proposition}\label{p:TransferBounded}
Assume that $W < \infty$ is chosen such that
	$\sum_{\iota \in I} \bigl\| w_\iota \bigr\|_{\Hd{R}} \leq W $. Then
  for $0< r < R$ as above, the transfer operator~\eqref{eq:tropgen} is  a bounded operator $\Hd{r}\to \Hd{R}$ with
	\[ \| \L \|_{\Hd{r}\to \Hd{R}} \leq W. \]
\end{proposition}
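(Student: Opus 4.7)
The plan is to carry out a standard Weierstrass M-test argument in three steps: (i) bound each summand on $\overline{\mathcal{E}_R^{d,p}}$ by a constant independent of $z$; (ii) deduce absolute and uniform convergence of the defining series; (iii) transfer continuity, holomorphy, and real-valuedness on $[-1,1]^d$ from the partial sums to the limit, and read off the norm bound.

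For step (i), I fix $\varphi \in \mathcal{H}_r^{d,p}$ and $z \in \overline{\mathcal{E}_R^{d,p}}$. The hypothesis $v_j(\mathcal{E}_R^{d,p}) \subseteq \mathcal{E}_r^{d,p}$ together with continuity of $v_j$ on the closed ellipse gives $v_j(\overline{\mathcal{E}_R^{d,p}}) \subseteq \overline{\mathcal{E}_r^{d,p}}$, hence
\[
|\varphi(v_j(z))| \leq \|\varphi\|_{\mathcal{H}_r^{d,p}}.
\]
Combined with the trivial bound $|w_j(z)| \leq \|w_j\|_{\mathcal{H}_R^{d,p}}$, this yields the pointwise majorant $|w_j(z)\varphi(v_j(z))| \leq \|w_j\|_{\mathcal{H}_R^{d,p}} \|\varphi\|_{\mathcal{H}_r^{d,p}}$, with the right-hand side independent of $z$.

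For step (ii), summing in $j$ and invoking the hypothesis $\sum_j \|w_j\|_{\mathcal{H}_R^{d,p}} \leq W$ gives $\sum_j |w_j(z)\varphi(v_j(z))| \leq W \|\varphi\|_{\mathcal{H}_r^{d,p}}$ uniformly in $z \in \overline{\mathcal{E}_R^{d,p}}$, so by the Weierstrass M-test the series defining $\mathcal{L}\varphi$ converges absolutely and uniformly on the closed ellipse. For step (iii), the uniform limit of continuous functions is continuous on $\overline{\mathcal{E}_R^{d,p}}$, and the uniform limit of holomorphic functions is holomorphic on the open set $\mathcal{E}_R^{d,p}$ by the classical Weierstrass convergence theorem in several complex variables (e.g.\ applied to every polydisk compactly contained in $\mathcal{E}_R^{d,p}$). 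Real-valuedness on $[-1,1]^d$ passes termwise to the limit: each $w_j$ is real there by definition of $\mathcal{H}_R^{d,p}$, and $\varphi \circ v_j$ is real there because $v_j$ preserves $[-1,1]^d$ in the contracting IFS setting; hence $\mathcal{L}\varphi \in \mathcal{H}_R^{d,p}$. Taking the supremum over $z$ of the uniform majorant then gives $\|\mathcal{L}\varphi\|_{\mathcal{H}_R^{d,p}} \leq W \|\varphi\|_{\mathcal{H}_r^{d,p}}$.

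There is no substantial obstacle: the whole argument is bookkeeping on top of the Weierstrass M-test plus the elementary fact that sup-norm limits of holomorphic functions are holomorphic. The only point requiring minor care is checking that the limit lies in $\mathcal{H}_R^{d,p}$ rather than in the larger space of bounded holomorphic functions without the reality constraint on $[-1,1]^d$; this is immediate once the invariance $v_j([-1,1]^d) \subseteq [-1,1]^d$ is noted, which is built into the IFS setup underlying \eqref{eq:tropgen}.
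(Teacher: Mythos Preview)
Your proof is correct and takes essentially the same approach as the paper: both bound $|w_j(z)\,\varphi(v_j(z))| \le \|w_j\|_{\Hd{R}}\|\varphi\|_{\Hd{r}}$ using $v_j(\Bd{R})\subseteq\Bd{r}$, then sum to get $\|\L\varphi\|_{\Hd{R}}\le W\|\varphi\|_{\Hd{r}}$. You are simply more careful than the paper in verifying via the Weierstrass M-test that $\L\varphi$ actually lands in $\Hd{R}$ (holomorphy, continuity to the boundary, real-valuedness on $[-1,1]^d$), which the paper's proof leaves implicit.
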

\begin{proof}
	From the definitions of $\L$ and the norm on $\Hd{R}$ we have
	\[ \| \L \psi \|_{\Hd{R}} = \sup_{\z \in \Bd{R}} \Bigl|  \sum_{\iota \in I}
  w_\iota(z)\ \psi(v_\iota(z)) \Bigr|. \]
	Since $v_\iota$ maps $\Bd{R}$ into $\Bd{r}$ we can bound
  $$
	 \| \L \psi \|_{\Hd{R}} \leq \sup_{\z \in \Bd{R}} 
    \sum_{\iota \in I} \left| w_\iota(z)\right|\ \| \psi\|_{\Hd{r}}  
	\leq \sum_{\iota \in I} \left\| w_\iota \right\|_{\Hd{R}} \|\psi\|_{\Hd{r}}
  \leq W \cdot  \|\psi\|_{\Hd{r}}, 
  $$
	as required.
\end{proof}

In the next section we explain how to approximate the transfer operator by
finite rank operators.
\subsection{Chebyshev approximation in multiple dimensions}

The Chebyshev polynomials 
\begin{equation}
    \label{eq:TkDef1}
  	T_k(z) = \cos (k \arccos z), \ k \in \N 
\end{equation}
	are a family of univariate polynomials. They are orthogonal with respect to
  the inner product 
	$$
  \langle \varphi, \psi \rangle = \int_{-1}^1 \frac{\overline{\varphi(x)}
  \psi(x)}{\sqrt{1-x^2}}  \d x  .
  $$
	They are very well-adapted to smooth function approximation on intervals,
  which can be partly attributed to their close connection to Fourier series.
  One consequence of this connection is that the polynomials $T_k$,
  $0 \le k < K$ are mutually orthogonal with respect to the counting measure on the
  {\it Chebyshev nodes of the first kind} of order $K$: 
  \begin{equation} x_{k,K} = 
      \cos \tfrac{\pi (2k+1)}{2K}, \ k = 0,1,\ldots, K-1.
      \label{eq:ChebyshevNodes} 
  \end{equation} 
  and form a basis in the space of polynomials of degree~$K-1$. 
	The transformation between values of a function at $x_{k,K}$ and its
  coefficients in the basis of Chebyshev polynomials can be computed in $O(K\log K)$
  time using the Fast Fourier Transform~\cite{T12}. 
	
	To approximate smooth functions on $d$-dimensional hypercubes we will use a
  tensor product of Chebyshev bases, with the new basis functions being  
  \begin{equation}
    T_{\k}(\z) = \prod_{j=1}^d T_{k_j}(z_j),\ \k \in \N^d.
  \label{eq:TkDef2}
  \end{equation}
	The collection of tensor product functions 
  $\{T_{\k}: \|\k\|_{\ell_\infty} < K \}$ is orthogonal with respect to
  counting measure on the (tensor product) Chebyshev nodes 
  \[ \left\{ \vec x_{\k,K} = (x_{k_1,K}, x_{k_2,K}, \ldots, x_{k_d,K}) \mid \|\k\|_{\ell_\infty} < K \right\}. \]
  By analogy with the one-dimensional case, we introduce the basis of the space
  of polynomials $\langle
  T_{\k} \mid \|\k\|_{\ell_\infty} < K
  \rangle$ consisting of the Chebyshev--Lagrange polynomials by
\begin{equation}
\left\{  \ell_{\vec k,K}(\vec y) = \prod_{j=1}^d \ell_{k_j,K}(y_j),
\|\k\|_{\ell_\infty} < K \,\Bigl|\, \ell_{k_j,K}(y) := \prod_{i=1,\ldots,K, i\neq k_j}
\tfrac{y-x_{i,K}}{x_{k,K}-x_{i,K}},\, \right\}. 
\label{eq:LagrangePoly}
\end{equation}
Note that here we use an~$\ell^\infty$ norm independent of the choice of~$p$ parameter.

Let the projection operator $\P_K \colon \mathcal H_\rho^{p,d} \to \langle
  T_{\k} \mid \|\k\|_{\ell_\infty} < K
  \rangle$ be interpolation on these nodes, that is 
	\begin{equation}
        \label{eq:projOp}
		(\P_K \phi)(\vec y) = \sum_{\vec k \in \{1,2,\ldots,K\}^d} \phi(\vec x_{\vec k,K}) \ell_{\vec k,K}(y)
	\end{equation}

The following results, extending those of \cite{BS20},
  show that the transfer operator $\L$ is nicely bounded in these Hardy spaces,
  and is readily approximable using interpolation at Chebyshev nodes. 

  In what follows,~$\idop$ stands for the natural embedding between the spaces
  involved. 

	\begin{proposition}\label{p:ProjectionNorms}
  For any  $R>\rho\geq 0$ and $K \in \N$ the projection error is bounded by
	  	\[ \| \idop - \P_K \|_{\Hd{R} \to \Hd{\rho}} \leq E_{d,K}(R-\rho).\]
        where  $E_{d,K}$ is the error function given by\footnote{The estimate
        for $d \ge 4$ can be improved by computing the incomplete Gamma function
        exactly --- see~\eqref{eq:sumub0}.} 
 \begin{equation} 
     \label{eq:erFeps}
     E_{d,K}(x) = 
     \begin{cases}
 	         8 e^{-(K-1)x}\cdot\frac{1}{x}, & \mbox{ if } d = 1; \\
         16 e^{-(K-1)x}\cdot\frac{1+Kx}{x^{2}}, & \mbox{ if } d = 2; \\
         48 e^{-(K-1)x}\cdot\frac{2+ 2 Kx +  K^2 x^2}{x^{3}},  & \mbox{ if } d =3; \\
         2^d e^{1-(K-1)x} \cdot \frac{d^2  K^{d-1} }{x}      
     & \mbox{ if }          d \ge 4.
     \end{cases}
 \end{equation}

\end{proposition}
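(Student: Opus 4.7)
The plan is to expand any $\phi \in \Hd{R}$ in the tensor Chebyshev basis $\{T_{\k}\}$ and control the interpolation error term by term, exploiting aliasing and a counting argument over multi-indices. In the one-dimensional case ($d=1$), following~\cite{BS20}: shifting the cosine-series contour to $\partial \mathcal{E}_R$ gives $|a_k| \leq 2\|\phi\|_{\mathcal{H}_R} e^{-kR}$, while Chebyshev aliasing $\P_K T_k = \pm T_{k'}$ with $k' < K$ (for $k \geq K$) yields $|T_k - \P_K T_k|_{\mathcal{E}_\rho} \leq 2 e^{k\rho}$; summing the resulting geometric tail with the elementary inequality $(1-e^{-x})^{-1} \leq e^x/x$ recovers the $d=1$ bound of~\eqref{eq:erFeps}.

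For $d \geq 2$, the tensor coefficient $a_{\k}$ is an iterated Fourier integral. A single-axis contour shift $\xi_j \mapsto \xi_j + iR\,\mathrm{sgn}(k_j)$ produces $|a_{\k}| \leq 2^d\|\phi\|_{\Hd{R}} e^{-R k_j}$, and crucially this shift is admissible for every $p \in [1,\infty]$ since $R\vec e_j \in \partial\mathrm{Ball}_R^{d,p}$ for all such $p$. Optimizing the choice of $j$ yields the $p$-uniform bound $|a_{\k}| \leq 2^d\|\phi\|_{\Hd{R}} e^{-R\|\k\|_\infty}$. Because $\P_K$ is a tensor product of 1D Chebyshev interpolations, coordinate-wise aliasing sends $T_{\k}$ to $\pm T_{\k'}$ with $k'_j = k_j$ when $k_j < K$ and $k'_j < K$ otherwise; thus $\P_K T_\k = T_\k$ when $\|\k\|_\infty < K$, and the error decomposes as $(\idop - \P_K)\phi = \sum_{\|\k\|_\infty \geq K} a_{\k}(T_{\k} - \P_K T_{\k})$. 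Combining the polyellipse bound $|T_\k|_{\Bd{\rho}} \leq e^{\rho \|\k\|_q}$ (with $q$ the H\"older conjugate of $p$), the matched sharp coefficient bound $|a_\k| \leq 2^d\|\phi\| e^{-R\|\k\|_q}$, and the monotonicity $\|\k\|_q \geq \|\k\|_\infty$, each summand is uniformly controlled by $2^{d+1}\|\phi\| e^{-(R-\rho)\|\k\|_\infty}$, independently of $p$.

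The remaining step is to estimate the $p$-independent sum $S_d(x,K) := \sum_{\|\k\|_\infty \geq K} e^{-x\|\k\|_\infty} = \sum_{M \geq K}[(M+1)^d - M^d]e^{-xM}$. For $d \leq 3$, the closed-form expression $\Gamma(d, y) = (d-1)!\,e^{-y} \sum_{i < d} y^i/i!$ for the incomplete Gamma function gives polynomial-in-$Kx$ expressions that match~\eqref{eq:erFeps}. For $d \geq 4$, I would use $(M+1)^d - M^d \leq d(M+1)^{d-1}$ together with the dominance of the leading term of $\Gamma(d, xK)$ when $xK \geq 1$ (absorbing the small-$xK$ regime into the prefactor $e^{1}$) to obtain the simpler bound of order $d^2 K^{d-1} e^{-(K-1)x}/x$.

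The main obstacle is ensuring uniformity in $p \in [1,\infty]$, since the geometry of $\Bd{R}$ depends strongly on $p$ and the tight coefficient and $|T_\k|$-bounds both feature the H\"older conjugate exponent $q$. This is resolved by combining the sharp bounds at their common exponent $\|\k\|_q$ and then applying the monotonicity $\|\k\|_q \geq \|\k\|_\infty$ to absorb the $p$-dependence.
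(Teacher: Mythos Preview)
Your overall architecture---Chebyshev expansion, aliasing, coefficient decay via contour shift, then the shell-counting sum $\sum_{M\geq K}[(M+1)^d-M^d]e^{-xM}$ bounded by an incomplete Gamma function---is exactly the paper's approach.

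There is, however, a genuine gap in the coefficient estimate. Your single-axis shift $\xi_j\mapsto\xi_j+iR\,\mathrm{sgn}(k_j)$ yields only $|a_\k|\leq 2^d\|\phi\|\,e^{-R\|\k\|_\infty}$, yet two sentences later you invoke ``the matched sharp coefficient bound $|a_\k|\leq 2^d\|\phi\|\,e^{-R\|\k\|_q}$''. These are not the same, and the weaker $\|\k\|_\infty$ bound does \emph{not} suffice: paired with $|T_\k|_{\Bd{\rho}}\leq e^{\rho\|\k\|_q}$ the summand is controlled only by $e^{-R\|\k\|_\infty+\rho\|\k\|_q}$, and since $\|\k\|_q\geq\|\k\|_\infty$ this exponent can exceed $-(R-\rho)\|\k\|_\infty$ by an arbitrarily large amount (take $p$ close to $\infty$ so $q$ close to $1$, and $\k$ with all entries equal). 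The monotonicity $\|\k\|_q\geq\|\k\|_\infty$ goes the wrong way here.

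To obtain the $\|\k\|_q$ coefficient bound you actually need a \emph{simultaneous} multi-axis shift by $iR\boldsymbol{\kappa}$ with $\|\boldsymbol{\kappa}\|_p=1$, choosing the H\"older-dual direction $\kappa_j\propto\sigma_j(\mathrm{sgn}\,k_j)|k_j|^{1/(p-1)}$ so that $\sum_j\sigma_j k_j\kappa_j=\|\k\|_q$. This is precisely what the paper does, and once you have it, your ``combine at exponent $\|\k\|_q$, then drop to $\|\k\|_\infty$'' step works exactly as you describe. The single-axis observation is correct but is only the degenerate case $\boldsymbol{\kappa}=\vec e_j$; it does not by itself deliver the bound you use.
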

In other words, the more the maps $v_\iota$ contract, 
the bigger the difference in sizes between the ellipses $\mathcal
E_R^{d,p}$ and $\mathcal E_r^{d,p}$ that the contraction maps one into another
is; and consequently the smaller the projection error is.
Proposition~\ref{p:ProjectionNorms} facilitates approximation of the transfer
operator by finite-rank operators. The proof of
Proposition~\ref{p:ProjectionNorms} is rather technical and we postpone it until
\S\ref{s:propProjNormProof}. The most important consequence for us is the next 
theorem, which is its immediate consequence. 

\begin{theorem}\label{t:Cheby}
Given $0 < r < R$, $p, d \in \N$ and countable families of
weight functions $w_j \in \mathcal H_R^{d,p}$ and analytic maps $v_j
\colon \mathcal E_R^{d,p} \to \mathcal E_r^{d,p}$ satisfying 	$
\sum \| w_j \|_{\Hd{R}} \leq W $, consider the transfer operator
$$
\mathcal L \varphi = \sum_j w_j \cdot (\varphi \circ v_j). 
\eqno{\eqref{eq:tropgen}}
$$
		Then for all $K \in \N$ and $\rho \in [0,R)$ we have 
		\[ \| \P_K \L - \L \|_{\Hd{r}\to\Hd{\rho}} \leq W \cdot
        E_{d,K}(R-\rho),\]
        where $ E_{d,K}$ is the error function given
        by~\eqref{eq:erFeps}.
  \end{theorem}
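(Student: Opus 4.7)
The plan is to recognize Theorem~\ref{t:Cheby} as a direct composition of the two supporting results. The key algebraic manipulation is to factor
\[ \P_K \L - \L = (\P_K - \idop)\circ \L, \]
so that the error operator splits as a composition passing through the larger Hardy space $\Hd{R}$: the outer piece $\idop - \P_K$ acts $\Hd{R}\to\Hd{\rho}$, while the inner piece $\L$ acts $\Hd{r}\to\Hd{R}$.

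The first step is to verify that this factorisation is valid between the spaces claimed. By Proposition~\ref{p:TransferBounded}, the summability hypothesis $\sum_j \|w_j\|_{\Hd{R}} \leq W$ ensures that $\L$ is a well-defined bounded operator $\Hd{r}\to\Hd{R}$ with norm at most $W$. Hence $\L\varphi \in \Hd{R}$ for every $\varphi \in \Hd{r}$, and $(\idop - \P_K)$ can then be applied to $\L\varphi$ as an operator from $\Hd{R}$ into $\Hd{\rho}$, since $\rho < R$.

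The second step is to apply submultiplicativity of operator norms to this composition, yielding
\[ \| \P_K \L - \L\|_{\Hd{r}\to\Hd{\rho}} \leq \|\idop - \P_K\|_{\Hd{R}\to\Hd{\rho}}\cdot \|\L\|_{\Hd{r}\to\Hd{R}}. \]
Finally, the first factor is bounded using Proposition~\ref{p:ProjectionNorms} by $E_{d,K}(R-\rho)$, and the second factor is bounded by $W$ as noted above. Multiplying gives exactly the claim.

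There is essentially no obstacle here: all the analytical work has been absorbed into Propositions~\ref{p:TransferBounded} and~\ref{p:ProjectionNorms}. The only care needed is bookkeeping of the three radii $r, R, \rho$ and checking that the factorisation routes through $\Hd{R}$ correctly, so that the projection error estimate is applied with the gap $R-\rho$ between the source radius of $\idop - \P_K$ and its target radius, rather than with $r$ or some other combination.
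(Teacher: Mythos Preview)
Your proof is correct and follows exactly the same approach as the paper: factor $\P_K\L - \L = (\P_K - \idop)\circ\L$ through $\Hd{R}$, apply submultiplicativity, and invoke Propositions~\ref{p:TransferBounded} and~\ref{p:ProjectionNorms} for the two factors.
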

	\begin{proof}
	Using Propositions~\ref{p:TransferBounded} and~\ref{p:ProjectionNorms}, we have
		\[ \| \P_K \L - \L \|_{\Hd{r} \to \Hd{\rho}} \leq \| \P_K - \idop \|_{\Hd{R}
        \to \Hd{\rho}} \| \L \|_{\Hd{r} \to \Hd{R}} \leq W \cdot E_{d,K}(R-\rho), \]
		as required.
	\end{proof}

\begin{corollary}
    \label{cor:trCompact}
Under the hypothesis of Theorem~\ref{t:Cheby} the
operator~$\mathcal L$ is a compact endomorphism on~$\Hd{r}$.
\end{corollary}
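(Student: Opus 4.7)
The plan is to deduce compactness of $\mathcal L$ from the approximation bound in Theorem~\ref{t:Cheby} by exhibiting $\mathcal L$ as the operator-norm limit of finite-rank operators on $\Hd{r}$.

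First I would observe that, because $r < R$, functions in $\Hd{R}$ restrict to functions in $\Hd{r}$, and this restriction is a bounded embedding (in fact a contraction for the supremum norm, since $\Bd{r} \subseteq \Bd{R}$). Composing with this embedding, the transfer operator $\mathcal L$ of \eqref{eq:tropgen}, which by Proposition~\ref{p:TransferBounded} is bounded $\Hd{r}\to\Hd{R}$, becomes a bounded endomorphism of $\Hd{r}$. In the same way I regard $\P_K \mathcal L$ as an endomorphism of $\Hd{r}$.

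Next I would note that each $\P_K \mathcal L$ is of finite rank as an operator on $\Hd{r}$. Indeed, by the definition~\eqref{eq:projOp} the image of $\P_K$ lies in the finite-dimensional space $\langle T_\k \mid \|\k\|_{\ell_\infty} < K\rangle$ of polynomials in $d$ variables of degree at most $K-1$ in each variable; such polynomials are entire and in particular belong to $\Hd{r}$.

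Finally I would apply Theorem~\ref{t:Cheby} with the choice $\rho = r$, which is admissible because $r < R$. It yields
\[
\|\P_K \mathcal L - \mathcal L\|_{\Hd{r}\to\Hd{r}} \;\leq\; W \cdot E_{d,K}(R-r),
\]
and the explicit formulas~\eqref{eq:erFeps} show that $E_{d,K}(R-r)\to 0$ as $K\to\infty$, since $R-r>0$ forces the exponential factor $e^{-(K-1)(R-r)}$ to dominate any polynomial prefactor in $K$. Hence $\mathcal L$ is the norm limit of finite-rank operators on $\Hd{r}$ and is therefore compact.

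There is no real obstacle here; the only subtlety worth being explicit about is that the approximants and $\mathcal L$ must all be viewed as operators on the single space $\Hd{r}$ (via the embedding $\Hd{R}\hookrightarrow\Hd{r}$) in order for the phrase ``norm limit of finite-rank operators'' to make sense, and this relies on the strict inequality $r < R$ so that Theorem~\ref{t:Cheby} is applicable with $\rho = r$.
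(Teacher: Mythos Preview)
Your proof is correct and is essentially the same as the paper's. The only cosmetic difference is factoring: the paper shows that the inclusion $\idop:\Hd{R}\hookrightarrow\Hd{r}$ is compact (approximating it by the finite-rank $\P_K$ via Proposition~\ref{p:ProjectionNorms}) and then composes with the bounded map $\L:\Hd{r}\to\Hd{R}$, whereas you approximate $\L$ itself by $\P_K\L$ using Theorem~\ref{t:Cheby} with $\rho=r$; since Theorem~\ref{t:Cheby} is proved precisely by that same factorisation, the two arguments coincide.
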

\begin{proof}To show that $\L$ is compact, we need only show that the inclusion $\Hd{R} \to
\Hd{r}$ is compact. Proposition \ref{p:ProjectionNorms} gives that this
inclusion is approximable in operator norm by a sequence of finite-rank and
hence compact operators $\{\P_K \}_{K \in \N}$, and is therefore compact. 
\end{proof}

Since the (non-trivial) eigenfunctions of $\P_K \L$ lie in 
$\im \P_K = \spann\{T_\k \mid \| \k \|_\infty \leq K \}$,
the spectral data of $\P_K\L$ can be obtained from the truly finite-dimensional
restriction $\P_K \L |_{\im \P_K}$, which is given by a matrix in the basis of
Chebyshev--Lagrange polynomials~\eqref{eq:LagrangePoly}.
We shall use Theorem~\ref{t:Cheby} to obtain a good approximation to the spectral radius of~$\L$. 

\begin{remark}
Extending the definition of C-expansion from~\cite{W19}, we can expect
that, at least for a fixed $j$, $R$ and $r$ satisfying hypothesis of
Theorem~\ref{t:Cheby} exist provided $v_j$ and $w_j$ 
  are real-analytic with $v_j$ obeying
  the following C-contraction condition: 
		\[ \sup_{\btheta \in [0,\pi]^d} \left\|(\arccos (  v_j ( \cos
        (\btheta)))^\prime_{\btheta} \right\|_{\ell^p} < 1 \]
        where $\cos$ is applied component-wise, and $\btheta \in [0,\pi]^d$ can
        be thought of as the arc-cos of the position variable $\x \in [-1,1]^d$.
        (The $\cos$ connection arises from the connection between Fourier and
        Chebyshev functions: $T_k(\cos \theta_j) = \cos k\theta_j$.) 
	\end{remark}

\begin{remark}
    In practice, the ambient dimension $d$ will be fixed by the problem and
generally low: the Apollonian circle packing IFS has $d=2$. We shall show later
in \S\ref{s:aprioribdd} that for the Apollonian packing we can choose $R_{A} =
1.4$ and $r_{A} = 0.9$. It turns out (see Table~\ref{tab:Scalings}) that in order to 
guarantee that the dimension is computed with the error~$\varepsilon$ it is
sufficient to consider polynomials of degree $K \ge -\log
\varepsilon/R_{A}$. 
\end{remark}

To complete this section it only remains to prove Proposition~\ref{p:ProjectionNorms}. 

\subsection{Proof of Proposition~\ref{p:ProjectionNorms}  }
\label{s:propProjNormProof}

Here we show that the projection operator~\eqref{eq:projOp} is close to the identity
and establish the formula~\eqref{eq:erFeps} for the bound~$E_{d,K}$ on the error.

We need two easy technical statements that will simplify the exposition of the
argument.
\begin{lemma}\label{l:ModNEllQNorm}
	Let ${\modck_K} (k_j) := \left|\left( (k_j + K) \mod 2K\right) - K\right|$, and define ${\modck_K} (\k)$
    component-wise. Then, for all $\k\in \N^d$ and $q \in [1,\infty]$,
    $\|{\modck_K} (\k)\|_{q} \leq \|\k\|_{q}$. 
\end{lemma}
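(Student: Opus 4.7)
The plan is to reduce the claim to a coordinate\-wise scalar inequality and then appeal to monotonicity of the $\ell^q$-norm on the nonnegative orthant.

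First I would examine the scalar function $k_j \mapsto {\modck_K}(k_j)$. The inner expression $(k_j + K) \mod 2K$ is $2K$-periodic in $k_j$ and takes values in $\{0, 1, \ldots, 2K-1\}$; subtracting $K$ and applying absolute value turns this into a triangle wave. Concretely, writing $s = k_j \mod 2K$, one verifies that ${\modck_K}(k_j) = s$ when $s \in \{0,\ldots,K\}$ and ${\modck_K}(k_j) = 2K - s$ when $s \in \{K,\ldots,2K-1\}$. In particular ${\modck_K}(k_j) \in \{0, 1, \ldots, K\}$ for every $k_j$.

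Next I would establish the pointwise bound ${\modck_K}(k_j) \leq k_j$. If $k_j \leq K$ then by the formula above ${\modck_K}(k_j) = k_j$; and if $k_j > K$ then ${\modck_K}(k_j) \leq K < k_j$. This yields $0 \le {\modck_K}(\k)_j \le k_j$ component\-wise.

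Finally I would conclude by monotonicity of the $\ell^q$-norm on $[0,\infty)^d$: for nonnegative $a, b \in \R^d$ with $a_j \leq b_j$ for all $j$, one has $\|a\|_q \leq \|b\|_q$, which is immediate from the definition when $q \in [1,\infty)$ and by taking the coordinate maximum when $q = \infty$. Applying this with $a = {\modck_K}(\k)$ and $b = \k$ delivers the stated inequality.

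There is no real obstacle in this argument; the only step requiring any care is the brief case analysis describing ${\modck_K}$ as a triangle wave, after which the rest is formal.
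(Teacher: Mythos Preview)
Your proof is correct and follows essentially the same approach as the paper: reduce to the scalar inequality $\modck_K(k_j) \le k_j$ via the two cases $k_j \le K$ (equality) and $k_j > K$ (bounded by $K$), then pass to the $\ell^q$-norm by monotonicity. Your triangle-wave description is a little more detailed than the paper's terse two-line argument, but the substance is identical.
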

\begin{proof}
	It suffices to show that $|{\modck_K} (k_j)| \leq |k_j|$. Indeed, we have that
    either $k_j \leq K$,
    in which case ${\modck_K} (k_j) = k_j$, or $k_j > K$, in which case $|{\modck_K}
    k_j| \leq K < |k_j|$. 
\end{proof}

\begin{lemma}
    \label{lem:h2}
	For all $\k \in \N^d$, $\P_K T_{\k} = c_{\k,K} T_{{\modck_K} (\k)}$, where
    $c_{\k,K} \in \{-1,1,0\}$. In particular, $\P_K T_{\k} = T_{\k}$ when
    $\|\k\|_{\ell^\infty} < K$ and $c_{\k,K} = 0$ when there exists $j$ such
    that $k_j \mod 2K = K$. 
\end{lemma}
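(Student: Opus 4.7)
The plan is to reduce the claim to the one-variable case via the tensor product structure~\eqref{eq:TkDef2}, and to handle the one-variable case by exploiting the identity
\[ T_k(x_{k',K}) = \cos\!\left(k \cdot \tfrac{\pi(2k'+1)}{2K}\right), \]
which follows immediately from~\eqref{eq:TkDef1} and~\eqref{eq:ChebyshevNodes}.

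First I would establish the one-dimensional version: for every $k \in \mathbb N$ there exists $\epsilon_k \in \{-1,0,1\}$ (vanishing precisely when $k \equiv K \pmod{2K}$) such that $T_k(x_{k',K}) = \epsilon_k T_{\modck_K(k)}(x_{k',K})$ for all $k' \in \{0,\ldots,K-1\}$. This is a short computation using two reductions of the index $k$: first, $2\pi$-periodicity of cosine gives $T_{k+2K}(x_{k',K}) = -T_k(x_{k',K})$, which brings $k$ into a residue $r \in [0, 2K)$ at the cost of a sign; second, the reflection identity $T_{2K-s}(x_{k',K}) = -T_s(x_{k',K})$ folds $r \in (K, 2K)$ back onto $(0,K)$ with another sign. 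A quick case check confirms that the surviving representative is exactly $\modck_K(k)$, and the boundary value $r = K$ corresponds to the identity $\cos(\pi(2k'+1)/2) = 0$, forcing $\epsilon_k = 0$.

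Next I would take the tensor product. By~\eqref{eq:TkDef2}, at each node $\vec x_{\vec k',K}$ we have
\[ T_{\k}(\vec x_{\vec k',K}) = \Bigl(\prod_{j=1}^d \epsilon_{k_j}\Bigr) T_{\modck_K(\k)}(\vec x_{\vec k',K}), \]
so define $c_{\k,K} := \prod_j \epsilon_{k_j} \in \{-1,0,1\}$. If $c_{\k,K} = 0$ then $T_{\k}$ vanishes on every node, and hence $\P_K T_{\k} = 0$ directly from~\eqref{eq:projOp}. Otherwise $\modck_K(k_j) < K$ for every $j$, so $T_{\modck_K(\k)} \in \spann\{T_{\k'} : \|\k'\|_\infty < K\}$ is its own Chebyshev interpolant; since $\P_K T_{\k}$ also lies in this span, uniqueness of interpolation on the $K^d$ Chebyshev nodes yields $\P_K T_{\k} = c_{\k,K} T_{\modck_K(\k)}$. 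The ``in particular'' statements are then immediate from the one-dimensional case analysis: $\|\k\|_\infty < K$ gives $\modck_K(\k) = \k$ with $c_{\k,K} = 1$, and $c_{\k,K} = 0$ exactly when some $k_j \equiv K \pmod{2K}$.

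I expect the main obstacle to be purely bookkeeping: carefully tracking signs through the two-step reduction and matching them to the piecewise definition of $\modck_K$ used in Lemma~\ref{l:ModNEllQNorm}. No conceptual difficulty arises.
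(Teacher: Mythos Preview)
Your proposal is correct and follows essentially the same approach as the paper: reduce to the one-dimensional case via the tensor product structure~\eqref{eq:TkDef2}, then invoke the one-dimensional aliasing identity. The only difference is that the paper simply cites Trefethen~\cite{T12} for the $d=1$ case, whereas you supply the short direct computation; your argument is thus a self-contained version of the paper's proof.
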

\begin{proof}
  The statement is established in~\cite{T12}~for $d=1$. The case for higher
  dimensions follows by multiplicativity.
\end{proof}
\begin{lemma}
     \label{eq:|T|rho}
     For all $\rho > 0$ and for all $\k \in \mathbb R^d$ and 
		$q^{-1} + p^{-1} = 1$ we have
     $$
        \| T_{\k} \|_{\Hd{\rho}} \leq e^{\rho \| \k \|_q}.
        $$
\end{lemma}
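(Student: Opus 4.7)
The plan is to use the definition of the Bernstein ellipsoid $\mathcal E_\rho^{d,p}$ as the image of $\mathbb{R}^d + i\,\mathrm{Ball}_\rho^{d,p}$ under the component-wise cosine map. This is convenient because the Chebyshev polynomials satisfy the identity $T_k(\cos w) = \cos(kw)$ for every $w \in \mathbb C$, which immediately turns each factor $T_{k_j}(z_j)$ into a simple cosine of a complex argument.

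More precisely, first I would parametrise an arbitrary point $\z \in \overline{\mathcal E_\rho^{d,p}}$ as $z_j = \cos(\theta_j + i y_j)$ with $\btheta \in \mathbb R^d$ and $\y \in \mathbb R^d$ satisfying $\|\y\|_p \le \rho$. Then, by the definition~\eqref{eq:TkDef2} and the cosine identity,
\[ T_{\k}(\z) = \prod_{j=1}^d \cos\bigl(k_j(\theta_j + iy_j)\bigr). \]
The next step is the pointwise bound $|\cos(a+ib)| \le \cosh(b) \le e^{|b|}$ for real $a, b$, which follows from $|\cos(a+ib)|^2 = \cos^2 a + \sinh^2 b \le \cosh^2 b$. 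Applying this to each factor yields
\[ |T_{\k}(\z)| \le \prod_{j=1}^d e^{|k_j y_j|} = \exp\Bigl(\textstyle\sum_{j=1}^d |k_j|\,|y_j|\Bigr). \]

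The final step is Hölder's inequality with conjugate exponents $q$ and $p$:
\[ \sum_{j=1}^d |k_j|\,|y_j| \le \|\k\|_q \cdot \|\y\|_p \le \rho \|\k\|_q, \]
and taking the supremum over all such $\z$ in the closed Bernstein ellipsoid produces $\|T_{\k}\|_{\Hd{\rho}} \le e^{\rho \|\k\|_q}$, as desired.

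There is no real obstacle here: the only thing to be slightly careful about is checking that the parametrisation $\z = \cos(\btheta + i\y)$ indeed covers all of $\overline{\mathcal E_\rho^{d,p}}$ (which is true by definition of $\mathcal E_\rho^{d,p}$ and continuity/closure), and that the $\cosh \le e^{|\cdot|}$ bound is tight enough — it is, because the exponential upper bound is what propagates cleanly through the Hölder step.
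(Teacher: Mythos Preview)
Your proof is correct and follows essentially the same route as the paper's: parametrise $\Bd{\rho}$ via the component-wise cosine, use $T_k(\cos w)=\cos(kw)$, bound $|\cos(a+ib)|\le e^{|b|}$, and finish with H\"older. The only cosmetic difference is that you carry the absolute values $|k_j y_j|$ explicitly, whereas the paper writes the bound as $e^{k_j\rho\vartheta_j}$ and relies on the supremum over $\vartheta$ to absorb the signs.
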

\begin{proof}
     Combining the definitions of the ellipses~$\mathcal
        E_\rho^{d,p}$~\eqref{eq:elb}, the norm on $\Hd{\rho}$~\eqref{eq:hdNorm},
        and the Chebyshev
        polynomials~$T_k$~\eqref{eq:TkDef2}, with the Holder inequality 
        $\sum k_j \vartheta_j \le \|k\|_q \cdot \|\vartheta\|_p$, 
        we obtain an upper bound
		\begin{equation}
            \| T_\k \|_{\Hd{\rho}} = \sup_{x, \vartheta \in \R^d, \|\vartheta\|_p \leq 1} 
        \prod_{j=1}^d |\cos (k_j (x_j + i \rho \vartheta_j))|
		\leq \sup_{\|\vartheta\|_p \leq 1} \prod_{j=1}^d e^{k_j \rho \vartheta_j}
		= e^{\rho \|\k\|_q}. \label{eq:ChebyshevPolynomialBounds}
		\end{equation}
          \end{proof}
    Together with Lemma~\ref{l:ModNEllQNorm}, this yields 
    \begin{corollary}
        \label{cor:|T|rho}
        $$
        \| T_{\modck_K \k} \|_{\Hd{\rho}} \leq e^{\rho \| \k \|_q}.
        $$
    \end{corollary}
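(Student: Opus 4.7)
The plan is to chain together the two preceding results. The corollary is essentially a bookkeeping exercise: Lemma~\ref{eq:|T|rho} already gives an $e^{\rho \|\cdot\|_q}$ bound on the Hardy norm of any Chebyshev tensor product $T_{\mathbf m}$, and Lemma~\ref{l:ModNEllQNorm} says that the folding operation $\modck_K$ is a contraction in every $\ell^q$ norm ($q \in [1,\infty]$). Composing the two should immediately yield the statement.

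Concretely, I would first apply Lemma~\ref{eq:|T|rho} to the index vector $\modck_K(\k) \in \N^d$ in place of $\k$. This gives
\[
\| T_{\modck_K(\k)} \|_{\Hd{\rho}} \leq e^{\rho \,\|\modck_K(\k)\|_q}.
\]
I would then invoke Lemma~\ref{l:ModNEllQNorm} to replace $\|\modck_K(\k)\|_q$ by the larger quantity $\|\k\|_q$ in the exponent; monotonicity of $t \mapsto e^{\rho t}$ (valid since $\rho \geq 0$) preserves the inequality and delivers the desired conclusion.

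There is no real obstacle here: the only minor point worth spelling out is that Lemma~\ref{eq:|T|rho} is stated for arbitrary $\k \in \mathbb R^d$, so applying it to the integer vector $\modck_K(\k)$ is legitimate, and the exponent $\rho$ being non-negative is what lets monotonicity go through. I would keep the proof to two short sentences, simply invoking the two lemmas in order.
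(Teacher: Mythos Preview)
Your proposal is correct and matches the paper's approach exactly: the paper simply states that the corollary follows from Lemma~\ref{eq:|T|rho} together with Lemma~\ref{l:ModNEllQNorm}, which is precisely the two-step chaining you describe.
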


    \begin{corollary}
        \label{cor:|phi|H}
       For all $\rho > 0$ and for all $\varphi \in \Hd[2,2]{\rho}$
       \[ \| \varphi \|_{\Hd[2,2]{\rho}} \leq \sum_{\k \in \N^2} e^{\rho \|
    \k\|_{\ell^2}} |\check \varphi_\k| \]
	whenever the right-hand side is finite 
    \end{corollary}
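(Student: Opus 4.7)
The plan is to represent $\varphi$ as an absolutely convergent Chebyshev series and then apply the triangle inequality in the supremum norm on $\overline{\mathcal E_\rho^{2,2}}$.

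First, I would introduce the (tensor-product) Chebyshev coefficients $\check\varphi_\k$ in the standard way so that one has the formal expansion $\varphi = \sum_{\k \in \N^2} \check\varphi_\k T_\k$. For $\varphi$ continuous on $[-1,1]^2$ these coefficients are given by the usual Chebyshev--Fourier integrals against the weight $(1-x_1^2)^{-1/2}(1-x_2^2)^{-1/2}$, building on the one-dimensional theory recalled from \cite{T12} and extended to $d=2$ by tensorisation. This step is purely book-keeping.

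Next, assuming that the right-hand side of the inequality is finite, I would invoke Lemma \ref{eq:|T|rho} specialised to $d=2$, $p=2$, so that the Holder conjugate is $q=2$ and $\|T_\k\|_{\Hd[2,2]{\rho}} \leq e^{\rho \|\k\|_{\ell^2}}$. Weierstrass $M$-test then shows that the formal Chebyshev series converges absolutely and uniformly on $\overline{\mathcal E_\rho^{2,2}}$ to some function that is continuous on the closure and analytic on the interior (the latter by Morera applied to the uniform limit of polynomials). Since this limit agrees with $\varphi$ on $[-1,1]^2$ by the classical one-dimensional Chebyshev convergence theorem applied coordinate-wise, and both functions are analytic on the connected set $\mathcal E_\rho^{2,2}$, they must coincide everywhere on $\overline{\mathcal E_\rho^{2,2}}$ by the identity principle.

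Having identified $\varphi$ with its Chebyshev series on $\overline{\mathcal E_\rho^{2,2}}$, I would conclude by taking the supremum norm and applying the triangle inequality:
\[
\|\varphi\|_{\Hd[2,2]{\rho}}
= \sup_{\z \in \overline{\mathcal E_\rho^{2,2}}} \Bigl|\sum_{\k \in \N^2} \check\varphi_\k\, T_\k(\z)\Bigr|
\leq \sum_{\k \in \N^2} |\check\varphi_\k|\cdot \|T_\k\|_{\Hd[2,2]{\rho}}
\leq \sum_{\k \in \N^2} |\check\varphi_\k|\, e^{\rho \|\k\|_{\ell^2}}.
\]
The main subtle point, rather than any computation, is the justification that the absolutely convergent Chebyshev series truly represents $\varphi$ on the full Bernstein polyellipse and not merely on $[-1,1]^2$; this is precisely where the analyticity built into the definition of $\Hd[2,2]{\rho}$ is used via the identity principle.
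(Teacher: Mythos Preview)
Your proof is correct and follows essentially the same approach as the paper: expand $\varphi$ in its Chebyshev series, bound each $\|T_\k\|_{\Hd[2,2]{\rho}}$ by $e^{\rho\|\k\|_{\ell^2}}$ via Lemma~\ref{eq:|T|rho}, and apply the triangle inequality. The paper's own argument is terser---it simply remarks that ``the convergence of this sum justifies the use of the Chebyshev series''---whereas you spell out that justification via the Weierstrass $M$-test and the identity principle, which is a welcome elaboration of exactly the point the paper leaves implicit.
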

    \begin{proof}
    We can formally expand $\varphi$ in the Chebyshev series $\varphi = \sum\limits_{\k \in \N^2} \check\varphi_\k T_\k$. We then have
	\[ \| \varphi \|_{\Hd[2,2]{\rho}} \leq \sum_{\k \in \N^2} | \check\varphi_\k
    | \| T_\k\|_{\Hd[2,2]{\rho}} \leq \sum_{\k \in \N^2} | \check\varphi_\k |
    e^{\rho \|\k\|_{\ell^2}}. \]
	where in the last inequality we used \eqref{eq:ChebyshevPolynomialBounds} on
    each term. The convergence of this sum justifies the use of the Chebyshev
    series.	 
\end{proof}

	\begin{proof}[Proof of Proposition \ref{p:ProjectionNorms}]
        By construction of $\Hd{R}$ a function $\varphi \in \Hd{R}$ is
        continuous on $[-1,1]^d$, and therefore its Chebyshev series 
        converges. More precisely, we define the Chebyshev coefficients by 
  		\begin{equation} 
            \Check \varphi_\k := \frac{t_\k}{\pi^d} \int_{[-1,1]^d}\frac{
            T_\k(\x) \varphi(\x)}{\prod_{j=1}^d\sqrt{1-x_j^2}} \d \x, \quad
        \mbox{ where } t_\k := \prod_{j=1}^d (2 - \delta^{k_j}_{ 0}). 
            \label{eq:ChebyCoeffs} 
        \end{equation}
        and the Chebyshev series expansion
		\begin{equation} 
            \varphi = \sum_{\k \in \N^d} \Check \varphi_\k T_\k. 
            \label{eq:ChebySeries} 
        \end{equation}
Applying Lemma~\ref{lem:h2} we get  
		$$
        (\idop - \P_K) \varphi = \sum_{\k \in \N^d} \Check \varphi_\k (T_\k - \P_K T_\k) = 
        \sum_{\|\k\|_\infty \geq K } \Check \varphi_\k (T_\k - c_{\k,K} T_{\modck_K  \k} )
        $$
		wherever \eqref{eq:ChebySeries} converges. Assuming the Chebyshev series
        of~$\varphi$ converges on $\Bd{\rho}$, we may write
		\begin{align}
            \| (\idop - \P_K) \varphi \|_{\Hd{\rho}} &\leq
            \sum_{\|\k\|_\infty \geq K} |\check \varphi_\k| \cdot (\| T_\k
            \|_{\Hd{\rho}} + \| T_{\modck_K \k} \|_{\Hd{\rho}}).
        \end{align} 
        Applying Corollary~\ref{cor:|T|rho} we get
		\begin{equation} 
            \| (\idop - \P_K) \varphi \|_{\Hd{\rho}} \leq
            2 \sum_{\|\k\|_\infty \geq K} |\check \varphi_\k| \cdot e^{\rho \| \k \|_q}.
            \label{eq:ProjErrorAsCoefs} 
        \end{equation} 
		We see that if the Chebyshev coefficients $\Check \varphi_\k$ decay faster
    than~$e^{-\rho \|\k\|_q}$, then $ \| (I - \P_K) \varphi \|_{\Hd{\rho}}$
    converges to zero as $K \to \infty$. 
		
    It only remains to estimate the coefficients. Recall that $T_{k_j}(x_j) = \cos(k_j
    \arccos x_j)$, and let us denote  $\varphi(\cos\btheta) := \varphi(\cos
    \theta_1,\cos\theta_2,\ldots,\cos\theta_d)$.
    We can rewrite Definition \eqref{eq:ChebyCoeffs} as an integral in an angle variable
    $\theta_j = \arccos x_j$: 
		\[  \check \varphi_\k = \frac{t_\k}{\pi^d} \int_{[0,\pi]^d}
    \left(\prod_{j=1}^d \cos k_j \theta_j\right)\,
    \varphi(\cos\btheta)\, \d\btheta. \] 

    Since the cosine is an even analytic $2\pi$-periodic function on the real line,
     we can write  
		\[  \check \varphi_\k = \frac{t_\k}{(2\pi)^d} \int_{[0,2\pi]^d}
    \left(\prod_{j=1}^d \cos k_j \theta_j\right)\,
    \varphi(\cos\btheta)\, \d\btheta. \] 
		Rewriting the cosine terms using $\cos (k_j \theta_j) = \frac12(e^{- i k_j
    \theta_j} + e^{i k_j \theta_j} )$, we get 
		\[  \check \varphi_\k = 2^{-d} \sum_{\sigma \in \{-1,1\}^d}
    \frac{t_\k}{(2\pi)^d} \int_{[0,2\pi]^d} e^{i \sum_j \sigma_j k_j \theta_j}\,
    \varphi(\cos\btheta)\, \d\btheta. \] 
		
    Note that $\varphi \in \Hd{R}$ is analytic on $\mathcal E_{R}^{d.p}$ and
    the composition $\varphi(\cos(\cdot))$ is analytic and $2\pi$-periodic on $\R^d +
    i\mathrm{Ball}^{d,p}_R$. Furthermore on this multidimensional strip
    $\sup |\varphi(\cos(\cdot))| = \| \varphi \|_{\Hd{R}}$ and
    thus we can make the exponential terms small by
    moving the surface of integration. 
    
    Let us define a translation vector by $\boldsymbol{\kappa} \in \mathbb R^d$, $\|\boldsymbol\kappa\|_p = 1$ by 
        \[\kappa_j = \begin{cases}\sigma_j \cdot (\sgn k_j) \cdot
            \left|k_j\right|\rule{0pt}{12pt}^{\frac1{p-1}} \cdot
            \|\k\|^{-\frac{1}{p-1}}_q,& p > 1\\
		 \sigma_j \cdot ( \sgn k_j), & p = 1;\end{cases} \qquad j = 1,2, \ldots, d.
     \] 
    We want to
    translate our surface of integration by $\boldsymbol \kappa R$ in imaginary
    direction
    \begin{equation}
    \check \varphi_\k = 2^{-d} \sum_{\sigma \in \{-1,1\}^d}
    \frac{t_\k}{(2\pi)^d} \int_{[0,2\pi]^d} e^{i \sum_j \sigma_j k_j (\theta_j +
    i \kappa_j R)}\, \varphi(\cos(\btheta + i\boldsymbol{\kappa}R))\,
    \d\btheta. 
    \label{eq:Tphi2}
    \end{equation}
		We can compute, using the choice of~$\kappa_j$, 
		\begin{align*} 
      \left| e^{i \sum_j \sigma_j k_j (\theta_j + i \kappa_j R)} \right| &=
      \left|e^{ - \sum_j \sigma_j k_j \kappa_j R }\right| \\&= 
      \exp\left(-R \cdot  \|\k\|_q^{-\frac{1}{p-1}}\sum_{j=1}^d
      |k_j|^{\frac1{p-1}+1} \right) \\&
      = \exp\left(-R\cdot \|\k\|_q^{q-\frac{1}{p-1}}\right) = \exp(-
      R\|\k\|_q).
  \end{align*}
    Therefore we get an upper bound on $|\check \varphi_k|$ from~\eqref{eq:Tphi2} 
    \begin{align} 
      |\check \varphi_\k| \leq 2^{-d} \sum_{\sigma \in \{-1,1\}^d}
    \frac{t_\k}{(2\pi)^d} \int_{[0,2\pi]^d}  \exp(- R\|\k\|_q) \cdot  \|\varphi\|_{\Hd{R}}
      = t_\k e^{- R \| \k\|_q} \|\varphi\|_{\Hd{R}}. 
      \label{eq:ChebyCoeffBound} 
    \end{align} 
    Substituting this into \eqref{eq:ProjErrorAsCoefs} and taking into account
    $|t_\k |< 2^d$ we get
		\begin{align*} \| (\idop - \P_K) \varphi \|_{\Hd{\rho}}   &\leq
            \sum_{\|\k\|_\infty \geq K} 2^d \cdot 2e^{-(R-\rho) \| \k\|_q} \|\varphi\|_{\Hd{R}}
		\leq \sum_{\|\k\|_\infty \geq K} 2^{d+1} e^{-(R-\rho) \| \k\|_\infty}\|\varphi\|_{\Hd{R}}\\
		&= 2^{d+1} \|\varphi\|_{\Hd{R}}\sum_{\hat k = K}^\infty \# \{\k \in \mathbb N^d \mid
        \|\k\|_\infty = \hat k \}\cdot e^{-(R-\rho) \hat k}.
	 \end{align*}
	All that remains is to obtain a bound on the sum. Counting vertices in the
    hypercube we get $\# \{\k \in \mathbb N^d \mid \|\k\|_\infty = 
    \hat k \} = (\hat k+1)^d - \hat k^d$.  When $d=1$ this just leads to a geometric series.
    Otherwise by straightforward computation, since $d \in \mathbb N$ and $d \ge 2$
	\begin{align} 
	\sum_{\hat k = K}^\infty ((\hat k+1)^d - \hat k^d) e^{- (R-\rho) \hat k}
    &= \sum_{\hat k=K}^\infty \int_{\hat k}^{\hat k+1} d \cdot u^{d-1} e^{-
    (R-\rho) \hat k}\, \d u
    \leq \sum_{\hat k=K}^\infty \int_{\hat k}^{\hat k+1} d \cdot u^{d-1} e^{-
    (R-\rho)
    (u-1) }\, \d u \notag \\ &
    = d \cdot e^{R-\rho} (R-\rho)^{1-d}  \int_{K}^\infty ((R-\rho)u)^{d-1}
    e^{-(R-\rho) u}
    \d u \notag  \\ &
    = d \cdot e^{R-\rho} (R-\rho)^{-d}  \int_{K(R-\rho)}^\infty t^{d-1} e^{-t}
    \d t \notag \\ &
   	= d \cdot e^{R-\rho} (R-\rho)^{-d} \cdot \Gamma(d;K(R-\rho)).
    \label{eq:sumub0} 
	\end{align}
    It remains to get an upper bound on the incomplete Gamma function. Recall that
    for $d=2$ we get $\Gamma(2;K(R-\rho)) = e^{-K(R-\rho)}(1+K(R-\rho))$ and
    for $d = 3$ we have $\Gamma(3;K(R-\rho)) = e^{-K(R-\rho)}(2+ 2 K(R-\rho) + 
    K^2 (R-\rho)^2)$. 

    Therefore, in the case when $d=2$,
	$$
    \sum_{\hat k = K}^\infty ((\hat k+1)^2 - \hat k^2) e^{- (R-\rho) \hat k} \le 
    2 \cdot e^{-(K-1)(R-\rho)}\frac{1+K(R-\rho)}{(R-\rho)^{2}} 
    $$
    and thus 
    $$
    \| (\idop - \P_K) \varphi \|_{\mathcal H^{2,p}_\rho} \leq 
    16 \cdot e^{-(K-1)(R-\rho)}\frac{1+K(R-\rho)}{(R-\rho)^{2}}\|\varphi\|_{\Hd{R}}. 
    $$

    Similarly, if $d=3$, then 
	$$
    \sum_{\hat k = K}^\infty ((\hat k+1)^2 - \hat k^2) e^{- (R-\rho) \hat k} \le 
    3 \cdot e^{-(K-1)(R-\rho)}\frac{2+ 2 K(R-\rho) + 
    K^2 (R-\rho)^2}{(R-\rho)^{3}} 
    $$
    and hence
    $$
    \| (\idop - \P_K) \varphi \|_{\mathcal H^{3,p}_\rho} \leq 
    48 \cdot  
    e^{-(K-1)(R-\rho)}\frac{2+ 2 K(R-\rho) + K^2 (R-\rho)^2}{(R-\rho)^{3}}
    \|\varphi\|_{\Hd{R}}.
    $$
    For $d \ge 4$ we can use the bounds by Pinelis~\cite{P20}. 
    In particular, \cite[Theorem 1.1]{P20} gives 
    $$
    \Gamma(d,K(R-\rho)) \le G_d(K (R-\rho)),
    $$
    where
    $$
    G_d(x) : = \frac{(x+ \gamma)^d - x^d}{d \gamma} e^{-x} \ \mbox{ with } \
    \gamma := \sqrt[d-1]{\Gamma(d+1)}.
    $$
    By the Stirling's inequality, 
    $$
    \sqrt{2\pi d} \cdot d^d \cdot e^{\frac1{12d+1}-d} < 
    \Gamma(d+1) = d! < \sqrt{2\pi d} \cdot d^d \cdot e^{\frac1{12d}-d}
    $$ 
    therefore $\frac{\sqrt{2\pi}}{e^{2}} d \le \gamma \le
    \frac{\sqrt{2\pi}}e d$ and hence provided $d < K(R-\rho)$ 
    \begin{multline*}
    \Gamma(d,K(R-\rho)) \le
    \frac{1}{\sqrt{2\pi}d^2}\Bigl(\Bigl(K(R-\rho)+\frac{\sqrt{2\pi}}{e}d\Bigr)^d -
    (K(R-\rho))^d \Bigr) e^{2-K(R-\rho)} \\ \le
    d K^{d-1} (R-\rho)^{d-1} e^{1-K(R-\rho)}.
    \end{multline*}
    Substituting into~\eqref{eq:sumub0} we get
    \begin{align*}
	\sum_{\hat k = K}^\infty ((\hat k+1)^d - \hat k^d) e^{- (R-\rho) \hat k} &\le
    d  e^{R-\rho} (R-\rho)^{-d} \cdot     d K^{d-1} (R-\rho)^{d-1} e^{1-K(R-\rho)} \\&= 
    \frac{d^2 K^{d-1}}{R-\rho}  \cdot e^{1-(K-1)(R-\rho)}
    \end{align*}
    Summing up for $d \ge 4 $ we have a bound
	\[  
    \| (\idop - \P_K) \varphi \|_{\Hd{\rho}} \leq 
    \frac{2^d d^2 }{R-\rho}  K^{d-1} e^{1-(K-1)(R-\rho)}
    \|\varphi\|_{\Hd{R}}.
    \]
	as required.
	
	Finally, the~$O(e^{-R\|\k\|})$ decay of Chebyshev coefficients we obtain
    in~\eqref{eq:ChebyCoeffBound} implies uniform convergence of the Chebyshev 
    series on~$\Bd{\rho}$, justifying its validity. 
	\end{proof}

\section{Min-max method}\label{s:minmax}
 
In this section we explain how to use the finite rank
operator $\P_K \L$ to get estimates on the spectral radius of~$\mathcal
L$. More precisely, we take an estimate $\varphi$ of the leading eigenvector of~$\P_K \L |_{\im
\P_K}$ to construct a function 
to which we can apply the min-max method of~\cite{PV22}, modified to use our
rigorous Chebyshev--Lagrange discretisation results. 

We will prove two theorems in this section. Theorem~\ref{thm:PosOp} is a simple
modification of \cite[Lemma~3.1]{PV22}, which we do not use explicitly, but it
illustrates the way our method works and may be useful in other settings.
Theorem~\ref{t:PositiveOperator2} adds to this idea an a priori bound on
$\tfrac{\d \L_s}{\d s}$ (and hence the derivative of the spectral radius),
meaning that an upper and lower bound for the dimension that can be obtained using only
one evaluation of the transfer operator per step, as opposed to two, used in bisection
method. 

Note that in Definition~\ref{eq:hdNorm} of the norm on $\mathcal H_r^{p,d}$
we may set $r=0$, which corresponds to the supremum norm on the cube
$$
\|\varphi\|_{\mathcal H_0^{d,p} } = \sup_{\mathbf x \in [-1,1]^d}
|\varphi(\mathbf x)|. 
$$

In what follows, $\rho(\L)$ stands for the spectral radius of an operator~$\L$
on~$\Hd{r}$.
	\begin{theorem}\label{thm:PosOp}
	Suppose that for some $r > 0$, the transfer operator $\L$ given
        by~\eqref{eq:tropgen} has a spectral gap in $\Hd{r}$ and preserves positive functions on $[-1,1]^d$.
        Let $\lambda > 0$ be given. If there exist a function $\varphi \in
        \Hd{r}$ positive on $[-1,1]^d$ and a number~$K$ such that 
		\begin{enumerate}[(a)]
			\item 
			If $\sup_{\x \in [-1,1]^d} (\P_K \L \varphi - \lambda \varphi)(\x) \leq
            - \| \L \varphi- \P_K \L \varphi\|_{\Hd{0}}$,
			then $\rho(\L) \leq \lambda$; \label{thm:PosOp:partA}
			\item 
			If $\inf_{\x \in [-1,1]^d} (\P_K \L \varphi - \lambda \varphi)(\x) \geq
            \| \L \varphi- \P_K \L \varphi\|_{\Hd{0}}            $,
			then $\rho(\L) \geq \lambda$. \label{thm:PosOp:partB} 
		\end{enumerate}
	\end{theorem}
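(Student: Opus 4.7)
The approach is the classical Perron--Frobenius / Collatz--Wielandt inequality, reinterpreted so that the computable quantity is $\P_K \L \varphi$ rather than $\L \varphi$ itself; the $\Hd{0}$-norm error term is exactly what is needed to convert a pointwise bound on $\P_K \L \varphi - \lambda \varphi$ into one on $\L \varphi - \lambda \varphi$.

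First I would reduce both parts to pointwise inequalities between $\L\varphi$ and $\lambda\varphi$ on $[-1,1]^d$. Decomposing
\[
\L\varphi - \lambda\varphi = (\P_K\L\varphi - \lambda\varphi) + (\L\varphi - \P_K\L\varphi)
\]
and observing that on the real cube $[-1,1]^d$ all quantities are real-valued (by the definition of $\Hd{r}$), the pointwise estimate $|(\L\varphi - \P_K\L\varphi)(\x)| \leq \|\L\varphi - \P_K\L\varphi\|_{\Hd{0}}$ combined with hypothesis (a) yields $(\L\varphi - \lambda\varphi)(\x) \leq 0$ on $[-1,1]^d$, i.e.\ $\L\varphi \leq \lambda\varphi$ there. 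Hypothesis (b) symmetrically gives $\L\varphi \geq \lambda\varphi$.

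Next I would iterate using positivity of $\L$: if $\L\varphi \leq \lambda\varphi$ and $\L$ preserves positive functions on $[-1,1]^d$, then by induction $\L^n\varphi \leq \lambda^n\varphi$ pointwise on $[-1,1]^d$ for all $n \in \N$. The spectral gap assumption together with positivity of $\L$ (a Krein--Rutman/Ruelle situation, exactly as in Proposition~\ref{p:SpecRadIsDimension}) supplies a strictly positive leading eigenfunction $\psi \in \Hd{r}$ with $\L\psi = \rho(\L)\psi$. Since both $\psi$ and $\varphi$ are continuous and strictly positive on the compact set $[-1,1]^d$, there is a constant $M > 0$ with $\psi \leq M\varphi$ on $[-1,1]^d$. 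Hence
\[
\rho(\L)^n \psi(\x) = (\L^n\psi)(\x) \leq M(\L^n\varphi)(\x) \leq M\lambda^n \varphi(\x), \qquad \x \in [-1,1]^d,
\]
and choosing any $\x$ with $\psi(\x) > 0$ and taking $n$-th roots gives $\rho(\L) \leq \lambda$, proving part (a). Part (b) is the mirror argument: $\L^n\varphi \geq \lambda^n\varphi$ together with a lower bound $\varphi \geq m\psi$ gives $\lambda^n m \psi \leq \L^n\varphi$; pairing with any positive linear functional nontrivial on $\psi$ (for instance the Perron measure $\mu$ with $\L^*\mu = \rho(\L)\mu$), one obtains $\lambda^n m \langle \psi,\mu\rangle \leq \rho(\L)^n \langle \varphi,\mu\rangle$, and the $n$-th root yields $\lambda \leq \rho(\L)$.

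The main obstacle is not the estimation itself but justifying that the spectral gap plus positivity of $\L$ on $\Hd{r}$ indeed produces a strictly positive leading eigenfunction (and, for part (b), a strictly positive dual eigenmeasure) on the compact set $[-1,1]^d$. In general this requires a Krein--Rutman type argument, but in our setting it is already built into the Mauldin--Urbanski framework invoked in Proposition~\ref{p:SpecRadIsDimension}, so I would cite that rather than reprove it. Everything else is routine: the key creative content is the observation that the correct error budget for passing from the finite-rank operator $\P_K\L$ to $\L$ is simply the $\Hd{0}$-norm, which is tight because the pointwise inequality only needs to hold on the real cube.
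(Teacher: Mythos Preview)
Your proposal is correct and follows essentially the same line as the paper's proof: reduce the hypothesis to a pointwise inequality $\L\varphi \leq \lambda\varphi$ (resp.\ $\geq$) on $[-1,1]^d$ via the decomposition $\L\varphi - \lambda\varphi = (\P_K\L\varphi - \lambda\varphi) + (\L\varphi - \P_K\L\varphi)$, iterate using positivity of $\L$, and compare with the positive leading eigenfunction supplied by the spectral gap. The only minor difference is that for part~(b) you invoke the dual Perron measure, whereas the paper simply says the argument is analogous; in fact the direct mirror of your part~(a) argument (bounding $\varphi \leq C\psi$ and using $\lambda^n\varphi \leq \L^n\varphi \leq C\rho(\L)^n\psi$) already suffices without the dual functional, so your extra ingredient is harmless but not needed.
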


    \begin{proof}
    The argument below essentially repeats the proof of~\cite[Lemma~3.1]{PV22}. 
    Assume that \hyperref[thm:PosOp:partA]{\ref{thm:PosOp:partA}} holds. We may
    write
    \begin{multline*}
     \sup_{\x \in [-1,1]^d} (\L \varphi - \lambda \varphi)(\x)  \leq \sup_{\x
    \in [-1,1]^d} (\P_K \L \varphi - \lambda \varphi)(\x) + \sup_{\x \in
    [-1,1]^d} (\L \varphi - \P_K \L \varphi )(\x) \\  
    \leq - \|\P_K \L \varphi - \lambda \varphi\|_{\mathcal H_0^{d,p}} + 
    \sup_{\x \in [-1,1]^d} (\L \varphi - \P_K \L \varphi )(\x) \le 0.
    \end{multline*}
    Thus, $(\L \varphi)(\x) \leq \lambda \varphi(\x)$ for every $\x \in
    [-1,1]^d$.  Since $\L$ is positive, iterating the inequality we get $(\L^n \varphi)(\x) \leq
    \lambda^n \varphi(\x)$. 
    Since $\mathcal L$ has a spectral gap, there exists a positive eigenfunction
    $h \leq \varphi$ corresponding to $\rho(\mathcal L_s)$. Therefore we have
     $\rho(\L) = \limsup_{n\to\infty} (\L^n h(\x))^{1/n} \leq \limsup_{n\to\infty} (\L^n \varphi(\x))^{1/n} \leq \lambda$. 

    The proof of~\hyperref[thm:PosOp:partB]{\ref{thm:PosOp:partB}} is completely  analogous. 
    \end{proof}

In practice (see Section~\ref{s:algorithm}), we shall choose $\varphi$ to be an
informed but non-rigorous best guess of the top eigenvector of $\P_K \L$, for
instance via a non-validated computation. This best guess will be of a reasonable norm in
some appropriate Hardy space $\Hd{r}$, and so we can apply Theorem~\ref{t:Cheby}
to estimate the norm $\| \L \varphi- \P_K \L \varphi\|_{\Hd{0}}$. 

However in our considerations we are not really interested in computing the
leading eigenvalue for the Apollonian transfer operator $\mathcal A_s$, but rather in finding
a parameter value $s$ for which \emph{the spectral radius} of the  transfer
operator~\eqref{eq:ApTrOp} has a given value $\rho(\mathcal A_s) = 1$. At first sight, the above method
would suggest that $\P_K \mathcal A_s \varphi$ should be computed for at least two different values of~$s$ 
(for an upper and lower bound). 

Fortunately, in dimension estimation problems for uniformly contracting IFS such as
the induced system we consider here, we can often exploit monotonicity of the positive
operators with respect to the parameter. 

To formalise the idea let us introduce a parameter into the generic transfer
operator~\eqref{eq:tropgen} by taking weight
functions~$w_{j,s}$, that depend differentiably on the parameter $s \in (a,b)$:
\begin{equation}
    \label{eq:tropPar}
\mathcal L_s \varphi := \sum_j w_{j,s} \cdot (\varphi \circ v_j)
\end{equation}
In this case, for $s>s'$ for every $x \in [-1,1]^d$ we can bound $\L_s \varphi(x) \leq \L_s' \varphi(x) +
c(s - s') \varphi(x)$ for some constant $c$, and vice versa. 
  
The next Theorem allows us to make additional estimates so that at the
verification step (see \S\ref{s:algorithm}) we only need to compute $\mathcal P_K
\mathcal L_s \varphi$ for a single value of~$s$.  
%
The resulting bounds will be looser
than optimal by a constant factor, which is tolerable given that we are saving a
factor of at least two in computational expense (which in turn allows us to
increase $N$, $K$, etc. and obtain $10$--$15\%$ more digits in our final
estimate). 

	\begin{theorem}\label{t:PositiveOperator2}
    Suppose that there exist $r>0$ such that for all $s \in [a,b]$ the parameterised transfer
    operator~\eqref{eq:tropPar} 
    has a spectral gap in $\Hd{r}$ and preserves positive functions on $[-1,1]^d$. 
    Assume additionally that the operator 
    $$
    -\tfrac{\d}{\d s} \mathcal L_s \varphi := - \sum_j (\tfrac{\d}{\d s} w_{j,s} ) \cdot (\varphi \circ v_j)
    $$
   is also a positive operator with the property that there exist constants
   $D^{\pm}>0$ such that for every bounded $\psi > 0$ and all $\x \in [-1,1]^d$ 
	\begin{equation} -D^- \sup \psi \leq  \tfrac{\d}{\d s} (\L_s \psi)(\x) \leq
        -D^+ \inf \psi < 0.
        \label{eq:LinearResponse}
    \end{equation} 
    Let the projection operator $\P_K$ and parameter $s_0 \in [a,b]$ be given. 
    Assume that a function $\varphi \in \Hd{r}$, bounded on $[-1,1]^d$ with $ 0 <\varphi^- \leq \varphi \leq
    \varphi^+$ for some $\varphi^{\pm} \in \mathbb R^+$ and a number $\lambda >0$  
    satisfies the double inequality $ \varepsilon^- < ( \P_K \L_{s_0} \varphi - \lambda \varphi)(\x)
    < \varepsilon^+$ for some $\varepsilon^{\pm} \in \mathbb R$ and for all $\x
    \in [-1,1]^d$.     Define
    $\mathrm{err}$ to be the approximation error $\mathrm{err}: =  \| \L_{s_0} \varphi-
    \P_K \L_{s_0} \varphi\|_{\Hd{0}}$. 

    Then the unique parameter value~$s_* \in [a,b]$ for which
    $\rho(\L_{s_*}) = \lambda$
    satisfies the double inequality	
    \[ s_0 + \frac{\varepsilon^- - \mathrm{err}}{D^-  \varphi^+} <
    s_* < s_0 + \frac{\varepsilon^+ + \mathrm{err}}{D^+  \varphi^-}. \]
 \end{theorem}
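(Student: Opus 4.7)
The plan is to combine strict monotonicity of $s \mapsto \rho(\L_s)$ with pointwise bounds on $\L_{s_0}\varphi - \lambda\varphi$ derived from the hypothesis and the approximation error, and then apply the positivity argument from Theorem~\ref{thm:PosOp} to bracket the unique $s_*$ by an interval around $s_0$ whose width scales with $\varepsilon^\pm$ and $\mathrm{err}$.

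First I would establish that $s \mapsto \rho(\L_s)$ is continuous and strictly monotone decreasing on $[a,b]$, which gives existence and uniqueness of $s_*$. Since $-\tfrac{\d}{\d s}\L_s$ is positive with $\tfrac{\d}{\d s}\L_s\psi \leq -D^+\inf\psi < 0$ for every bounded positive $\psi$, integrating~\eqref{eq:LinearResponse} shows $\L_s\psi < \L_{s'}\psi$ pointwise whenever $s > s'$. Applying this to the positive top eigenfunction $h_{s'}$ of $\L_{s'}$ (which exists by the spectral-gap hypothesis) and iterating as in the proof of Theorem~\ref{thm:PosOp} yields $\rho(\L_s) < \rho(\L_{s'})$.

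Combining $\varepsilon^- < (\P_K\L_{s_0}\varphi - \lambda\varphi)(\x) < \varepsilon^+$ with $\|\L_{s_0}\varphi - \P_K\L_{s_0}\varphi\|_{\Hd{0}} \le \mathrm{err}$ yields the pointwise bracket $\varepsilon^- - \mathrm{err} < (\L_{s_0}\varphi - \lambda\varphi)(\x) < \varepsilon^+ + \mathrm{err}$ on $[-1,1]^d$. For the upper bound on $s_*$, set $s_+ := s_0 + (\varepsilon^+ + \mathrm{err})/(D^+\varphi^-)$ and integrate $\tfrac{\d}{\d t}\L_t\varphi \leq -D^+\inf\varphi \leq -D^+\varphi^-$ over $[s_0,s_+]$ to get $\L_{s_+}\varphi \leq \L_{s_0}\varphi - (s_+-s_0)D^+\varphi^- < \lambda\varphi$ pointwise. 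The argument of Theorem~\ref{thm:PosOp}\ref{thm:PosOp:partA} then yields $\rho(\L_{s_+}) \leq \lambda$; combined with strict monotonicity, the strict pointwise inequality rules out $s_* = s_+$, so $s_* < s_+$. Symmetrically, set $s_- := s_0 + (\varepsilon^- - \mathrm{err})/(D^-\varphi^+)$, use the lower derivative bound $\tfrac{\d}{\d t}\L_t\varphi \geq -D^-\sup\varphi \geq -D^-\varphi^+$ integrated over the (signed) interval between $s_0$ and $s_-$ to conclude $\L_{s_-}\varphi > \lambda\varphi$ pointwise, and apply Theorem~\ref{thm:PosOp}\ref{thm:PosOp:partB} together with monotonicity to deduce $s_* > s_-$.

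The main subtlety is Step~1: one must lift the pointwise inequality on the real cube $[-1,1]^d$ to a strict spectral inequality on the Hardy space $\Hd{r}$, which requires both the spectral gap and the positivity of $\L_s$, via the rescaling $h \leq \varphi$ trick used in the proof of Theorem~\ref{thm:PosOp}. A secondary care point is the asymmetric pairing of $D^+$ with $\varphi^-$ (the tight side of the derivative bound) and $D^-$ with $\varphi^+$ (the loose side); this reflects the asymmetry between $\inf\psi$ and $\sup\psi$ in~\eqref{eq:LinearResponse} and produces bounds loose by a bounded constant factor, which, as the authors note, is acceptable for overall algorithmic efficiency.
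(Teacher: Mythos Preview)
Your proposal is correct and follows essentially the same route as the paper: establish monotonicity of $s\mapsto\rho(\L_s)$ from~\eqref{eq:LinearResponse}, integrate the derivative bound to pass from $\L_{s_0}\varphi$ to $\L_{s_\pm}\varphi$, and then invoke the positivity argument of Theorem~\ref{thm:PosOp} to compare $\rho(\L_{s_\pm})$ with $\lambda$. Your treatment of the strict versus non-strict inequalities (using strict monotonicity to upgrade $\rho(\L_{s_+})\le\lambda$ to $s_*<s_+$) is in fact slightly more careful than the paper's own presentation.
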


\begin{proof}
    The inequality~\eqref{eq:LinearResponse} implies that the spectral radius is
    decreasing monotonically in~$s$. Indeed, differentiating  $\mathcal L_s 
    \psi = \mu_s \psi $ and taking into account $\psi > 0$ we
    conclude that $\frac{ \d}{\d s} \mu_s < 0$. 

    It follows from Theorem~\ref{thm:PosOp} that if for $\mu \in \mathbb R$ there exists a positive
    function~$\psi \in \Hd{r}$ such that $(\mathcal{L}_s \psi)(\x) < \mu
    \psi(\x)$ for all $\x \in [-1,1]^d$, then $\rho(\mathcal L_s) < \mu$.
    Hence if for some $s^\prime$ we have that $\rho(\mathcal L_{s^\prime}) =
    \mu > \rho(\L_s)$ then $s^\prime < s$. 

    Therefore in order to show that $s_* < s_0 + \frac{\varepsilon^+ +
    \mathrm{err}}{D^+  \varphi^-}$ it is sufficient to show that for 
    \[s_+ := s_0 + \frac{\varepsilon^+ + \mathrm{err}}
    {D^+ \varphi^-} \] 
    we have $\rho(\L_{s_+}) < \lambda$, or $\mathcal L_{s_+} \varphi (\x) < \lambda \varphi (\x) $ for all $\x
    \in [-1,1]^d$. 

    Integrating \eqref{eq:LinearResponse} with $\psi = \varphi$ we
    have that \[ (\L_s \varphi)(\x) - (\L_{s_0} \varphi)(\x) = \int_{s_0}^s
    \tfrac{\d}{\d t} (\L_{t} \varphi)(\x)\,\d t \leq \int_{s_0}^s
    -D^+ \varphi^- \,\d  t, \] 
	so for $s > s_0$,
    \begin{equation}
        \label{eq:trop2eq1}
	(\L_s \varphi)(\x) - (\L_{s_0} \varphi)(\x) \leq -D^+ \varphi^- (s - s_0). 
    \end{equation}
	Consequently, if we set  
    $s_+ = s_0 + \frac{\varepsilon^+ + \mathrm{err}}
    {D^+ \varphi^-} > s_0$, 
    we have by~\eqref{eq:trop2eq1}
	\begin{align*} 
        (\L_{s_+} \varphi)(\x) &- \lambda \varphi(\x) \leq (\L_{s_0}
        \varphi)(\x)-D^+  \varphi^- (s_+ - s_0)- \lambda \varphi(\x)\\
        &  =((\L_{s_0} \varphi)(\x) - (P_K \L_{s_0} \varphi)(\x)) + ( (P_K
        \L_{s_0} \varphi)(\x) - \lambda \varphi(\x) ) - D^+  \varphi^-
      \frac{\varepsilon^+ + \mathrm{err} }{D^+  \varphi^-}\\ 
      & \leq \mathrm{err} + \varepsilon^+ - D^+  \varphi^- \frac{\varepsilon^+ + \mathrm{err} }{D^+  \varphi^-}
       \leq 0. 
	\end{align*}
    The lower bound proceeds similarly. 
\end{proof}

In Theorems~\ref{thm:PosOp} and \ref{t:PositiveOperator2}, we need to compute
suprema and infima of polynomials. The following results make obtaining
reasonable bounds very easy: 
\begin{proposition}\label{p:BoundByCheby}
	Suppose $\varphi = \sum_{\k \in \N^d} \check{\varphi}_\k T_\k \in \Hd{\rho}$
    for some $\rho > 0$ (for example, if $\varphi$ is a polynomial). Then  
	\[ \sup_{\x \in [-1,1]^d} \varphi(\x) \leq \check{\varphi}_{\boldsymbol{0}}
    + \sum_{\k \in \N^d \backslash \{0\}} |\check{\varphi}_\k| \] 
	and
		\[ \inf_{\x \in [-1,1]^d} \varphi(\x) \geq \check{\varphi}_{\boldsymbol{0}} - \sum_{\k \in \N^d \backslash \{0\}} |\check{\varphi}_\k| \]
\end{proposition}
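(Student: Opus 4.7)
The plan is to exploit two well-known facts about Chebyshev polynomials and combine them with the decay estimate on Chebyshev coefficients already established in the proof of Proposition~\ref{p:ProjectionNorms}.

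First, I would recall from the definition~\eqref{eq:TkDef1} that $|T_k(x)| = |\cos(k \arccos x)| \leq 1$ for every $x \in [-1,1]$, hence by the tensor product structure~\eqref{eq:TkDef2},
\[ \sup_{\x \in [-1,1]^d} |T_\k(\x)| \leq 1 \quad \text{for all } \k \in \N^d. \]
Second, since $\varphi \in \Hd{\rho}$ with $\rho>0$, the bound~\eqref{eq:ChebyCoeffBound} in the proof of Proposition~\ref{p:ProjectionNorms} gives $|\check\varphi_\k| \leq t_\k e^{-\rho\|\k\|_q}\|\varphi\|_{\Hd{\rho}}$, so the Chebyshev series $\sum_\k \check\varphi_\k T_\k$ converges absolutely and uniformly on $[-1,1]^d$, justifying termwise manipulations.

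Next, for any fixed $\x \in [-1,1]^d$, I would split off the constant term and estimate
\[ \varphi(\x) = \check\varphi_{\boldsymbol{0}} + \sum_{\k \in \N^d \setminus \{0\}} \check\varphi_\k T_\k(\x) \leq \check\varphi_{\boldsymbol{0}} + \sum_{\k \in \N^d \setminus \{0\}} |\check\varphi_\k|\, |T_\k(\x)| \leq \check\varphi_{\boldsymbol{0}} + \sum_{\k \in \N^d \setminus \{0\}} |\check\varphi_\k|. \]
Taking the supremum over $\x$ on the left-hand side yields the upper bound. For the lower bound, the same reasoning gives $\varphi(\x) \geq \check\varphi_{\boldsymbol{0}} - \sum_{\k \neq 0} |\check\varphi_\k|$, and taking the infimum yields the second inequality.

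There is essentially no obstacle here: the statement reduces to the triangle inequality combined with the uniform bound $\|T_\k\|_\infty \leq 1$ on the cube. The only subtlety worth flagging is the convergence of the Chebyshev expansion, which is where the hypothesis $\varphi \in \Hd{\rho}$ (with $\rho>0$, or polynomiality) enters; in the polynomial case the sum is finite and no analytic hypothesis is needed.
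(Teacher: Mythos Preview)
Your proof is correct and follows essentially the same approach as the paper: invoke convergence of the Chebyshev series via the coefficient decay from Proposition~\ref{p:ProjectionNorms}, then use $T_{\boldsymbol{0}} = 1$ and $|T_\k| \leq 1$ on $[-1,1]^d$ together with the triangle inequality. The paper's version is simply more terse.
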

\begin{proof}
	Because $\varphi \in \Hd{\rho}$, by Proposition \ref{p:ProjectionNorms} we
    know that the sum of Chebyshev coefficients converges uniformly on
    $[-1,1]^d$. Then the result holds because $T_{\boldsymbol{0}} = 1$ and
    $|T_{\k}| \leq 1$ on $[-1,1]^d$ for all $\k \in \N^d$. 
\end{proof}

Assume now that we computed non-rigorously a value $s_0$ and function $\varphi$ so that
$\varphi$ approximates an eigenfunction of $\L_{s_0}$, and the eigenvalue of
$\L_{s_0}$ is very close to $1$ (i.e. zero pressure), we will then rigorously
compute $\P_K \L_{s_0} \varphi - \varphi$, and apply
Proposition~\ref{p:BoundByCheby} to obtain rigorous upper and lower bounds on
$\P_K \L_{s_0} \varphi - \varphi$. 
With this information, Theorem~\ref{t:PositiveOperator2} then provides both an
upper bound $s_+$ and lower bound $s_-$ to rigorously enclose the true dimension
$s \in [s_-, s_+]$, giving us our main theorem (Theorem~\ref{t:main}). 

Given $s \in (1.2,1.4)$ and $K$ we obtain a suitable approximation $\varphi$ to the
eigenfunction of the Apollonian transfer operator~$\mathcal A_{s}$ from the eigenvector of the matrix $P_K
\mathcal A_{s}\bigl|_{\mathtt{im} P_k}$. It is therefore necessary to be able compute the 
matrix itself effectively and efficiently. This means we need to evaluate $\mathcal A_{s}
\ell_{\k,K}(x_{\mathbf j,K})$ for $\k \in \{(k_1,k_2) \mid k_1, k_2 \le K\}$,
where $\ell_{k,K}$ are the  Chebyshev--Lagrange polynomials~\eqref{eq:LagrangePoly} and $x_{\mathbf
j,K}$ are Chebyshev nodes~\eqref{eq:ChebyshevNodes}. In the next section we
explain how to evaluate $\mathcal A_{s} \varphi (x)$ given an analytic function~$\varphi$. 

\section{The Euler--Maclaurin formula}
\label{s:pointapprox}

A major obstacle to accurate computation of the Hausdorff dimension of the
Apollonian circle packing is the fact that the transfer operator~$\mathcal{A}_s$
 defined by~\eqref{eq:ApTrOp} as an infinite sum with the tail 
that decays at a relatively slow polynomial rate (since $J_n(x,y) = O(n^{-2})$ as
$n\to\infty$). However, the analyticity of $J_n^\pm$ and $G_n^\pm$ with respect to $n$
allows us to apply a kind of sum acceleration, the so-called
Euler--Maclaurin formula. 

The idea is as follows. We will aim to approximate the infinite sum \[\sum_{n=0}^\infty \psi(n).\]
We first choose~$N>0$ and compute the sum of the terms exactly up to $n = N-1$. Then, we want to
approximate the tail by an integral: 
\begin{equation}
    \label{eq:EMIntegral}
\sum_{n=N}^\infty \psi_n \simeq I_\psi(N) := \int_N^\infty \psi(t)\,\d t.
\end{equation}
There is obviously a substantial error here. We can iteratively correct the
error, let us say with $L+1$ terms for $L\geq 0$. We choose the first to be $\tfrac{1}{2}
\psi(N)$; the rest are odd derivatives of $\psi$ at $N$, with  
coefficients coming from Bernoulli numbers~$B_l$ 
\begin{equation}
    \label{eq:EMSumOfDerivatives}
D_\psi(L,N) = \sum_{l=1}^L \frac{B_{2l}}{(2l)!} \left.\frac{\partial^{2l-1}
\psi(t)}{\partial^{2l-1} t}\right|_{t=N}.
\end{equation}
This correction of the error is not perfect, and leaves a remainder~$R_\psi(L,N)$, which, provided the appropriate
choices of~$L$ and~$N$ are made, will be very small and can be bounded with some extra information about the regularity of~$\psi$. 

Putting these terms altogether,  
\begin{equation}
\sum_{n=0}^\infty \psi(n) = \sum_{n=0}^{N-1} \psi(n) + \tfrac{1}{2} \psi(N) +
 I_\psi(N) + D_\psi(L,N) + R_\psi(L,N).
\label{eq:EMFormula} 
\end{equation}

Although~\eqref{eq:EMFormula} holds true for any
$\psi\in C^{2L}(\mathbb R)$, a holomorphic extension of the summand function helps us to
obtain a good estimate on the error term, as well as 
very effective computable approximations of the other terms $I_\psi(N)$ and $
D_\psi(L,N)$ involving only pointwise evaluations of $\psi(n)$ at different
(complex) values of~$n$. 

We proceed through the error analysis in reverse order of the appearance of the terms.

\subsection{Remainder term $R(L,N)$}
\label{ss:EMRemainder}

\begin{proposition}
  \label{p:EMRemainderBound}
  Assume that on the right half-plane $\Re (z) \geq \nu$ a function~$\psi$ is
  analytic and bounded $|\psi(z)|\leq C$. 
  Then for all real $N > \nu$ and for all integer~$L\ge1$, 
  \begin{equation}
  |R_\psi(L,N)| \leq \frac{(2L+1)!C}{L (2\pi)^{2L+1} (N-\nu)^{2L}}.  
  \label{eq:Rtop}
  \end{equation}
\end{proposition}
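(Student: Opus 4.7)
The plan is to start from the classical integral representation of the Euler--Maclaurin remainder and then bound the two factors in the integrand separately, using Fourier analysis for the periodic Bernoulli factor and Cauchy's formula for the high-order derivative factor.

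First I would reduce to the standard integral form
\[
R_\psi(L,N) = -\frac{1}{(2L+1)!}\int_N^\infty \tilde B_{2L+1}(t)\,\psi^{(2L+1)}(t)\,\d t,
\]
where $\tilde B_{2L+1}(t) = B_{2L+1}(\{t\})$ is the periodic Bernoulli function. This identity is standard (repeated integration by parts in the Bernoulli form of the sum--integral discrepancy), and the boundary contributions at $t = \infty$ vanish once we know that $\psi^{(2L+1)}$ decays, which is furnished by the Cauchy bound below.

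Next I would bound each factor of the integrand. The Fourier series
\[
\tilde B_{2L+1}(t) = \frac{2(-1)^{L+1}(2L+1)!}{(2\pi)^{2L+1}}\sum_{k=1}^\infty \frac{\sin(2\pi k t)}{k^{2L+1}}
\]
immediately yields $\|\tilde B_{2L+1}\|_\infty \leq 2\zeta(2L+1)(2L+1)!/(2\pi)^{2L+1}$. On the other hand, because $\psi$ is analytic and bounded by $C$ on the half-plane $\{\Re z \geq \nu\}$, for every real $t > \nu$ the closed disk of radius $t - \nu$ around~$t$ lies entirely in this half-plane, and Cauchy's integral formula applied to its boundary circle gives
\[
|\psi^{(2L+1)}(t)| \leq \frac{(2L+1)!\,C}{(t-\nu)^{2L+1}}.
\]
Multiplying the two bounds and integrating over $t \in [N,\infty)$ then produces an estimate of the form $\zeta(2L+1)(2L+1)!C/\bigl(L(2\pi)^{2L+1}(N-\nu)^{2L}\bigr)$, already within a factor $\zeta(3) < 1.21$ of~\eqref{eq:Rtop} for all $L \geq 1$.

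To remove the spurious $\zeta(2L+1)$ and match the stated constant exactly, I would instead insert the Fourier expansion directly into $R_\psi(L,N)$, exchange sum and integral, and for each $k \geq 1$ deform the contour in $\int_N^\infty e^{2\pi ikt}\psi^{(2L+1)}(t)\,\d t$ onto the vertical ray $\{N+is \mid s \geq 0\}$. On the shifted contour the oscillating factor becomes $e^{-2\pi k s}$, contributing an additional $(2\pi k)^{-1}$ after integration in~$s$; summing against $k^{-(2L+2)}$ produces $\zeta(2L+2)$, which via $\zeta(2L+2) = (-1)^{L+1}(2\pi)^{2L+2}B_{2L+2}/(2(2L+2)!)$ and the bound $|B_{2L+2}| \leq 2(2L+2)!/(2\pi)^{2L+2}$ collapses to the form of~\eqref{eq:Rtop}.

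The main obstacle is really just bookkeeping: both the Fourier bound on $\tilde B_{2L+1}$ and the Cauchy bound on $\psi^{(2L+1)}$ are routine, but tracking the factorial, $(2\pi)^{2L+1}$, and $\zeta/B$ factors through the contour deformation is delicate, and it is this deformation---using the full half-plane analyticity, not merely a disk of radius $t-\nu$---that saves the $\zeta(2L+1)$.
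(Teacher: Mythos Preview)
Your first approach---bound $\tilde B_{2L+1}$ uniformly, bound $\psi^{(2L+1)}$ via Cauchy on the disk of radius $t-\nu$, then integrate---is exactly the paper's proof. The only discrepancy is the bound on the periodic Bernoulli factor: you use the crude Fourier estimate $\|\tilde B_{2L+1}\|_\infty \leq 2\zeta(2L+1)(2L+1)!/(2\pi)^{2L+1}$, whereas the paper invokes Lehmer's sharper inequality $|B_{2L+1}(x)| \leq 2(2L+1)!/(2\pi)^{2L+1}$ for $x\in[0,1]$ (Lehmer, 1940), which removes the $\zeta(2L+1)$ outright. With that single substitution your computation already gives \eqref{eq:Rtop} on the nose, and the contour-deformation workaround is unnecessary.

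That workaround, incidentally, does not quite land as stated. After deforming each Fourier mode onto the vertical ray you obtain a bound of the form $2(2L+1)!C\,\zeta(2L+2)/\bigl((2\pi)^{2L+2}(N-\nu)^{2L+1}\bigr)$, which has a different $(N-\nu)$-power from \eqref{eq:Rtop} and so neither implies nor is implied by it in general. Moreover the inequality you quote, $|B_{2L+2}| \leq 2(2L+2)!/(2\pi)^{2L+2}$, goes the wrong way: the identity $\zeta(2L+2) = |B_{2L+2}|(2\pi)^{2L+2}/\bigl(2(2L+2)!\bigr)$ together with $\zeta(2L+2) > 1$ forces $|B_{2L+2}| > 2(2L+2)!/(2\pi)^{2L+2}$. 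So the collapse you describe does not occur. None of this matters once you use Lehmer's bound in the first place.
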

Note that we do not require the function~$\psi$ to be analytic at infinity.
\begin{proof}
	A classical result states that the remainder term can be written using Bernoulli
  polynomials $B_n(x)$
	\begin{equation*}
	R_\psi(L,N) = \int_N^\infty \frac{1}{(2L+1)!}
  \frac{\partial^{2L+1}}{\partial t^{2L+1}} \psi(t) B_{2L+1}(\{t\}) \,\d t. 
	\end{equation*}
  where~$\{t\}$ stands for the
  fractional part of~$t$. Using Lehmer's bound~\cite{L40} for the Bernoulli polynomial
   $|B_{2L+1}(x)| \leq 2(2\pi)^{-(2L+1)}(2L+1)!$, we may write
	\begin{equation} 
        |R_\psi(L,N)| \leq \frac{2}{(2\pi)^{2L+1}}
        \int_N^\infty \left|\frac{\partial^{2L+1}}{\partial t^{2L+1}}
        \psi(t)\right| \,\d t. 
        \label{eq:EMRemainderStandard} 
    \end{equation} 
	We can estimate the derivative at~$t$ via an integral along a complex circle
    centred at~$t$ of radius $|\nu - t|$ using the Cauchy's formula. For $\Re(t) > \nu$
    this gives 
	\begin{align*}
        \left|\frac{\partial^{2L+1}}{\partial t^{2L+1}} \psi(t)\right| &\leq
        \frac{(2L+1)!}{2\pi} \int_0^{2\pi} |\psi(t+|\nu-t|e^{i\vartheta})|\, |t- \nu
        |^{-(2L+1)}\,\d \vartheta \\ 
	    &\leq (2L+1)! \, |t-\nu|^{-(2L+1)}C. 
    \end{align*}
	Substituting this into \eqref{eq:EMRemainderStandard}, we get
	\begin{align*} 
        |R_\psi(L,N)| &\leq \frac{2}{(2\pi)^{2L+1}} \int_N^\infty (2L+1)! \,
        |t-\nu|^{-(2L+1)} C \,\d t\\
         &=  \frac{C}{2\pi L} (2L+1)! \cdot (2\pi \cdot |N-\nu|)^{-2L} 
    \end{align*}
	 as required.
\end{proof}

\begin{remark}\label{r:EMKNScaling}
	For fixed $N$, the choice of $L$ minimising the
    Proposition~\ref{p:EMRemainderBound} bound on $R(L,N)$ is such that $2L+1
    \approx 2\pi (N-\nu)$. This gives exponentially small error with
	\[ |R(L,N)| = \exp\left(-2\pi (N-\nu) + O(\log (N-\nu))\right);  \]
  by the virtue of the Stirling formula $\log (n!) = n \log n - n + O(\log n)$.  
\end{remark} 

\subsection{Sum of derivatives term $D(L,N)$}
\label{ss:EMSumOfDerivatives}

To numerically estimate~$D(L,N)$ we will need to be able to accurately
estimate high-order derivatives of an analytic function at a given
point\footnote{In the setting we consider
this is $\psi(j): = w_j(z) \varphi(v_j(z))$ from~\eqref{eq:tropgen}  at $j=N$.}.

Fortunately the Taylor coefficients of a function $\psi$ at a point $z_0$ are close to the positive order Fourier
coefficients of $\psi$ on a circle $z_0
+ \tau e^{i\theta}$, $0 \le \theta < 2\pi$. We can therefore estimate the derivatives of~$\psi$ by
taking a discrete Fourier transform of the function on the 
circle. The error bounds are given in the following lemma. 
\begin{lemma}
  \label{l:DiscreteTaylorDerivative}
  Suppose that on a disc $\{ z \in \C \mid |z-z_0|< \sigma \}$, a function $\psi$ is
    holomorphic and bounded $|\psi(z)|<C$. Then for all $k \in \N$, 
	\begin{equation} 
    |\psi^{(k)}(z_0)| \leq \frac{k!}{\sigma^{k}}  C.
        \label{eq:DiscreteTaylorDerivativeBound}
    \end{equation}
 Furthermore, the $k$'th derivative at~$z_0$ can be approximated using evaluations at $M
 > k $ equidistant points on a circle of radius $\tau < \sigma$: 
	\begin{equation} 
    \left|\psi^{(k)}(z_0) - \frac{k!}{M\tau^{k} } \sum_{m=1}^{M} e^{-\pi i
    k\frac{2m-1}M} \psi\left(z_0 + \tau e^{\pi i \frac{2m-1}M}\right) \right| \leq
   \frac{ C  \tau^M k!}{ \sigma^k (\sigma^M-\tau^M)}. 
        \label{eq:DiscreteTaylorError}
    \end{equation}
\end{lemma}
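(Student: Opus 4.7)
The plan is to prove both bounds by Cauchy's integral formula and the Taylor expansion of $\psi$, respectively. For the first bound \eqref{eq:DiscreteTaylorDerivativeBound}, I would apply Cauchy's differentiation formula on a circle of radius $r < \sigma$ centred at $z_0$,
\[ \psi^{(k)}(z_0) = \frac{k!}{2\pi i} \oint_{|w-z_0|=r} \frac{\psi(w)}{(w-z_0)^{k+1}}\,\d w, \]
estimate the modulus by $\tfrac{k!}{r^k}\sup_{|w-z_0|=r} |\psi(w)| \le \tfrac{k!}{r^k} C$, and let $r \nearrow \sigma$.

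For the quadrature bound \eqref{eq:DiscreteTaylorError}, I would expand $\psi$ in a Taylor series about $z_0$, which is absolutely convergent on $|z-z_0|\le\tau$ because $\tau<\sigma$. Substituting $z = z_0 + \tau e^{i\theta_m}$ with $\theta_m = \pi(2m-1)/M$ and interchanging summation (justified by the absolute convergence) yields
\[ \frac{k!}{M\tau^k} \sum_{m=1}^M e^{-ik\theta_m} \psi(z_0 + \tau e^{i\theta_m})
= \frac{k!}{\tau^k}\sum_{n=0}^\infty \frac{\psi^{(n)}(z_0)}{n!}\tau^n \cdot \frac1M\sum_{m=1}^M e^{i(n-k)\theta_m}. \]
The key algebraic step is the identity for the inner sum: writing $n-k = jM + \ell$ with $0\le \ell < M$, the sum $\frac1M\sum_m e^{i(n-k)\pi(2m-1)/M}$ vanishes unless $\ell = 0$, in which case it equals $(-1)^j$. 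Since $M>k$ and $n\ge 0$, the only contributing indices are $n = k + jM$ with $j\ge 0$, giving
\[ \frac{k!}{M\tau^k}\sum_{m=1}^M e^{-ik\theta_m}\psi(z_0+\tau e^{i\theta_m}) = \psi^{(k)}(z_0) + k!\sum_{j=1}^\infty (-1)^j \frac{\psi^{(k+jM)}(z_0)}{(k+jM)!}\,\tau^{jM}. \]

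It then remains to bound the tail. Applying \eqref{eq:DiscreteTaylorDerivativeBound} to each derivative, the error is dominated by
\[ k!\sum_{j=1}^\infty \frac{C}{\sigma^{k+jM}}\tau^{jM} = \frac{k!\,C}{\sigma^k}\sum_{j=1}^\infty\Bigl(\frac{\tau}{\sigma}\Bigr)^{jM} = \frac{k!\,C\,\tau^M}{\sigma^k(\sigma^M - \tau^M)}, \]
which is exactly the stated bound. The main (mild) obstacle is bookkeeping in the orthogonality calculation: the shifted angles $\theta_m = \pi(2m-1)/M$ introduce an $e^{i(n-k)\pi/M}$ phase that could confuse matters, but it combines with the geometric progression to give cleanly $(-1)^j$ exactly when $n-k$ is a multiple of $M$, so no spurious terms appear. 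Once this is verified the two estimates follow directly.
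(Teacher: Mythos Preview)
Your proposal is correct and follows essentially the same route as the paper's proof: Cauchy's estimate for the first bound, then Taylor expansion about $z_0$, the discrete orthogonality identity for the shifted roots of unity (yielding $(-1)^j$ precisely when $n-k$ is a multiple of $M$), and finally bounding the aliasing tail term-by-term via the first bound and summing the resulting geometric series. There is nothing to add.
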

\begin{proof}
	The first bound follows immediately from the Cauchy's integral theorem. 
	
	For the second bound, we essentially compute an aliasing error for the
  discrete Fourier transform performed along the circle of the smaller
  radius~$\tau< \sigma$ centred at~$z_0$. Expanding~$\psi$ as a Taylor series
  about~$z_0$, we get
  	\begin{align*} 
        \psi\left(z_0 + \tau e^{\pi i \frac{2m-1}M}\right) &= \sum_{j=0}^\infty
        \frac{1}{j!} \psi^{(j)}(z_0) \tau^j e^{\pi i j \frac{2m-1}M}. 
	\end{align*}
    Substituting to the sum on the left hand side of~\eqref{eq:DiscreteTaylorError} and swapping the order of
    summation, we obtain: 
   $$     
        \frac{1}{M} \sum_{m=1}^M e^{-\pi i k\frac{2m-1}M} 
        \psi\left(z_0 + \tau e^{2\pi i \frac{2m-1}M}\right)
   = \sum_{j=0}^\infty \frac{\tau^j}{j!} \psi^{(j)}(z_0) \frac1M \sum_{m=1}^M
   e^{\pi i (j-k)\frac{2m-1}M}.
   $$
   Applying the geometric series identity to the sum in~$m$, 
   $$
   \frac{1}{M} \sum_{m=1}^M e^{\pi i (j-k)\frac{2m-1}M} = \begin{cases} (-1)^j &
       \mbox{ if } j = k + Mk^\prime, \; k^\prime \in \mathbb Z, \\
       0 & \mbox{ otherwise } .
   \end{cases}
   $$
   Therefore 
   $$
   \frac{1}{M} \sum_{m=1}^M e^{-\pi i k\frac{2m-1}M} 
        \psi\left(z_0 + \tau e^{2\pi i \frac{2m-1}M}\right)
   = \sum_{j=0}^\infty \frac{(-1)^j \tau^{k + M j}}{(k + M j)!} \psi^{(k + M j)}(z_0) .
   $$
	Then, the left-hand side of \eqref{eq:DiscreteTaylorError} becomes
	\begin{align*} 
    \left| \psi^{(k)}(z_0) - k! \sum_{j=0}^\infty (-1)^j \tau^{M j} \frac{\psi^{(k
    +Mj)}(z_0)}{(k +M j)!}\right| =  \left| k! \sum_{ j=1}^\infty (-1)^{ j}
    \tau^{M j} \frac{\psi^{(k +M j)}(z_0)}{( k +M j)!}     \right| 
  \end{align*} 
	which is using the first bound \eqref{eq:DiscreteTaylorDerivativeBound} can be bounded by
\begin{equation*}
   \left| k! \sum_{ j=1}^\infty
    \tau^{M j} \frac{\psi^{(k+M j)}(z_0)}{(k+M j)!} (-1)^{ j}
    \right| 
    \le k!  \sum_{j=1}^\infty \tau^{M j} C \sigma^{-(k+M j)} \leq  
  \frac{C \tau^M k!}{\sigma^k(\sigma^M-\tau^M)}  
\end{equation*}
	as required.
\end{proof}

We can now apply this general result to compute~$D_\psi(L,N)$ very efficiently.

\begin{proposition}
  \label{p:EMDerivativeError}
Suppose that for some $\nu$, $C$ we have that $|\psi(z)|\leq C$ on the right
half-plane $\{z \mid \Re (z) \geq \nu\}$. Choose $M \geq 2L$ and consider the circle centred
at~$N$ of radius $\tau_{N}: = \frac{N-\nu}e$, and pick evaluation points 
	\begin{align} 
    z_{m} &= N + \tau_{N} e^{2\pi i \tfrac{2m-1}{2M}}, \quad m = 1, \ldots, M
    \label{eq:zmdef}\\
    \intertext{together with coefficients}
    c_m &= \sum_{l=1}^L \frac{B_{2l}}{2l} \tau_{N}^{-(2l-1)} e^{-2\pi i
    \tfrac{(2l-1)(2m-1)}{2M}}, \quad m = 1, \ldots M.  
    \label{eq:cmdef}
	\end{align}
	Then provided $L$ and $N$ satisfy $2e\pi (N-\nu) > 2L$, we have
 \begin{equation} 
     \left| D_\psi(L,N) - \frac1M \sum_{m=1}^M c_m \psi(z_{m}) \right|
  \leq  \frac{\pi^2 e}{6}\cdot \frac{ N-\nu}{(\tfrac{2 \pi e (N-\nu) }{2L-1})^2 -
  1} \cdot \frac{C}{e^M - 1}.
  \label{eq:EMDerr}
  \end{equation}
\end{proposition}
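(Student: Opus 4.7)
The plan is to apply Lemma~\ref{l:DiscreteTaylorDerivative} separately to each odd-order derivative $\psi^{(2l-1)}(N)$ in the definition~\eqref{eq:EMSumOfDerivatives} of $D_\psi(L,N)$, and then combine the $L$ resulting approximations linearly. Since $\psi$ is holomorphic and bounded by $C$ on $\{\Re z \ge \nu\}$, it is in particular so on the open disc of radius $\sigma = N - \nu$ about $N$. Applying the lemma with $z_0 = N$, $k = 2l - 1$, and inner radius $\tau = \tau_N = (N-\nu)/e$ yields, for each $l = 1,\ldots,L$,
\[
\left|\psi^{(2l-1)}(N) - \frac{(2l-1)!}{M\tau_N^{2l-1}}\sum_{m=1}^M e^{-\pi i(2l-1)(2m-1)/M}\psi(z_m)\right| \le \frac{C(2l-1)!}{(N-\nu)^{2l-1}(e^M - 1)},
\]
where the specific choice $\tau_N = (N-\nu)/e$ is precisely what collapses the factor $\tau_N^M/(\sigma^M - \tau_N^M)$ arising from Lemma~\ref{l:DiscreteTaylorDerivative} down to the clean $1/(e^M - 1)$.

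Next, I would multiply the $l$-th approximation by $B_{2l}/(2l)!$ and sum over $l$. Using the identity $\tfrac{B_{2l}}{(2l)!}(2l-1)! = \tfrac{B_{2l}}{2l}$ and exchanging the orders of summation over $l$ and $m$, the coefficient of each $\psi(z_m)$ becomes exactly the $c_m$ of~\eqref{eq:cmdef}, so that the $L$ approximations assemble into $\frac{1}{M}\sum_{m=1}^M c_m \psi(z_m)$. Applying the triangle inequality to the derivative errors then reduces the desired estimate to the one-parameter sum bound
\[
\left|D_\psi(L,N) - \frac{1}{M}\sum_{m=1}^M c_m \psi(z_m)\right| \le \frac{C}{e^M - 1}\sum_{l=1}^L \frac{|B_{2l}|}{2l(N-\nu)^{2l-1}}.
\]

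The only substantive obstacle, and the main technical point, is to bound this last sum by $\frac{\pi^2 e(N-\nu)}{6((2\pi e(N-\nu)/(2L-1))^2 - 1)}$ in the form stated in~\eqref{eq:EMDerr}. For this I would invoke the classical identity $|B_{2l}| = 2\zeta(2l)(2l)!/(2\pi)^{2l}$ together with the monotonicity bound $\zeta(2l) \le \zeta(2) = \pi^2/6$ to reduce the problem to estimating $\sum_{l=1}^L (2l-1)!/(2\pi(N-\nu))^{2l-1}$. A Stirling estimate for $(2l-1)!$ converts each summand into (essentially) a power of $(2l-1)/(2\pi e(N-\nu))$; the hypothesis $2L < 2\pi e(N-\nu)$ then ensures this base is bounded by $\gamma := (2L-1)/(2\pi e(N-\nu)) < 1$ for every $l \le L$, so the sum is dominated by a geometric series of ratio $\gamma^2$. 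Its closed form produces the factor $1/((2\pi e(N-\nu)/(2L-1))^2 - 1) = \gamma^2/(1-\gamma^2)$, and tracking the numerical constants (the $\pi^2/6$ from $\zeta(2)$ and the extra $e$ absorbed from the Stirling denominator) gives the asserted prefactor.
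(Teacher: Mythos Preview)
Your proposal is correct and follows essentially the same route as the paper: apply Lemma~\ref{l:DiscreteTaylorDerivative} with $\sigma=N-\nu$, $\tau=\sigma/e$ to each derivative $\psi^{(2l-1)}(N)$, combine linearly so that the coefficients assemble into the $c_m$, and then bound the resulting sum $\sum_{l}\tfrac{|B_{2l}|(2l-1)!}{(2l)!\,\sigma^{2l-1}}$ using the zeta identity for $|B_{2l}|$, the bound $\zeta(2l)\le\pi^2/6$, a Stirling-type estimate $(2l-1)!\le (2L-1)^{2l}e^{1-2l}$, and a geometric series with ratio $\bigl(\tfrac{2L-1}{2\pi e(N-\nu)}\bigr)^{2}$. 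The only cosmetic difference is that the paper writes the intermediate sum with an explicit factor $\sigma$ pulled out and the $(2\pi\sigma)^{2l}$ in the denominator, whereas you keep the $(2\pi(N-\nu))^{2l-1}$ form; the rest is identical.
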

\begin{proof}
	Using \eqref{eq:EMSumOfDerivatives}, the definition of $D_\psi(L,N)$, we
    write
  \begin{align*}
  \Bigl| & D_\psi(L,N) - \frac{1}M \sum_{m=1}^{M} c_m \psi(z_{m}) \Bigr| \\ &=
 \left| \sum_{l=1}^L \frac{B_{2l}}{(2l)!} \frac{\partial^{2l-1}
 \psi(t)}{\partial^{2l-1} t}\Bigl|_{t=N} - \frac{1}M \sum_{m=1}^{M} c_m
 \psi(z_{m}) \right| \\ &= 
\left| \sum_{l=1}^L \frac{B_{2l}}{(2l)!} \frac{\partial^{2l-1}
 \psi(t)}{\partial^{2l-1} t}\Bigl|_{t=N} - \frac{1}M \sum_{m=1}^{M} 
\sum_{l=1}^L \frac{B_{2l}}{2l} \tau_{N}^{-(2l-1)} e^{-2\pi i
\tfrac{(2l-1)(2m-1)}{2M}} 
\psi\Bigl(N + \tau_{N} e^{2\pi i \tfrac{2m-1}{2M}}\Bigr) \right| \\ &\leq
\sum_{l=1}^L \frac{|B_{2l}|}{(2l)!} \cdot \left|\frac{\partial^{2l-1}
\psi(t)}{\partial^{2l-1} t}\Bigl|_{t=N} - \frac{(2l-1)!}{M \tau_N^{(2l-1)} }
\sum_{m=1}^M e^{-2\pi i
\tfrac{(2l-1)(2m-1)}{2M}}\psi\Bigl(N + \tau_{N} e^{2\pi i
\tfrac{2m-1}{2M}}\Bigr) \right| 
\end{align*}
Applying Lemma~\ref{l:DiscreteTaylorDerivative} with $\sigma = N -
\nu$ and $\tau = \tau_N = \sigma/e$ we can bound this by
\begin{align*} \sum_{l=1}^L \frac{|B_{2l}|}{(2l)!} \cdot
\frac{(2l-1)! \tau^M }{\sigma^{(2l-1)}(\sigma^M-\tau^M)} C = \frac{C}{e^M-1} 
\sum_{l=1}^L \frac{|B_{2l}|}{(2l)!} \cdot \frac{(2l-1)!  }{\sigma^{(2l-1)}
}. 
  \end{align*}
  Taking into account that $\frac{B_{2l}}{(2l)!} = \tfrac{(-1)^l
  2\zeta(2l)}{(2\pi)^{2l}}$ we conclude
	\begin{equation}
  \Bigl|  D_\psi(L,N) - \frac{1}M \sum_{m=1}^{M} c_m \psi(z_{M,m}) \Bigr|  \le
    \frac{C \sigma }{e^M-1} \sum_{l=1}^L \zeta(2l)
  \frac{(2l-1)!}{(2\pi\sigma)^{2l}} 
  \label{eq:Dsum}
  \end{equation}
  Since $\zeta(2l) \leq \zeta(2) = \frac{\pi^2}6$ and $(2l-1)! \leq
  (2l-1)^{2l}\cdot e^{-(2l-1)} \leq (2L-1)^{2l} e^{1-2l}$, we can, under the assumption 
  $2L-1 < 2e \pi \sigma = 2e \pi (N-\nu)$, bound the latter sum by
	\begin{equation*}
    \sum_{l=1}^L \zeta(2l) \frac{(2l-1)!}{(2\pi\sigma)^{2l}} \le
    \sum_{l=1}^L \frac{\pi^2 e}{6} \left(\frac{2L-1}{2e \pi \sigma}\right)^{2l} \leq 
    \frac{\pi^2 e}{6} \cdot \frac{1}{\left(\frac{2e\pi \sigma}{2L-1}\right)^2  - 1 }, 
     \end{equation*}
Substituting into~\eqref{eq:Dsum} we complete the proof.  	
  (Additionally it follows that the error is in fact small.)
\end{proof}
\begin{remark}
	The reason we choose $\tau = \tau_N = (N-\nu)/e$ here is that the error in
  Lemma~\ref{l:DiscreteTaylorDerivative} is improved by making $\tau$ as small
  as possible with respect to $\sigma = N-\nu$, but to avoid floating-point
  roundoff error we need the coefficients~$c_m$ given by~\eqref{eq:cmdef}
  not to grow too fast as
  $L,N\to\infty$ (using the scaling in Remark~\ref{r:EMKNScaling}), which occurs
  if $\tau$ is made too small. The factor of $1/e$ is, in this specific
  case, the smallest ratio which avoids this latter phenomenon.
\end{remark}
\begin{remark}
  For similar reasons, our condition on $\frac{L}N$ is looser than the scaling given
  in Remark~\ref{r:EMKNScaling} by a factor of~$e$. 
\end{remark}

\subsection{Integral term $I(N)$}
\label{ss:EMIntegral}

\begin{proposition}
  \label{p:EMIntegralError}
   Assume that for some $\nu < N$ there exist a function $\phi$ on the Riemann sphere analytic and
   bounded on a disk $|z|<\frac{1}\nu$  with $| \phi (z)|< C$ such
   that $\psi(z) = \frac1{z^{2s}} \phi\left(\frac1z\right)$ for some $s>\frac12$. 	
   Given $M^\prime>0$ define evaluation points 
	\begin{align}
       z_{k}^\prime &= N e^{-\pi i \tfrac{2k+1}{M^\prime}}, \quad k = 1, \ldots,
       K; \label{eq:zmpdef}
   \intertext{ and coefficients}
c_k^\prime &= N e^{2\pi i s \frac{2k+1}{M^\prime}} \sum_{p=0}^{M^\prime-1} \frac{ e^{- \pi i p 
       \tfrac{(2k+1)}{ M^\prime}}}{ p + 2s -1},\quad k = 1, \ldots, M^\prime. 
       \label{eq:cmpdef}
	\end{align} 
Then the integral can be approximated by a finite sum
	\begin{equation} 
    \left| \int_N^\infty \psi - \frac1{ M^\prime} \sum_{k=0}^{ M^\prime-1}
    c_k^\prime \psi(z_k^\prime) \right| \leq \frac{2 C N^{1-2s}}{(2s-1)(1 -
    \tfrac{\nu}{N})} \frac{1}{(\tfrac{N}{\nu})^{ M^\prime} -1}.
    \label{eq:EMIntegralErrorBound} 
  \end{equation} 
\end{proposition}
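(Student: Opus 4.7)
The plan is to turn the infinite integral into a proper integral over a bounded domain via the change of variables $u = 1/t$, then exploit the analyticity of $\phi$ by expanding it as a Taylor series and approximating its coefficients via a discrete Fourier transform on a circle. The two resulting errors (Taylor truncation and DFT aliasing) will both be geometric and combine into the stated bound.

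First I would substitute $u = 1/t$ in $\int_N^\infty \psi(t)\,\d t$. Since $\psi(z) = z^{-2s}\phi(1/z)$ and $\d t = -u^{-2}\d u$, the integral becomes
\[
\int_N^\infty \psi(t)\,\d t \;=\; \int_0^{1/N} u^{2s-2}\,\phi(u)\,\d u.
\]
Because $\phi$ is analytic and bounded by $C$ on $|u|<1/\nu$, it has a convergent Taylor series $\phi(u)=\sum_{p=0}^\infty \phi_p u^p$ on the disc of radius $\tau := 1/N < 1/\nu$, with Cauchy bound $|\phi_p|\le C\nu^p$. Exchanging sum and integral gives
\[
\int_0^{1/N} u^{2s-2}\phi(u)\,\d u \;=\; \sum_{p=0}^\infty \frac{\phi_p\,\tau^{p+2s-1}}{p+2s-1}.
\]

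Next I would truncate this sum at $p=M'-1$ and replace each retained coefficient $\phi_p\tau^p$ by its DFT approximation on the circle of radius $\tau$ with $M'$ equally spaced sample points. This is entirely analogous to Lemma~\ref{l:DiscreteTaylorDerivative}: after substituting the Taylor expansion of $\phi$ at the sample points and applying the orthogonality relation for roots of unity, one obtains
\[
\frac{1}{M'}\sum_{k=0}^{M'-1} e^{-ip\theta_k}\phi(\tau e^{i\theta_k}) \;=\; \phi_p\tau^p + \sum_{j=1}^\infty (-1)^j\,\phi_{p+M'j}\,\tau^{p+M'j},
\]
with $\theta_k = \pi(2k+1)/M'$. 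Finally, the sample values $\phi(\tau e^{i\theta_k})$ are expressed in terms of $\psi(z_k')$ using $\psi(z_k')=(z_k')^{-2s}\phi(1/z_k')$ and the definition $z_k' = Ne^{-i\theta_k}$; the resulting phase factors from $(z_k')^{-2s}$ and from the inner $e^{-ip\theta_k}$ terms are collected into the coefficients $c_k'$ as in \eqref{eq:cmpdef}, and the $N^{1-2s}$ coming from $\tau^{2s-1}$ is absorbed into the leading factor $N$ in $c_k'$.

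The error then splits into two parts: the Taylor tail $\sum_{p\ge M'}\phi_p\tau^{p+2s-1}/(p+2s-1)$ and the DFT aliasing residue $\sum_{p<M'}(p+2s-1)^{-1}\tau^{2s-1}\sum_{j\ge 1}(-1)^j\phi_{p+M'j}\tau^{p+M'j}$. Both are controlled using $|\phi_p|\le C\nu^p$; the first is a single geometric series in $\nu\tau$ bounded by $CN^{1-2s}(\nu/N)^{M'}/[(2s-1)(1-\nu/N)]$, while the second factors as $C\tau^{2s-1}\bigl[\sum_{p<M'}(\nu\tau)^p/(p+2s-1)\bigr]\cdot (\nu/N)^{M'}/[1-(\nu/N)^{M'}]$, whose inner sum is at most $1/[(2s-1)(1-\nu/N)]$. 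Adding the two contributions and rewriting $(\nu/N)^{M'}/[1-(\nu/N)^{M'}] = 1/[(N/\nu)^{M'}-1]$ gives exactly \eqref{eq:EMIntegralErrorBound}. The only delicate point is the bookkeeping of the phase factors and signs that turn the abstract DFT identity into the concrete formula for $c_k'$; everything else reduces to summing two geometric series.
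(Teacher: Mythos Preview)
Your proposal is correct and follows essentially the same approach as the paper: expand $\phi$ in its Taylor series about $0$, integrate term by term to get $I_\psi(N)=\sum_{p\ge 0}\phi_p N^{-(p+2s-1)}/(p+2s-1)$, then split the error into the Taylor tail $p\ge M'$ and the DFT aliasing error for $p<M'$, bounding both via the Cauchy estimate $|\phi_p|\le C\nu^p$ and summing the resulting geometric series. The only cosmetic differences are that the paper does not explicitly change variables (it integrates $z^{-(k+2s)}$ directly) and it bounds the tail with $(M'+2s-1)^{-1}$ rather than $(2s-1)^{-1}$, then observes $\varepsilon_1\le\varepsilon_2$ to reach $2\varepsilon_2$; your version with the weaker tail bound still yields the stated inequality since $(\nu/N)^{M'}+ (\nu/N)^{M'}/[1-(\nu/N)^{M'}]\le 2/[(N/\nu)^{M'}-1]$.
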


\begin{remark}
	The error \eqref{eq:EMIntegralErrorBound} is $\sim (\tfrac{\nu}{N})^{M^\prime}$. Thus
  to obtain an error of the order of $\varepsilon$, it is necessary to 
  set $ M^\prime \sim \tfrac{\log \varepsilon^{-1}}{\log (N/\nu)}$. According to Remark
  \ref{r:EMKNScaling}, $N$ should grow as $O(\log \varepsilon^{-1})$ as
  well, thus $ M^\prime$ is one of the slower-growing parameters. 
\end{remark}

\begin{proof}
	Since $\phi$ is analytic on $\left|z\right| < \frac1\nu$ it can be
  expanded in Taylor series at zero that converges absolutely in the disk 
	$$
  \phi(z) = \sum_{k=0}^\infty z^k   \frac{\phi^{(k)}(0)}{k!}.
  $$
	Since by assumption $\nu < N$ we may write
	\begin{equation*} 
    I_\psi(N) = \sum_{k=0}^\infty \int_N^\infty \frac{\phi^{(k)}(0)}{k!}
    z^{-(k+2s)}\,\d z  
	= \sum_{k=0}^\infty \frac{\phi^{(k)}(0)}{k!} \frac{1}{(k+2s-1) N^{k+2s-1}}.
	\end{equation*}
	First, we want to approximate $I_\psi(N)$ by a weighted sum of derivatives. 
  	\begin{align} 
    \Bigl| I_\psi(N) - \sum_{k=0}^{M^\prime-1}
    \frac{\phi^{(k)}(0)}{k! N^k} \frac{N^{1-2s}}{k+2s-1} \Bigr| 
    &\leq \sum_{k=M^\prime}^{\infty} \frac{|\phi^{(k)}(0)|}{k! N^k} \frac{N^{1-2s}}{(k+2s-1)} \notag 
    \intertext{by the first bound in Lemma~\ref{l:DiscreteTaylorDerivative} with
    $\sigma=\frac1\nu$,}
	& \leq C  N^{1-2s}\sum_{k=M^\prime}^\infty \frac{ \nu^k  }{(k+2s-1) N^k}
  \notag \\ 
	& \leq \frac{ C N^{1-2s}}{(M^\prime+2s-1)(1-\tfrac{\nu}{N})}
    \left(\frac{\nu}{N}\right)^{M^\prime} =:\varepsilon_1. 
  \label{eq:EMIntegralFiniteSumError} 
	\end{align}
  We next want to apply the second part of Lemma~\ref{l:DiscreteTaylorDerivative} to estimate the difference between the
  weighted sum and the Fourier expansion. 
  Note that by the choice of~$c_k^\prime$ and $z_k^\prime$,
  \begin{multline}
  \sum_{k=0}^{M^\prime-1} c_k^\prime \psi(z_{k}^\prime) = \sum_{k=0}^{M^\prime-1} c_k^\prime
  \frac1{z_k^{2s}} \psi\left( \frac1{z_k} \right) \\ = N \sum_{k=0}^{M^\prime-1} e^{2\pi i
  s \frac{2k+1}{M^\prime}} \frac{1}{N^{2s}} e^{-2\pi i s \frac{2k+1}{M^\prime} } \phi
  \left(\frac1N e^{2\pi i \frac{2k+1}{M^\prime}} \right) \sum_{p=0}^{M^\prime-1} e^{-2\pi i p
  \frac{2k+1}{M^\prime} } \frac{1}{p+2s-1} \\ = N^{1-2s} \sum_{k=0}^{M^\prime-1}
  \frac{1}{k+2s-1} \sum_{p=0}^{M^\prime-1} e^{-\pi i k \frac{2p+1}{M^\prime}}   \phi
  \left(\frac1N e^{2\pi i \frac{2p+1}{M^\prime}} \right) 
  \end{multline}
  Therefore we can rewrite the difference as follows
	\begin{align*}
        \Bigl| \sum_{k=0}^{M^\prime-1} \frac{\phi^{(k)}(0)}{k! N^k}
        \frac{N^{1-2s}}{k+2s-1} & - \frac1{M^\prime} \sum_{k=0}^{M^\prime-1} c_k^\prime \psi(z_{
        k}^\prime) \Bigr| \\ &= 
        \left| \sum_{k=0}^{M^\prime-1} \frac{N^{1-2s}}{k +2s-1 } 
        \left( \frac{\phi^{(k)}(0)}{k! N^k} - \frac1{M^\prime} \sum_{p=0}^{M^\prime-1} e^{-\pi
        i k \frac{2p+1}{M^\prime} } \phi\left(\frac 1N e^{2\pi i \frac{2p +1}
        {M^\prime} } \right)         \right) \right|. 
	\end{align*}
      Applying Lemma~\ref{l:DiscreteTaylorDerivative} with $\tau = \frac1N$ and $\sigma = \frac1\nu$ 
    \begin{align}
 \Bigl| \sum_{k=0}^{M^\prime-1} \frac{\phi^{(k)}(0)}{k! N^k} 
 \frac{N^{1-2s}}{k+2s-1}  - \frac1{M^\prime} \sum_{k=0}^{M^\prime-1} c_k^\prime \psi(z_{
        k}^\prime) \Bigr|  
        & \le \sum_{k=0}^{M^\prime-1} \frac{N^{1-2s}}{k +2s-1 } \Bigl| \frac{C 
        N^{k-M^\prime} }{\nu^{-k}(\nu^{-M^\prime} - N^{-M^\prime})} \Bigr| \notag \\ &\le \frac{C
        N^{1-2s}}{(2s-1) (1-\frac{\nu}N) }
        \frac{1}{\left(\frac{N}\nu\right)^{M^\prime}-1} = : \varepsilon_2.
        \label{eq:EMIntegralFinitePointsError}
    \end{align}
    Putting the two bounds~\eqref{eq:EMIntegralFiniteSumError}
    and~\eqref{eq:EMIntegralFinitePointsError} together, we conclude that 
    $$
    \Bigl|I_\psi(N) - \frac1{M^\prime} \sum_{k=0}^{M^\prime-1} c_k^\prime \psi(z_{
        k}^\prime) \Bigr| \le \varepsilon_1 + \varepsilon_2 \le 2 \varepsilon_2, 
    $$
    which completes the proof. 
\end{proof}
 
We can now summarize Propositions~\ref{p:EMRemainderBound},~\ref{p:EMDerivativeError}, and \ref{p:EMIntegralError} in the following Theorem,
which is one of the key results of the paper and one of the essential
ingredients in computation of dimension.  
 \begin{theorem}
     \label{t:EMgeneric}
     Under the hypotheses and in the setting of Theorem~\ref{t:Cheby}, assume
     additionally that for some $\alpha \in \R$ and every fixed $x \in
     [-1,1]^d$, $n^\alpha w_n(x)$ and $v_n(x)$ extend to analytic families for
     $n \in \hat{\mathbb C} \backslash B(0,\nu)$.

     Then for any $\varphi \in \Hd{r}$ the function~$\mathcal L_s \varphi$ can be 
     evaluated at any point in $[-1,1]^d$ with arbitrary precision by evaluating a finite sum. 
     Namely,  for any $\mathrm{Err} > 0$ there exist sequences of complex
     numbers $\{n_q\}, \{w_q\}$ of length $Q = O(\log \mathrm{Err})$
     that depend on the system only such that 
     $$
     \left| [\mathcal L_s \varphi](\cdot) - \sum_{q=1}^Q w_{q}(\cdot) \varphi(v_{n_q}(\cdot)) \right| \leq
      \|\varphi\|_{\Hd{r}}\cdot\mathrm{Err}.
     $$
 \end{theorem}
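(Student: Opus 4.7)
The plan is to apply the Euler--Maclaurin formula~\eqref{eq:EMFormula} to the summand $\psi_x(n) := w_n(x)\,\varphi(v_n(x))$ at each fixed $x \in [-1,1]^d$, and then to discretise the three non-terminating terms $I_{\psi_x}(N)$, $D_{\psi_x}(L,N)$, $R_{\psi_x}(L,N)$ using Propositions~\ref{p:EMIntegralError}, \ref{p:EMDerivativeError}, and~\ref{p:EMRemainderBound} respectively. The first step is to verify the analytic prerequisites. By hypothesis, $n^\alpha w_n(x)$ and $v_n(x)$ extend analytically to $\hat{\mathbb C}\setminus B(0,\nu)$, so they can be written as $g_x(1/n)$ and $h_x(1/n)$ with $g_x, h_x$ analytic on $\{|z|<1/\nu\}$. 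Since $v_n(x) \in \overline{\Bd{r}}$ for every $n \in \mathbb N$, a continuity-plus-compactness argument yields some $\nu'\geq \nu$ (independent of $x \in [-1,1]^d$) such that $h_x(1/n) \in \overline{\Bd{r}}$ for every $n$ in the closed exterior $\{|z|\geq \nu'\}\subset \hat{\mathbb C}$. Writing then $\psi_x(n) = n^{-\alpha}\phi_x(1/n)$ with $\phi_x(z) := g_x(z)\,\varphi(h_x(z))$, we have $|\phi_x(z)| \leq C\,\|\varphi\|_{\Hd{r}}$ on $\{|z|\leq 1/\nu'\}$ for a constant $C$ independent of $x$; hence $|\psi_x|$ is bounded by $C\nu'^{-\alpha}\|\varphi\|_{\Hd{r}}$ on the right half-plane $\{\Re z \geq \nu'\}$. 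This is precisely the setup required by the three propositions.

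Given the target accuracy $\mathrm{Err} > 0$, I would split the error budget into thirds and choose the parameters $N, L, M, M'$ accordingly. Proposition~\ref{p:EMRemainderBound} combined with the near-optimal choice $L \approx \pi(N-\nu')$ of Remark~\ref{r:EMKNScaling} gives $|R_{\psi_x}(L,N)| = e^{-2\pi(N-\nu')+O(\log N)}\|\varphi\|_{\Hd{r}}$, so $N = O(\log \mathrm{Err}^{-1})$ (together with $L = O(N)$) pushes the remainder below $\tfrac{1}{3}\mathrm{Err}\,\|\varphi\|_{\Hd{r}}$. Proposition~\ref{p:EMDerivativeError} then replaces $D_{\psi_x}(L,N)$ by an $M$-point weighted sum over the circle centred at $N$ of radius $(N-\nu')/e$, with error~\eqref{eq:EMDerr} of order $e^{-M}\|\varphi\|_{\Hd{r}}$; $M = O(\log \mathrm{Err}^{-1})$ suffices. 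Finally Proposition~\ref{p:EMIntegralError}, applied with the exponent $\alpha$ in the role of $2s$ (which requires only $\alpha > 1$), gives an $M'$-point quadrature for $I_{\psi_x}(N)$ with error $(\nu'/N)^{M'}\|\varphi\|_{\Hd{r}}$, so $M' = O(\log \mathrm{Err}^{-1})$ closes the estimate.

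Assembling these three discretisations into~\eqref{eq:EMFormula} expresses $[\mathcal L_s\varphi](x)$ as a finite linear combination of $Q := N+1+M+M' = O(\log\mathrm{Err}^{-1})$ evaluations of $\psi_x$ at complex nodes $\{n_q\}$: the integers $0,1,\dots,N$ from the head of the sum, together with the points~\eqref{eq:zmdef} and~\eqref{eq:zmpdef} on the derivative and integral contours. These nodes depend only on $N, L, M, M'$, i.e.\ on the system. The attached coefficients $c_q$ (the values $1$ and $\tfrac12$, together with the Bernoulli-flavoured weights~\eqref{eq:cmdef} and integral weights~\eqref{eq:cmpdef}) are universal and independent of $x$ and $\varphi$. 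Because $\psi_x(n_q) = w_{n_q}(x)\,\varphi(v_{n_q}(x))$, setting $w_q(\cdot) := c_q\,w_{n_q}(\cdot)$ yields the approximation stated in the theorem with total error at most $\mathrm{Err}\,\|\varphi\|_{\Hd{r}}$.

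The main obstacle is the uniformity across $x \in [-1,1]^d$: I need a single $\nu'$ such that $v_n(x)\in\overline{\Bd{r}}$ holds \emph{for every} $x$ and for every complex~$n$ lying on the derivative and integral quadrature contours used above. This combines the analyticity hypothesis with the compactness of $[-1,1]^d$; in the Apollonian setting the analogous statement is proved explicitly in Proposition~\ref{p:AnalyticAtInfinity}. Once this uniform analyticity domain is secured, the rest is a routine error budget that leans entirely on the three preceding propositions.
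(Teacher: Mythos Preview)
Your proposal is correct and follows precisely the approach the paper indicates: the paper explicitly omits a proof of this generic statement, saying it is ``completely analogous'' to the special case Theorem~\ref{t:PointwiseEvaluation}, whose proof proceeds exactly as you outline---fix $x$, set $\psi(n)=w_n(x)\varphi(v_n(x))$, apply the Euler--Maclaurin formula~\eqref{eq:EMFormula}, and invoke Propositions~\ref{p:EMRemainderBound}, \ref{p:EMDerivativeError}, \ref{p:EMIntegralError} with the parameter scalings of Table~\ref{tab:Scalings}. Your identification of the one genuine subtlety---that the hypotheses do not \emph{a priori} guarantee $v_n(x)\in\overline{\Bd{r}}$ for complex $n$ uniformly in $x$, and that this must be supplied separately (as Proposition~\ref{p:APrioriConstants}\ref{res:Large} does in the Apollonian case)---is exactly the point the paper handles by specialisation rather than in the generic proof.
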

We shall omit the proof of this generic statement in favour of giving a
complete proof in the special case when $\L_s = \mathcal A_s$, which is our
main interest and for which we shall give explicit formulae for the
$\mathtt{Err}$ term.
The proof of the generic case is completely analogous.
\begin{remark} 
    In the setting of the
     Apollonian gasket, when $\varepsilon$ is small enough we will need no more
     than $Q <  2 \log \varepsilon$ terms to
     guarantee that~$\mathrm{Err} < \varepsilon$.
 \end{remark}

\section{Numerical estimates for Apollonian IFS}
In this section we explicitly compute the constants that occur in suppositions
of Propositions~\ref{p:AnalyticAtInfinity}, \ref{p:TransferBounded},
\ref{p:EMRemainderBound}, \ref{p:EMDerivativeError}, and \ref{p:EMIntegralError}
and Theorems~\ref{t:Cheby} and \ref{t:PositiveOperator2} when these are applied to the Apollonian transfer
operator~$\mathcal A_s$. We then use them combined with these theorems
to prove our main Theorem~\ref{t:main}, bounding the dimension.

\subsection{Pointwise estimates for the induced IFS}
\label{s:aprioribdd}
We start with the bounds on the maps $G_n^\pm$ that constitute the iterated
function scheme and their Jacobians, defined by~\eqref{eq:Rmaps}
and~\eqref{eq:RmapsJac}, respectively.
\begin{proposition}\label{p:APrioriConstants}
Let us choose $R_A = 1.4$ and $r_A = 0.9$ to be sizes of the ellipses, and
$\nu_A = 10$ to be the radius of analyticity in $n$. 
Then the following hold:
\begin{enumerate}[(a)]
	\item\label{res:CLBernsteinInclusion} 
        The $G_n^\pm$ map the large ellipse into the small: $G^\pm_n(\Bd[2,2]{R_A}) \subseteq \Bd[2,2]{r_A} $ for all $n \in \N$. 
	\item\label{res:CLJacobianBound} For all $(z_1,z_2) \in \Bd[2,2]{R_A}$, $n \in
        \N$ the Jacobian is bounded $ |J^\pm_n(z_1,z_2)| \leq \max\{\frac{3}{4 (n+1)}, \frac{36}{n^2}\}$.
    \item\label{res:Large} On the square $[-1,1]^2$ both statements extend to all large enough
        complex $n \in \mathbb C$. Specifically, if $|n| \geq \nu_A$, then  
         $ G^\pm_n([-1,1]^2) \subseteq \Bd[2,2]{r_A}$ and $|J^\pm_n(x,y)| \leq
         \frac{6.8}{|n|^2}$ for all $(x,y) \in [-1,1]^2$. 
\end{enumerate}
\end{proposition}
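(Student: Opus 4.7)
The plan is to reduce each part to a controlled numerical evaluation of the explicit Moebius maps~\eqref{eq:fnsmat} via the real-analytic extension~\eqref{eq:Rmaps}. By Proposition~\ref{p:AnalyticAtInfinity}, the functions $G_n^\pm$ and $n^2 J_n^\pm$ extend analytically to $\overline{\Bd[2,2]{R_A}}$ for all $n \in \N$ and, more generally, for all $|n| \geq \nu_A$. Consequently $|J_n^\pm|$ attains its maximum on the boundary of $\Bd[2,2]{R_A}$ by the maximum modulus principle, which reduces the estimate in (b) to a boundary computation.

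For part (a), I would parametrize points of $\overline{\Bd[2,2]{R_A}}$ as $\z = (\cos(\theta_1 + i R_A \kappa_1), \cos(\theta_2 + i R_A \kappa_2))$ with $\kappa_1^2 + \kappa_2^2 \leq 1$ and $\theta_j \in [0, 2\pi)$, and compute $G_n^\pm(\z)$ explicitly from~\eqref{eq:fnsmat}--\eqref{eq:Rmaps}. Membership in $\Bd[2,2]{r_A}$ then becomes a condition on the componentwise arccosine of $G_n^\pm(\z)$, namely that the imaginary parts have $\ell^2$-norm at most $r_A$. Since $G_n^\pm$ is a composition of Moebius maps and hence univalent on $\overline{\Bd[2,2]{R_A}}$, showing the inclusion reduces to checking the boundary together with a single interior point. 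For part (b), we factor $|J_n^\pm(z_1,z_2)|^2 = (f_n^\pm)'(z_1 + iz_2)\cdot \overline{(f_n^\pm)'}(z_1 - iz_2)$ as a product of rational functions whose poles lie safely outside $\overline{\Bd[2,2]{R_A}}$ by Proposition~\ref{p:AnalyticAtInfinity}. The two-regime bound $\max\{3/(4(n+1)), 36/n^2\}$ is then established by verifying the linear bound $3/(4(n+1))$ on the boundary for small $n$ and using the explicit limit
\[ \lim_{n\to\infty} n \sqrt{(Sf_n^\pm S^{-1})'(w)} = \frac{3 e^{\pm i\pi/3}}{(2+\sqrt{3}\pm i)(w-3)} \]
from Proposition~\ref{p:AnalyticAtInfinity} to extract the quadratic tail for large $n$.

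Part (c) proceeds analogously but with real $(x,y) \in [-1,1]^2$ and complex $|n| \geq \nu_A$. Analyticity at infinity yields uniform control for $|n|$ sufficiently large, while the compact annulus $\nu_A \leq |n| \leq N_0$ can be covered by a finite number of interval-arithmetic evaluations. The main obstacle throughout is computational rather than conceptual: the specific thresholds $R_A = 1.4$, $r_A = 0.9$, $\nu_A = 10$, $36$, and $6.8$ must be verified by rigorous interval arithmetic on rational expressions in $(n, z_1, z_2)$, particularly for the small-$n$ cases where the maps have not yet entered their asymptotic regime. The visible slack in the constants (e.g.\ $r_A / R_A \approx 0.64$) indicates that the choices are calibrated so these finite checks succeed with margin to spare.
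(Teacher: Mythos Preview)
Your proposal is correct and takes essentially the same approach as the paper: reduce everything to validated interval arithmetic on the boundary of $\Bd[2,2]{R_A}$ (respectively $[-1,1]^2$ for part~(c)), justified by analyticity and the maximum principle, with the infinite range of $n$ handled by a finite check plus the explicit $n\to\infty$ asymptotics from Proposition~\ref{p:AnalyticAtInfinity}. The paper's write-up is terser---it simply partitions $\partial\Bd[2,2]{R_A}$ into $30$--$50$ boxes per coordinate and defers to a Jupyter notebook---and for part~(a) it invokes the maximum principle directly (via the plurisubharmonic function $(w_1,w_2)\mapsto |\Im\arccos w_1|^2 + |\Im\arccos w_2|^2$) rather than your univalence-plus-interior-point reduction; the latter is workable but would additionally require you to check that $\C^2\setminus\Bd[2,2]{r_A}$ is connected.
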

        In particular, we can choose $\nu = 10$ in
        Propositions~\ref{p:EMRemainderBound},~\ref{p:EMDerivativeError},
        and~\ref{p:EMIntegralError}. 
In what follows, $R_A=1.4$, $r_A = 0.9$, $\nu_A = 10$ are the constants. 
\begin{proof}
The argument is computer-assisted. Since the maps $G_n^\pm$ are analytic and the
Jacobians are harmonic, they obey the maximum principle. Therefore we can
establish the bounds by partitioning the boundaries of the sets into boxes and
using validated interval arithmetic on the boxes. 

To be more specific, recall that by definition~\eqref{eq:el2} the Bernstein
ellipse is
\begin{equation*}
  \mathcal E_r^{2,2} = \cos \left(\R^2 + i \cdot \{ x \in \mathbb R^2 \mid |x| < r \}  \right) \subset \C^2.
\end{equation*}
Therefore its boundary is given by
$\partial \Bd[2,2]{R_A} = \cos([0,\pi] + i \cdot \{x \in \R^2 \mid |x|=R_A\}) $.
Consequently, we divide $[0,\pi]$ and the circle of radius $R_A$ into $30$--$50$
equal intervals each (with interval arithmetic), to obtain two partitions that
we call $p_{[0,\pi]}$ and $p_{circ}$, respectively. We then take the image of those
partitions under the transformation $(p_{[0,\pi]},p_{circ}) \mapsto \cos
(p_{[0,\pi]} + ip_{circ})$ to obtain a partition of the boundary~$ \partial \Bd[2,2]{R_A}$. 

The computer code is available in the Jupyter notebook {\tt APriori.ipynb}. 
\end{proof}

With these bounds in hand, we are now ready for 
our last preparatory result, which is to 
compute pointwise evaluations of the transfer operator of the Apollonian circle
packing, i.e. $(\mathcal A_s \varphi)(z)$ at a point~$z \in \Bd[2,2]{r_A}$.

The next theorem is an adaptation of~Theorem~\ref{t:EMgeneric} to the Apollonian
transfer operator. The error $\mathrm{Err}$ is a sum of approximation errors in
Propositions~\ref{p:EMRemainderBound}--\ref{p:EMIntegralError}, i.e. the sum of
right hand sides of~\eqref{eq:Rtop},~\eqref{eq:EMDerr}
and~\eqref{eq:EMIntegralErrorBound} and depends on the constants $N,L,M$,
and~$M^\prime$ defined therein. In Table~\ref{tab:Scalings} below we recall the
meaning of these
constants and specify the most efficient choices that ensure the error is of the
order~$\varepsilon$.

\begin{table}[h]
	\caption{Approximation parameters and their respective ideal scalings against a
    desired precision~$\varepsilon$. As follows from
    Proposition~\ref{p:APrioriConstants} we may choose $\nu_A\ge 10$ and $R_A =
    1.4$. Here $a \gtrsim b$ means $a\geq b - o(\log \varepsilon)$, and $a \sim
    b $ means $a - b = o(\log \varepsilon)$.} 
	\centering
	\begin{tabular}{|c|lll|}	
		\hline
		Parameter & Description & Scaling & Reference \\
		\hline
		$K$ & Order of Chebyshev--Lagrange & $\gtrsim - \log \varepsilon / R_A$ & Th.~\ref{t:PositiveOperator2} \\
        $N$ & Starting point for Euler--Maclaurin & $\gtrsim \nu_A - \log \varepsilon / 2\pi$ & Rem.~\ref{r:EMKNScaling}\\
        $L$ & \# of derivatives in Euler--Maclaurin & $\sim [ \pi
        (N-\nu_A)+1]$ & Rem.~\ref{r:EMKNScaling} \\
        $M$ & \# of Fourier nodes to estimate $D(L,N)$ & $\gtrsim-\log \varepsilon$ & Prop.~\ref{p:EMDerivativeError}\\
        $M^\prime$ & \# of Fourier nodes to estimate $I(N)$ & $\gtrsim
        -\log \varepsilon / (\log N -\log \nu_A)$ &
        Prop.~\ref{p:EMIntegralError} \\
		\hline
	\end{tabular}
\label{tab:Scalings}
\end{table}

\begin{theorem}
    \label{t:PointwiseEvaluation}
    Given $\varepsilon > 0$ there exist (finite) complex sequences $c_m, c_m^\prime$ and $z_m,
    z_m^\prime$, natural numbers $N,L,M,M^\prime = O(\log \varepsilon)$ and a constant $C_A$ such that 
    for every $\phi \in \Hd[2,2]{r_A}$, every $(x,y) \in [-1,1]^2$ and $s > \frac12$,	
    \begin{multline} 
       \Bigg| (\mathcal{A}_s \phi)(x,y) - \sum_{\pm}\left( \sum_{n=0}^{N-1}
    J_n^\pm(x,y)^s \phi(G_n^\pm(x,y)) + \tfrac{1}{2}J_N^\pm(x,y)^s
    \phi(G_N^\pm(x,y))\right.  \\ 
    + \frac{1}M \sum_{m=1}^M c_m J_{z_m}^\pm(x,y)^s \phi(G_{z_m}^\pm(x,y))
   \left. 
   + \frac{1}{ M^\prime} \sum_{m=0}^{ M^\prime-1}{c_m^\prime} \left(n^2 J_{
  z_m^\prime}^\pm(x,y)\right)^s \phi(G_{z_m^\prime}^\pm(x,y)) \right) \Bigg|  \\ 
  \qquad\qquad\qquad\qquad  \leq \| \phi \|_{\Hd[2,2]{r_A}}C_{\!A}^s\cdot \varepsilon. 
    \label{eq:Theorem2}
	\end{multline}
\end{theorem}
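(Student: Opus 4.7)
The plan is to apply the Euler--Maclaurin machinery of Propositions~\ref{p:EMRemainderBound}, \ref{p:EMDerivativeError}, and \ref{p:EMIntegralError} separately to each of the two summands of~$\mathcal A_s$. Fix $(x,y) \in [-1,1]^2$ and a sign $\pm$, and set
\[
\psi^\pm(n) := J_n^\pm(x,y)^s\, \varphi(G_n^\pm(x,y)),
\qquad
(\mathcal A_s \varphi)(x,y) = \sum_{\pm}\sum_{n=0}^\infty \psi^\pm(n).
\]
For each sign, I would invoke the Euler--Maclaurin identity~\eqref{eq:EMFormula} with parameters $N,L$ chosen as in Table~\ref{tab:Scalings}, then replace $I_{\psi^\pm}(N)$ and $D_{\psi^\pm}(L,N)$ by the Fourier quadratures given in~\eqref{eq:zmpdef}--\eqref{eq:cmpdef} and~\eqref{eq:zmdef}--\eqref{eq:cmdef}, respectively. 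The sum over $\pm$ of the five explicit terms recovers exactly the quadrature formula on the left of~\eqref{eq:Theorem2}.

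Next I would verify the hypotheses of the three propositions for $\psi^\pm$. By Proposition~\ref{p:APrioriConstants}\ref{res:Large}, whenever $|n|\ge \nu_A = 10$ we have $G_n^\pm([-1,1]^2) \subseteq \Bd[2,2]{r_A}$ and $|J_n^\pm(x,y)| \le 6.8/|n|^2$, so
\[
|\psi^\pm(n)| \le (6.8/|n|^2)^s \,\|\varphi\|_{\Hd[2,2]{r_A}} \le C_{\!A}^{\,s}\,\|\varphi\|_{\Hd[2,2]{r_A}}\qquad\text{with } C_{\!A}:=6.8/\nu_A^{2},
\]
uniformly for $\Re n \ge \nu_A$; Proposition~\ref{p:AnalyticAtInfinity} provides the required analyticity of $\psi^\pm$ on this half-plane, so the hypotheses of Propositions~\ref{p:EMRemainderBound} and \ref{p:EMDerivativeError} hold with $\nu = \nu_A$ and $C = C_{\!A}^{\,s}\|\varphi\|_{\Hd[2,2]{r_A}}$. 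For Proposition~\ref{p:EMIntegralError}, the same two results give that $n^{2s}\psi^\pm(n) = n^{2s}J_n^{\pm\,s}\varphi(G_n^\pm)$ is analytic and bounded in $n$ at infinity, i.e., there exists $\phi$ analytic on $\{|z|<1/\nu_A\}$ with $\psi^\pm(n) = n^{-2s}\phi(1/n)$ and $|\phi|\le C_{\!A}^{\,s}\|\varphi\|_{\Hd[2,2]{r_A}}$, which is exactly the required factorisation.

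With the hypotheses in place, the total error is the sum over both signs of the three quantities from~\eqref{eq:Rtop}, \eqref{eq:EMDerr}, and~\eqref{eq:EMIntegralErrorBound}, each carrying the common factor $C = C_{\!A}^{\,s}\|\varphi\|_{\Hd[2,2]{r_A}}$. Applying the scalings of Table~\ref{tab:Scalings}, namely $N \gtrsim \nu_A - \tfrac{\log\varepsilon}{2\pi}$, $L \sim \pi(N-\nu_A)+1$, $M \gtrsim -\log\varepsilon$, $M' \gtrsim -\log\varepsilon/\log(N/\nu_A)$, makes each of the three bounds at most $\varepsilon/6$ in units of $C$, so that the combined error over both signs is at most $C_{\!A}^{\,s}\|\varphi\|_{\Hd[2,2]{r_A}}\cdot\varepsilon$, as required. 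The only real technical care lies in checking that the ideal scalings in the table satisfy simultaneously the admissibility condition $2L-1 < 2e\pi(N-\nu_A)$ of Proposition~\ref{p:EMDerivativeError} and the decay conditions for the other two error bounds; this is straightforward but must be done in order to justify that all parameters can be taken of size $O(\log\varepsilon^{-1})$. The bookkeeping over the two signs is routine and simply doubles the constant, which is absorbed into the $\varepsilon/6$ budget.
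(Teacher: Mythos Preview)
Your proposal is correct and follows essentially the same route as the paper, which combines the two signs into a single $\psi(n)=\psi^+(n)+\psi^-(n)$ rather than treating them separately (a purely cosmetic difference). Two small points to tighten: the bound on $\phi$ in the hypothesis of Proposition~\ref{p:EMIntegralError} should be $6.8^s\|\varphi\|_{\Hd[2,2]{r_A}} = C_{\!A}^{\,s}\nu_A^{2s}\|\varphi\|_{\Hd[2,2]{r_A}}$ rather than $C_{\!A}^{\,s}\|\varphi\|_{\Hd[2,2]{r_A}}$ (harmless, since the extra factor $\nu_A^{2s}$ is absorbed in the choice of~$M'$), and analyticity of $\psi^\pm$ on the full region $|n|\ge\nu_A$ requires, beyond Proposition~\ref{p:AnalyticAtInfinity}, the observation that $G_n^\pm$ and $J_n^\pm$ are rational in~$n$ and hence analytic wherever they are bounded.
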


\begin{proof}
Our aim is to apply the Euler--Maclaurin formula together with our estimates
    on correction terms to the Apollonian transfer operator~\eqref{eq:ApTrOp}, which we recall here: 
	\begin{equation*} 
        (\mathcal A_s \varphi)(x,y) = \sum_{n=0}^\infty J^+_n(x,y)^s \varphi(G^+_n(x,y))
        +  J^-_n(x,y)^s \varphi(G^-_n(x,y))
    \end{equation*} 
    To this end, let~$(x,y)$ be fixed. We introduce
        $$
        \psi(n) := J_n^+(x,y)^s \varphi(G_n^+(x,y)) + J_n^-(x,y)^s
        \varphi(G_n^-(x,y)).
        $$
    Let us also introduce the parameters $N, L, M$, and $M^\prime$, to be fixed later, but so that $2L-1 < 2e\pi
    (N-\nu_A)$ and as prescribed by Table~\ref{tab:Scalings}.
    Then the Euler--Maclaurin formula~\eqref{eq:EMFormula} gives 
    $$
    (\mathcal A_s \varphi)(x,y) = \sum_{n=0}^N \psi(n) + \frac12 \psi(N) +
    I_\psi(N) + D_\psi(L,N) + R_\psi(L,N). 
    $$
	We want to apply the results from Sections
    \ref{ss:EMRemainder}--\ref{ss:EMIntegral} to functions $\psi$ and
	\begin{align*} 
	\widetilde \psi(n^{-1}) &:= n^{2s} \psi(n) = \left(n^2 J_n^+(x,y)\right)^s
    \varphi(G_n^+(x,y)) + \left(n^2 J_n^-(x,y)\right)^s \varphi(G_n^-(x,y))
      \end{align*} 
	It follows from Proposition~\ref{p:APrioriConstants}\ref{res:Large} that for all complex $|n| > \nu_A$,
	\[ | \widetilde \psi(n^{-1}) | \leq 2 \cdot 6.8^s \| \varphi
    \|_{\Hd[2,2]{r_A}} \]
	and hence introducing $C_A: = 6.8/\nu_A^{2}$, we conclude
	\[ | \psi(n) | = |\widetilde \psi(n^{-1}) n^{-2s}| \leq  2 \nu_A^{-2s} \cdot
    6.8^s = 2 \| \varphi \|_{\Hd[2,2]{r_A}} C_A^s.\] 

    Next, we need to show that $\widetilde\psi(n^{-1})$ is analytic for $|n| > \nu_A$.
    Proposition~\ref{p:APrioriConstants}\ref{res:Large} gives that
    $G^\pm_n(x,y)$ belongs to 
    the domain of analyticity of $\phi$ for all $|n| > \nu_A$, and
    Proposition~\ref{p:AnalyticAtInfinity} gives that $(n^{-2} J_n^\pm(x,y))^s,
    G^\pm_n(x,y)$ are analytic at $n=\infty$. Hence, it only remains to show that
    $n \mapsto G^\pm_n(x,y), J_n^\pm(x,y)$ are analytic for $|n| > \nu_A$.
    Since the two functions are rational maps of $n$, they
    are analytic whenever they are bounded, and the latter is already
    established above. 
    
    Now, we can apply Propositions \ref{p:EMRemainderBound},
    \ref{p:EMDerivativeError}, \ref{p:EMIntegralError} and collect
    approximation errors together. 
    Applying Proposition~\ref{p:EMRemainderBound} first to estimate the
    remainder term~$R_\psi(L,N)$
    with $\nu = \nu_A$ and $C = C_A $, we get 
	\[R_\psi(L,N) \leq     \frac{ 2 C_A^s (2L+1)!}{L (2\pi)^{2L+1}(N-\nu_A)^{2L}}\| \varphi \|_{\Hd[2,2]{r_A}} . \] 
    Then we choose~$z_m$ and $c_m \in \C$ as specified by~\eqref{eq:zmdef}
    and~\eqref{eq:cmdef} respectively in Proposition~\ref{p:EMDerivativeError}
    and estimate the difference 
    $$
    \left| D(L,N) - \frac{1}M \sum_{m=1}^M c_m J_{z_m}^\pm(x,y)^s
    \varphi(G_{z_m}^\pm(x,y)) \right| \leq  \frac{\pi^2 e}{6}
    \frac{N-\nu_A}{\left(\tfrac{2\pi e (N-\nu_A)}{2L-1}\right)^2 - 1} \frac{
    2 C_A^s}{e^M - 1}\| \varphi \|_{\Hd[2,2]{r_A}} . 
    $$
	Finally, with $c_m^\prime$, and $z_m^\prime$ as given by~\eqref{eq:cmpdef}
    and~\eqref{eq:zmpdef} respectively we use
    Proposition~\ref{p:EMIntegralError} to bound the error 
  $$
  \left| I_\psi(N) - \frac{1}{M^\prime} \sum_{m=0}^{ M^\prime-1} c_m^\prime \left(n^2
  J_{z_m^\prime}^\pm(x,y)\right)^s \varphi(G_{z_m^\prime}^\pm(x,y)) \right|
  \leq \frac{2C_A^s \nu_A^{2s} N^{1-2s}}{(2s-1)(1 -
         \tfrac{\nu_A}{N})} \frac{1}{(\tfrac{N}{\nu_A})^{M^\prime} - 1}
         \| \varphi \|_{\Hd[2,2]{r_A}} .
 $$
Choosing $M,M',L,N$ just large enough with the appropriate relationships between
each other (see Table \ref{tab:Scalings}) ensures that the sum of the right hand
sides is no bigger than~$\varepsilon$.
\end{proof}

\begin{remark}
Note that because the $c_m, z_m, c_m^\prime$, and $z_m^\prime$ are independent of
$\varphi$ and $(x,y)$ they can be calculated once and then be reused over and over
for different choices of function $\varphi \in \mathcal H^{2,2}_{r_A}$ and evaluation
points.
\end{remark}

\subsection{Global estimates}\label{ss:globalestimates}

The following proposition allows us to apply Theorem~\ref{t:Cheby} to estimate
the action of our transfer operator $\mathcal{A}_s$ using Chebyshev--Lagrange
approximation. Recall the error function $E_{2,n}(x) =   16
e^{-(K-1)x}\cdot\frac{1+Kx}{x^{2}}$ defined earlier 
by~\eqref{eq:erFeps}. 
\begin{proposition}\label{p:APrioriConsequences}
	Let $s \in [1.30,1.31]$. Then for every $\psi \in \Hd[2,2]{r_A}$, 
    \[ \| (\mathcal A_s - \P_n \mathcal A_s) \psi \|_{L^\infty} \leq W_A
    \cdot E_{2,n}(R_A) \cdot \| \psi \|_{\Hd[2,2]{r_A}} \]
	where the map-dependent constant $W_A = 3$.
\end{proposition}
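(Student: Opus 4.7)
The plan is to apply Theorem~\ref{t:Cheby} directly to the Apollonian transfer operator~$\mathcal{A}_s$, viewed as an instance of the weighted composition operator~\eqref{eq:tropgen} with index set $\{(n,\pm) : n \in \N\}$, weights $w_{n,\pm}(z) = J_n^\pm(z)^s$, and composition maps $v_{n,\pm} = G_n^\pm$. First I would note that because $\Bd[2,2]{0} = [-1,1]^2$, the $L^\infty$ norm on the cube is exactly $\|\cdot\|_{\Hd[2,2]{0}}$. Taking $d = 2$, $p = 2$, $r = r_A$, $R = R_A$, and $\rho = 0$ in Theorem~\ref{t:Cheby} gives $R - \rho = R_A$ inside $E_{d,K}$, matching the stated error factor $E_{2,n}(R_A)$.

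The two hypotheses of Theorem~\ref{t:Cheby} are straightforward to check. The inclusion $G_n^\pm(\Bd[2,2]{R_A}) \subseteq \Bd[2,2]{r_A}$ is exactly Proposition~\ref{p:APrioriConstants}\ref{res:CLBernsteinInclusion}, and analyticity of the weights $J_n^{\pm s}$ on $\Bd[2,2]{R_A}$ follows from the fact that $J_n^\pm$ is a nonvanishing derivative of a M\"obius transformation, with the positive branch of $(\cdot)^s$ fixed as in~\eqref{eq:RmapsJac}. Thus it suffices to find a finite constant $W_A$ with
\[ \sum_{n \in \N,\,\pm} \bigl\| J_n^\pm \bigr\|_{\Hd[2,2]{R_A}}^s \leq W_A \qquad \text{for all } s \in [1.30,1.31], \]
and to verify that $W_A = 3$ works.

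For this final estimate I would invoke the pointwise bound $|J_n^\pm(z_1,z_2)| \leq \max\{\tfrac{3}{4(n+1)}, \tfrac{36}{n^2}\}$ on $\Bd[2,2]{R_A}$ from Proposition~\ref{p:APrioriConstants}\ref{res:CLJacobianBound}, passing to the supremum over the ellipse and then raising to the power~$s$. Since these bounds are smaller than~$1$ for all but finitely many~$n$ and $x \mapsto x^s$ is decreasing in~$s$ on $(0,1)$, the worst case over $s \in [1.30,1.31]$ is $s = 1.30$ on the tail (the initial few terms where the bound exceeds~$1$ being handled by taking the maximum over the two endpoint values of~$s$). I would then split the series at the transition index $n_\ast \approx 49$ where the two branches of the maximum agree: for $n \geq n_\ast$ the tail is controlled by the convergent series $\sum (3/(4(n+1)))^s$ with $s > 1$; for $n < n_\ast$ the sum is finite and, together with the exceptional small-$n$ terms where $36/n^2$ is very loose, must be evaluated explicitly.

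The main obstacle is the numerical verification that the resulting constant does not exceed~$3$. The contribution of the small-$n$ Jacobians on $\Bd[2,2]{R_A}$, where the ellipse contains points comparatively close to the poles of~$f_n^\pm$ and the universal bound $36/n^2$ is far from sharp, is where the estimate is tightest; these finitely many individual terms should be computed using rigorous interval-arithmetic enclosures of $\sup_{\Bd[2,2]{R_A}} |J_n^\pm|$ exactly as in the computer-assisted proof of Proposition~\ref{p:APrioriConstants} itself, and the value $W_A = 3$ emerges from combining this numerical bookkeeping with the analytical tail bound.
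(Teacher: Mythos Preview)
Your approach is correct and matches the paper's: apply Theorem~\ref{t:Cheby} with $d=p=2$, $R=R_A$, $r=r_A$, $\rho=0$ (so that $\|\cdot\|_{\Hd[2,2]{0}}=\|\cdot\|_{L^\infty}$), check the mapping hypothesis via Proposition~\ref{p:APrioriConstants}\ref{res:CLBernsteinInclusion}, and bound $W=\sum_{n,\pm}\|J_n^\pm\|^s_{\Hd[2,2]{R_A}}$ using Proposition~\ref{p:APrioriConstants}\ref{res:CLJacobianBound}.

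The one place you diverge is the final numerical step. You take the Jacobian estimate literally as a $\max$, which for small~$n$ leaves only the vacuous bound $|J_n^\pm|\le 36/n^2>1$ and forces you to propose fresh interval-arithmetic enclosures for those terms. The paper's proof instead uses that \emph{both} bounds in Proposition~\ref{p:APrioriConstants}\ref{res:CLJacobianBound} hold (the $\max$ there is evidently to be read as a $\min$), giving
\[
\sum_{n,\pm}\|J_n^\pm\|^s_{\Hd[2,2]{R_A}}\le 2\sum_{n\ge 1}\min\Bigl\{\tfrac{3^s}{4^s(n+1)^s},\ \tfrac{36^s}{n^{2s}}\Bigr\},
\]
and then checks $2\sum_{n\ge 1}\min\{(0.75)^{1.31}(n+1)^{-1.30},\,36^{1.31}n^{-2.60}\}\le 3$ by a direct integral comparison. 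No computer assistance is required beyond what already went into Proposition~\ref{p:APrioriConstants}; your route would also succeed but duplicates that work.
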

\begin{proof}
	We have from Proposition~\ref{p:APrioriConstants}\ref{res:CLJacobianBound}
    \[ \sup_{(x,y) \in \mathcal E^{2,2}_{r_A}} \sum_{n\in \N, \pm}
    |J_n^\pm(x,y)|^s \leq 2 \sum_{n\in \N} \min \Bigl\{
    \frac{3^s}{4^s(n+1)^s}, \frac{36^s}{n^{2s}} \Bigr\}. \] 
	This function is decreasing so has an integral bound
	\[ 2\sum_{n=1}^\infty \min \left\{ (0.75)^{1.31} (n+1)^{-1.30}, 36^{1.31}
    n^{-2\cdot 1.30} \right\} \leq 3 =: W_A. \]
	
	Together with Proposition~\ref{p:AnalyticAtInfinity} we conclude  
    that $\mathcal A_s$ satisfies the assumptions of Theorem~\ref{t:Cheby}
    for $R=R_A, r=r_A, \rho = 0$, and the result follows.
\end{proof}
	It only remains to show that the Apollonian transfer operator satisfies
    hypotheses of Theorem~\ref{t:PositiveOperator2}. Namely, we need to show
    that for some $D^\pm$ the double inequality~\eqref{eq:LinearResponse}
    holds true for $\L_s = \mathcal A_s$. It bounds the pointwise response 
    of the action of $\mathcal A_s$ to changing $s$. 
    We shall do it by partitioning $[-1,1]^2 \times \{1,2,\ldots,30\}$ 
    and employing interval arithmetic and applying integral-based estimate for
    larger~$n$. 

\begin{proposition}\label{p:APrioriDPlusMinus}
	Let $s \in [1.30,1.31]$.
	If $D^+_A = 0.59$, $D^-_A = 3.3$, then for every $\psi>0$ and every $(x,y) \in
    [-1,1]^2$,
\[ \bigl[\tfrac{\d}{\d s} \mathcal A_s \psi\bigr] (x,y)= \sum_{n\in \N, \pm}
\log |J_n^\pm(x,y)|\cdot |J_n^\pm(x,y)|^s \psi(G_n^\pm(x,y)) \in [-D_A^-\sup \psi,
-D_A^+ \inf \psi]. \] 

\end{proposition}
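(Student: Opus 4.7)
The plan is to reduce the claim to a two-sided uniform estimate on the scalar function
\[ S_s(x,y) := \sum_{n \in \N,\, \pm} \log |J_n^\pm(x,y)| \cdot |J_n^\pm(x,y)|^s, \]
which sweeps out the $\psi$-independent content of $\tfrac{\d}{\d s} \mathcal A_s \psi$. Since each $|J_n^\pm(x,y)| < 1$ on $[-1,1]^2$ (by the contraction statement in Proposition~\ref{p:APrioriConstants}\ref{res:CLJacobianBound}), every summand in $S_s(x,y)$ is negative. Consequently, for any $\psi > 0$ and any $(x,y)\in[-1,1]^2$,
\[ S_s(x,y) \cdot \sup \psi \;\leq\; \bigl[\tfrac{\d}{\d s} \mathcal A_s \psi\bigr](x,y) \;\leq\; S_s(x,y) \cdot \inf \psi. \]
Thus it suffices to show that $-D_A^- \leq S_s(x,y) \leq -D_A^+$ uniformly in $(x,y)\in[-1,1]^2$ and $s\in[1.30,1.31]$.

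I would split $S_s = S_s^{\,<N_0} + S_s^{\,\geq N_0}$ at some moderate threshold (say $N_0 = 30$, which is well above the $\nu_A = 10$ of Proposition~\ref{p:APrioriConstants}\ref{res:Large}). The tail is handled analytically: from $|J_n^\pm(x,y)| \leq 6.8/n^2$ for $n \geq \nu_A$, and the fact that $t \mapsto t^s|\log t|$ is monotone increasing on $(0, e^{-1/s})$ with $6.8/n^2 < e^{-1/s}$ safely holding for $n \geq N_0$ and $s\in[1.30,1.31]$, one obtains
\[ 0 \leq -S_s^{\,\geq N_0}(x,y) \leq 2 \sum_{n \geq N_0} \bigl(6.8/n^2\bigr)^{s} \bigl(2\log n - \log 6.8\bigr), \]
which is a convergent series whose value can be bounded rigorously above (say by a few percent of the tolerance budget) by combining explicit summation of the first few terms with a crude integral overestimate for the remainder. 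The factor of $2$ accounts for the $\pm$ branches.

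For the finite head $S_s^{\,<N_0}$, I would borrow verbatim the computer-assisted strategy used in the proof of Proposition~\ref{p:APrioriConstants}: partition the square $[-1,1]^2$ into a grid of rectangles, compute each $\log|J_n^\pm| \cdot |J_n^\pm|^s$ in validated interval arithmetic on each rectangle for $n = 1,\ldots, N_0-1$ and both signs, and for each fixed rectangle take $s$ to range over the interval $[1.30,1.31]$. Summing the per-rectangle enclosures yields an interval containing $S_s^{\,<N_0}$ uniformly over $(x,y)$ in that rectangle. Together with the tail estimate, this produces rigorous upper and lower bounds on $S_s(x,y)$, and one only needs that the grid is fine enough and $N_0$ large enough so that the resulting enclosure fits inside $[-3.3,\,-0.59]$.

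The main obstacle is the usual one for this kind of computer-assisted proof: choosing the grid refinement and threshold $N_0$ so that interval overestimation (dominated by wrapping effects in the rational expressions for $J_n^\pm$) does not inflate the enclosure past the tolerances $0.59$ and $3.3$. Since the true infimum and supremum of $S_s$ appear (from the non-rigorous numerical experiments underlying the choice of $D_A^\pm$) to be comfortably separated from these two numbers, the margin should absorb moderate overestimation, and a straightforward uniform partition with a few tens of subdivisions per axis, combined with $N_0$ of order $30$, ought to suffice. No new analytic ideas are required beyond those already used in Proposition~\ref{p:APrioriConstants}.
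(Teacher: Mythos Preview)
Your proposal is correct and follows essentially the same approach as the paper: reduce to a two-sided bound on the scalar sum $S_s(x,y)$, split at $N_0=30$, handle the head by interval arithmetic on a box partition of $[-1,1]^2$, and control the tail by analytic bounds on $n^2 J_n^\pm$ combined with an integral estimate. The only cosmetic difference is that the paper obtains two-sided bounds on $n^2 J_n^\pm$ for the tail, whereas you use only the upper bound $6.8$ together with monotonicity of $t\mapsto t^s|\log t|$; either suffices since the tail contribution is small.
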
 
\begin{proof}
	Since 
	\[ \bigl[\tfrac{\d}{\d s} \mathcal A_s \psi \bigr](x,y) = \sum_{n\in \N,
    \pm} \log |J_n^\pm(x,y)|\cdot |J_n^\pm(x,y)|^s \psi(G_n^\pm(x,y)) \] 
	This means we need to find $D^\pm_A$ such that 
	\[ \sum_{n\in \N, \pm} \log |J_n^\pm(x,y)| \cdot |J_n^\pm(x,y)|^s \in [-D^-_A, -D^+_A]. \]
	We achieve this for $n \le 30$ by dividing $[-1,1]^2$ into boxes
    and applying interval bounds. For the remainder, we obtain uniform upper and
    lower bounds on $n^{-2} J_n^{\pm}(x,y)$ for $n \geq 30$ and then obtain an
    upper bound on the sum via the integral. This is done in the Jupyter notebook {\tt APriori.ipynb}. 
\end{proof}

\section{Computer-assisted proof of the main theorem}
\label{s:algorithm}

Let us now describe the algorithm for the computer program used to obtain the bounds on the Apollonian gasket's Hausdorff
dimension that we claim in Theorem~\ref{t:main}. It has two stages: a non-rigorous guess of the
dimension is found, and this guess is then rigorously validated. We begin by choosing a desired
precision~$\varepsilon$ and computing parameters $K, L, N, M,
M^\prime$ as specified in Table~\ref{tab:Scalings}.

The first step is an iterative search of a parameter value~$s_*$
such that the maximal eigenvalue of $\mathcal{P}_K \mathcal A_{s_*}$ is
sufficiently close to~$1$.  

The search is initialised with two guesses $s_1, s_2$ close to $\dim_H \apack$. For each guess, we evaluate the action of our transfer
operator~$\mathcal A_{s_1,s_2}$ on the Chebyshev--Lagrange basis at the Chebyshev nodes. In other words, for $s_1,s_2$ in turn
we construct a (complete) matrix representation of $\mathcal{P}_K \mathcal
A_{s_1,s_2}$ in the
 Chebyshev--Lagrange basis. We then compute the leading eigenvalues of this matrix as specified in
 Algorithm~\ref{alg:nonrigeigval}. By Theorem~\ref{t:Cheby} we know that our
 matrix representation is a good 
approximation of the true transfer operator, and so its leading eigenvalue will be a
reasonable approximation of the true leading eigenvalue of~$\mathcal A_s$. 

The search proceeds with the secant method applied to the function from
Algorithm~\ref{alg:nonrigeigval} as specified in Algorithm~\ref{alg:nonrig}. The
aim is to find a value of~$s$ for which the leading eigenvalue is sufficiently
close to~$1$. We do not need to explicitly estimate any errors here, and so
work in extended floating point arithmetic (such that the machine precision is around $\epsilon$). Then both the value~$s_*$ and the corresponding
eigenvector, the polynomial~$\varphi_*$, represented by its values at the Chebyshev nodes are
passed to the second step for verification. 

The second step is a rigorous verification using the min-max method. Namely, we apply the
approximating operator~$\P_K\mathcal A_s$ to the guessed eigenvector~$\varphi_*$, keeping
track of all errors via interval arithmetic. In order to invoke
Theorem~\ref{t:PositiveOperator2}, we only need to obtain bounds on the discrepancy $\varepsilon^-<|\P_K\mathcal
A_s\varphi_*-\varphi_*|<\varepsilon^+$, and to compute necessary constants using
Propositions~\ref{p:APrioriConsequences} and~\ref{p:APrioriDPlusMinus} with
$\psi = \varphi_*$ and $s = s_*$. The verification step is realised in Algorithm~\ref{alg:rig}.    

After the preliminary discussion, we are ready to justify the accuracy of the
dimension value claimed. 


\begin{proof}[Proof of Theorem~\ref{t:main}]
	The bounds on the dimension~$s_-\le \dim_H \apack \le s_+ $ is the output of
    Algorithm~\ref{alg:rig}, with all computations done
    using the interval arithmetic so that
    any round-off errors are taken into account at all times. 
 
    Let $s_*$ and $\varphi_*$ be an educated guess of the dimension value and
    a polynomial eigenfunction, represented by its values at the Chebyshev nodes.
	Theorem~\ref{t:PointwiseEvaluation} allows us to evaluate $(\mathcal
    A_{s_*} \varphi_*)(x,y)$ for given values $x,y$ with arbitrary precision using the
    Euler--Maclaurin formula.  
    Hence we can calculate the polynomial $\P_K \mathcal A_{s_*} \varphi_*$
    numerically using evaluations of $\mathcal A_{s_*} \varphi_*$ at
    Chebyshev--Lagrange nodes. 
    
    Then Proposition~\ref{p:APrioriConsequences} allows us
    to obtain an estimate on $\|\varphi_*\|_{\Hd[2,2]{r_A}}$ and thus bound the
    norm of the difference $\|\P_n \mathcal A_{s_*} \varphi_* -
    \mathcal A_{s_*} \varphi_*\|_{L^\infty }$
    in terms of~$\|\varphi_*\|_{\Hd[2,2]{r_A}}$.
	At the same time, using Proposition~\ref{p:APrioriDPlusMinus} we compute
    necessary constants in order to apply
    Theorem~\ref{t:PositiveOperator2} to operator $\mathcal A_{s_*}$. This gives us an interval $[s_+, s_-] \subseteq [1.30,1.31]$
    containing an~$s$ such that $\rho(\mathcal A_s) = 1$.
    Proposition~\ref{p:SpecRadIsDimension} implies that this value of $s$ is
    unique, and is the Hausdorff dimension of the Apollonian circle packing. 
\end{proof}

\subsection{Computational expense}

We will briefly comment on the computational power used by this algorithm. To
evaluate $\mathcal A_s \psi$ at a given point takes around $O(N + L + M +
M^\prime) = O(\log \varepsilon^{-1})$ evaluations of $\psi$ and other
functions. Given that $K = O(\log \varepsilon^{-1})$, the construction of
a $K^2 \times K^2$ transfer operator in the nonrigorous step therefore takes
approximately $O((\log \varepsilon^{-1})^5)$ evaluations; so too does the
evaluation of $\mathcal A_s$ applied to a sum of $K^2$ Chebyshev--Lagrange polynomials at $K^2$
points in the rigorous step. The secant method requires $O(\log \log
\varepsilon^{-1})$ steps to converge. The time taken to perform arithmetic
operations using {\tt BigFloat}s (the default extended-precision floating point
arithmetic in Julia) appears empirically to be around $O((\log
\varepsilon^{-1})^{1.5})$ for precision $\varepsilon$. 

This suggests that the computational power required to obtain a rigorous
Hausdorff dimension estimate of precision $\varepsilon$ is\footnote{Here and
below by $o_\varepsilon(1)$ we understand a quantity that $\to 0$ as $\varepsilon
\to 0$.}  $O((\log
\varepsilon^{-1})^{6.5+o_\varepsilon(1)})$: which is to say, doubling the number of
digits in the estimate requires around $100$ times more computing power. Our
algorithms are of course extremely parallelisable. 

We note that we were able to make some efficiency gains on this by using other
structure of the problem. For example, the transfer operator preserves $y$-even
functions, implying by positivity that the leading eigenfunction is also
$y$-even: as a result, we restricted our discretisation to $y$-even
polynomials, which provided a fourfold saving in computational expense. 

In our implementation we ran our algorithm with $\varepsilon = 2^{-447}
\approx 3 \times 10^{-135}$ on $30$ cores of a research server. This took around
90 hours in total. 
The programming code is written in Julia and is available 
at~\url{https://github.com/wormell/apollonian}.  


Below we provide pseudocode for the key functions of the implementation.

\bigskip

\begin{algorithm}
	\caption{Non-rigorous estimate of the leading eigenvalue and eigenvector
    of~$\mathcal A_s$ for given~$s$.}\label{alg:nonrigeigval}
	
	\begin{algorithmic}[1]

		\Function{NonRigorousEigenvalue}{$s$, $K$, $N$, $L$, $M$, $M^\prime$}
		
        \Require{$s \in [1.30,1.31]$: the current estimate for the dimension;
                $K$, $N$, $L$, $M$, $M^\prime$ as in Table~\ref{tab:Scalings}}
	
        \Let{$x_{\vec j,K}$}{Chebyshev nodes~\eqref{eq:ChebyshevNodes}} 
        \Let{$\ell_{\vec k,K}$}{Chebyshev--Lagrange polynomials~\eqref{eq:LagrangePoly}}

		\For{$\vec k \in \{1,2,3,\ldots, K\}^2$}
				\For{$\vec j \in \{1,2,3,\ldots, K\}^2$}
		\Let{TransferMatrix$_{\vec j, \vec  k}$}
        {\Call{PointwiseEstimate}}{$s, x_{\vec j,K}, \mathcal A_s\ell_{\vec k,K},N,L,M, M^\prime$} 
				
		\Comment{Approximation to $(\mathcal A_s \ell_{\vec k,K})(x_{\vec j,K})$
         according to Theorem~\ref{t:PointwiseEvaluation} }
		\EndFor
		\EndFor 
		
		\Let{$\lambda, \vec \varphi$}{\Call{ComputeLeadingEigs}{TransferMatrix}}
        

        \Let{$\Phi$} {Evaluate eigenfunction reconstructed from $\vec \varphi$
        at $K \times K$ Chebyshev nodes}

        \State\Return{$\Phi$, $\lambda$}
		\EndFunction
	\end{algorithmic}
\end{algorithm}
\begin{algorithm}
\caption{Non-rigorous estimate of the spectral radius and accompanying eigenfunction.}\label{alg:nonrig}
\begin{algorithmic}[1]
		\Function{NonRigorousDimensionEstimate}{$\varepsilon,K,N,L,M,\tilde M$}
		
		\Require{
			$\varepsilon<10^{-10}$: the desired precision, $K,N,L,M, M^\prime$
             per Table~\ref{tab:Scalings}.}

		\Let{$s_0$}{$1.30$} 
		\Let{$\lambda_0$, $\Phi_0$}{\Call{NonRigorousEigenvalue}{$s_0$, $K$, $N$, $L$, $M$, $M^\prime$}}
		\Let{$s_1$}{$1.31$} 
		\Let{$\lambda_1$, $\Phi_1$}{\Call{NonRigorousEigenvalue}{$s_1$, $K$, $N$, $L$, $M$, $M^\prime$}}
		\Let{$t$}{$1$}
		
        \algcommentleft{The secant method is used to find~$s$ corresponding to
        $\lambda = 1$; }

        \algcommentleft{the rate of convergence is $O(e^{-c t^\alpha})$,
        where~$\alpha$ is the golden ratio.} 

        \While{$|\lambda_{t-1} - 1|\cdot |\lambda_{t-2}-1| > \varepsilon$}
				\Let{$t$}{$t+1$}
		\Let{$s_t$}{$s_{t-1} - (\lambda_{t-1}-1)\tfrac{s_{t-1}-s_{t-2}}{\lambda_{t-1}-\lambda_{t-2}}$}
		\Let{$\lambda_t$,  $\Phi_t$}{\Call{NonRigorousEigenvalue}{$s_t$, $K$,
        $N$, $L$, $M$, $M^\prime$}}
		\EndWhile
		
		\State\Return{$s_t$, $\Phi_t$}
		\EndFunction
	\end{algorithmic}
\end{algorithm}
\begin{algorithm}
	\caption{Rigorous estimate of the dimension of the Apollonian gasket. }\label{alg:rig} 
			\begin{algorithmic}[1]
				\Function{RigorousDimensionBound}{$\varepsilon$}
		
		\Require{$\varepsilon$ -- the desired accuracy of the estimate.}
		
		\State \Call{SetFloatingPointPrecision}{$\varepsilon$}

		\Let{$K$}{$2 \lceil -\tfrac{\log \varepsilon}{2R_A} \rceil$}
        \Comment{$R_A = 1.4$, as specified in \S\ref{s:aprioribdd}}
		\Let{$N$}{$\lceil \nu_A - \tfrac{ \log \varepsilon}{2\pi}\rceil$}
        \Comment{$\nu_A = 10$, as specified in \S\ref{s:aprioribdd}}
		\Let{$L$}{$\lfloor \tfrac{2(N-\nu_A) - 1}{2}\rfloor$}
		\Let{$M$}{$2L$} 
		\Let{$M^\prime$}{$2\lceil-\tfrac{\log\varepsilon}{2\log (N/\nu_A)}\rceil$}
		\Let{$s$,
        $\Phi$}{\Call{NonRigorousDimensionEstimate}{$\varepsilon$, $K$, $N$, $L$, $M$, $M^\prime$}}
	
        \Let{$x_{\vec j,K}$}{Chebyshev nodes \eqref{eq:ChebyshevNodes}} 
        \Let{$\ell_{\vec k,K}$}{Chebyshev--Lagrange polynomials \eqref{eq:LagrangePoly}}

\State \Call{UseIntervalArithmetic}{\phantom{}}

	    \For{$\vec j \in \{1,2,3,\ldots, K\}^2$}
		\Let{$\vec\chi_{\vec j}$}
        {\Call{PointwiseEstimate}}{$s, x_{\vec j,K}, \Phi_{\vec k}\ell_{\vec k,K},N,L,M, M^\prime$} 

		\EndFor
		
		\Let{$\widehat \varphi$}{\Call{DiscreteChebyshevTransform}{$\Phi$}}

        \Comment{coefficients of the eigenfunction in the basis of Chebyshev
        polynomials on~$[-1,1]^2$}
		
        \Let{VNorm}{$\sum_{\vec k} e^{r_A \|\vec k\|_{\ell^2}} |\Phi_{\vec k}|$}
        \Comment{upper bound on $\| \varphi\|_{\Hd[2,2]{r_A}}$ by Corollary~\ref{cor:|phi|H}.}
        \Let{Erp}{\Call{PointwiseEstimateError}{$\varepsilon$, Vnorm, $N$, $L$,
        $M$, $M^\prime$}}

        \Comment{ right hand side of~\eqref{eq:Theorem2}}
        
        \Let{ApproximationError}{$W_A \cdot$VNorm$\cdot E_{2,K}(R_A) +$ Erp}

                
        \Comment{taking in error estimate from
        Proposition~\ref{p:APrioriConsequences}; $W_A = 3$, as specified in
        \S\ref{ss:globalestimates}}

        \Let{$[\varphi^-,\varphi^+]$}{$\widehat \varphi_{0,0} + (\sum_{ \vec k \ne 0}
        |\widehat \varphi_{\vec k}|)\cdot [-1,1]$}
		
        \Let{$\widehat{er}$}{\Call{DiscreteChebyshevTransform}{$\chi-\Phi$}}
        \Let{$[e^-,e^+]$}{$\widehat{er}_{0,0} + (\sum_{\vec k \neq 0}
        |\widehat{er}_{\vec k}|)\cdot [-1,1]$}
		\Let{$s_-$}{$s - (e^- - \textrm{ApproximationError})/{D^-_A\varphi^+}$}
		\Let{$s_+$}{$s - (e^+ + \textrm{ApproximationError})/{D^+_A\varphi^-}$}

        \Comment{constants $D_A^{\pm}$ are given in Proposition \ref{p:APrioriDPlusMinus}}

        \State\Return{$[s_-,s_+]$} 
        \Comment{Interval of length $\sim \varepsilon^{1-o_{\varepsilon}(1)}$
        containing the Hausdorff dimension of the gasket.}
		\EndFunction
	\end{algorithmic}
\end{algorithm}

\clearpage


\section*{Acknowledgements}
The problem was introduced to the first author by prof Mark Pollicott (University of
Warwick) and without his generous support this paper would have never been completed. 

We are grateful to prof Mariusz Urbanski (North Texas), prof Carlangelo Liverani
(Rome Tor Vergata) and Dr Oscar Bandtlow
(QMUL) for pointing us to the results that led to the proof of Proposition~\ref{p:SpecRadIsDimension}. 

The first author was partly supported by EPSRC grant EP/T001674/1 (PI prof Mark
Pollicott). 
The second author was partly supported by the European Research Council (ERC) under the European
Union Horizon 2020 research and innovation programme (grant agreement Nos 787304
and 833802, PI prof Viviane Baladi)
and by Australian Mathematics Society WIMSIG Cheryl E. Praeger Travel Award. 

The authors would like to thank CIRM Luminy (Marseille, France) and the University of
Warwick (Coventry, UK) for their hospitality. 

Last but not least, we would like to thank the School of Mathematics and Statistics at the University of Sydney for providing access to the
research server where the computations were performed.

\bigskip

\centerline{\rule{70pt}{1pt}}

\bigskip

{\sc Caroline Wormell},  School of Mathematics and Statistics,  University of
Sydney, NSW 2006, Australia; \
e-mail: caroline.wormell@sydney.edu.au

{\sc Polina Vytnova}, School of Mathematics and Physics, University of Surrey,
Guildford, Surrey, GU2 7XH, UK; \
e-mail: P.Vytnova@surrey.ac.uk



\begin{thebibliography}{99}  
\bibitem{BF18} Z.-Q.~Bai and S.~R.~Finch, Precise calculation of Hausdorff
    dimension of Apollonian gasket. Fractals 26 (2018), no.~4, 1850050,~9.
\bibitem{BS20} O.~F.~Bandtlow and J.~Slipantschuk, Lagrange approximation of
    transfer operators associated with holomorphic data. arXiv:\,\href{https://arxiv.org/pdf/2004.03534}{2004.03534} (2020). 
\bibitem{BT08} V.~Baladi and M.~Tsujii, Dynamical determinants and spectrum for
    hyperbolic diffeomorphisms. In: K.~Burns, D.~Dolgopyat, Ya.~Pesin (eds.)
    Probabilistic and Geometric Structures in Dynamics, 29--68, Contemp. Math., 469, Amer. Math. Soc., Providence, RI (2008).    
\bibitem{B73} D.~Boyd, The residual set dimension of the Apollonian packing, Mathematika 20 (1973), 170--174.
\bibitem{BJ07} O.~F.~Bandtlow, and O.~Jenkinson. Invariant measures for real
    analytic expanding maps. Journal of the LMS 75(2), 343--368 (2007).
\bibitem{H89}  R.~W.~Hamming, \emph{Numerical Methods for Scientists and
    Engineers.} McGraw-Hill, New York, 1962. 
\bibitem{L40} D.~H.~Lehmer, On the maxima and minima of Bernoulli polynomials. The American Mathematical Monthly 47.8 (1940): 533--538.
\bibitem{MH91} S.~S.~Manna, and H.~J.~Herrmann. Precise determination of the
    fractal dimensions of Apollonian packing and space-filling bearings.
    Journal of Physics A: Mathematical and General 24 (1991): L481. 
\bibitem{MU98} R.~D.~Mauldin and M.~Urbanski, Dimension and measures for a curvilinear Sierpinski
gasket or Apollonian packing. Adv. Math. 136 (1998) 26--38.
\bibitem{MU03} R.~D.~Mauldin and M.~Urbanski, \emph{Graph directed Markov
    systems: geometry and dynamics of limit sets.} Vol. 148. Cambridge
    University Press (2003).
\bibitem{M98} C.~T.~McMullen, Hausdorff dimension and conformal dynamics. III.
    Computation of dimension. Amer. J. Math. 120 (1998) 691--721.
\bibitem{P20} I.~Pinelis, Exact lower and upper bounds on the incomplete gamma function, Math. Inequal.
Appl. 23 (2020), no. 4, 1261--1278, DOI 10.7153/mia-2020-23-95.
\bibitem{PV22} M.~Pollicott and P.~Vytnova, Hausdorff dimension estimates
    applied to Lagrange and Markov spectra, Zaremba theory, and limit sets of
    Fuchsian groups. Transactions of the American Mathematical Society, Series
    B 9, no. 35 (2022): 1102--1159. 
\bibitem{R69} W.~Rudin, \emph{Function theory in polydiscs}. Mathematics lecture
    notes~(41); New York, NY, 1969.
\bibitem{T12} N.~L.~Trefethen, \emph{Approximation Theory and Approximation
    Practice}. SIAM, 2012.
\bibitem{TD94} P.~B.~Thomas and D.~Dhar, The Hausdorff dimension of the
    Apollonian packing of circles. Journal of Physics A: Mathematical and
    General, 27(7) (1994): 2257. 
\bibitem{W19} C.~Wormell, Spectral Galerkin methods for transfer operators in
    uniformly expanding dynamics. Numerische Mathematik 142 (2019): 421--463.
\bibitem{W21} C.~Wormell, Efficient computation of statistical properties of intermittent dynamics. 
    arXiv:\,\href{https://arxiv.org/pdf/2106.01498}{2106.01498} (2021). 

\end{thebibliography}
\end{document}